\newcommand{\Z}{\ensuremath{\mathbf Z}}
\newcommand{\N}{\ensuremath{ \mathbf N }}
\newcommand{\G}{\ensuremath{ \mathbf{G} }}
\newcommand{\X}{\ensuremath{ \mathbf{X} }}
\newtheorem{theorem}{Theorem}
\newcommand{\bt}{\begin{theorem}}
\newcommand{\et}{\end{theorem}}
\newtheorem{lemma}{Lemma}
\newtheorem{corollary}{Corollary}
\newcommand{\bl}{\begin{lemma}}
\newcommand{\el}{\end{lemma}}
\newcommand{\bc}{\begin{corollary}}
\newcommand{\ec}{\end{corollary}}
\newtheorem{problem}{Problem} 
\newcommand{\bprob}{\begin{problem}}
\newcommand{\eprob}{\end{problem}}
\newtheorem{thm}{Theorem}
\newtheorem{conj}{Conjecture}
\newtheorem{pbm}{Problem} 
\newtheorem{defi}{Definition}
\newcommand{\bpf}{\begin{proof}}
\newcommand{\epf}{\end{proof}}
\newcommand{\beq}{\begin{equation}}
\newcommand{\eeq}{\end{equation}}
\newcommand{\benum}{\begin{enumerate}}
\newcommand{\eenum}{\end{enumerate}}
\newcommand{\Prob}{\ensuremath{\text{Prob}}}
\title[Problems in Additive Number Theory, III]
{Problems in Additive Number Theory, III: Thematic Seminars
at the Centre de Recerca Matem\`atica}
\author{Melvyn B. Nathanson}
\thanks{This paper is based on lectures at the Centre de Recerca Matem\`atica in Barcelona on January 23 and January 25, 2008.  I thank Itziar Bardagi and Szabolcs Francsali  for taking notes at these lectures and preparing a preliminary draft of this paper.}
\address{Department of Mathematics, Lehman College (CUNY), 
Bronx, NY 10468, and CUNY Graduate Center, New York, NY 10016}
\email{melvyn.nathanson@lehman.cuny.edu}
\subjclass[2000]{11-02, 11B05, 11B13, 11B34, 11B75, 11A67, 11D04, 11D72, 11D85, 05B45, 05C38.}
\keywords{Sumsets, difference sets, representation functions, MSTD sets, linear forms, asymptotic basis, thin basis, minimal basis, maximal nonbasis, complementing sets, tiling by finite sets, Caccetta-H\" aggkvist conjecture.}
\date{\today}
\begin{document}

\begin{abstract}
This is a survey of open problems in different parts of combinatorial and additive number theory.
\end{abstract}

\maketitle

\section{What sets are sumsets?}
Let $\N, \N_0$, $\Z$, and $\Z^d$ denote, respectively, the sets of positive integers,  nonnegative integers, integers, and $d$-dimensional integral lattice points.  Let $\G$ denote an arbitrary abelian group and let $\X$ denote an arbitrary abelian semigroup, written additively.  
Let $|S|$ denote the cardinality of the set $S$.  
For any sets $A$ and $B$,  we write $A\sim B$ if  their symmetric difference is finite, that is, if
$\left|(A\setminus B)\cup(B\setminus A)\right|<\infty$.

\begin{defi}
Let $A$ and $B$ be nonempty subsets of an additive abelian semigroup $\X$. 
The most important definition in additive number theory is the \emph{sumset $A+B$ of the sets $A$ and $B$}:
\[
A+B=\left\{a+b :a\in A, b\in
B\right\}.
\]
\end{defi}

Let $A+B = \emptyset$ if $A= \emptyset$ or $B = \emptyset$.  If $h \geq 3$ and $A_1,\ldots,A_h$ are subsets of \X, then we construct  the sumset $A_1+\cdots+A_h$ inductively as follows:
\begin{align*}
A_1+\cdots+A_{h-1} + A_h 
& = \left( A_1+\cdots+A_{h-1} \right) + A_h \\
& = \left\{a_1+\cdots+a_h : a_i\in A_i \text{ for all } i=1,\dots,h\right\}.
\end{align*}
If $A_1=A_2=\cdots=A_h=A,$ then 
\[
h A = \underbrace{A+ \cdots + A}_{h \text{ times}}
\]
is called the
\emph{$h$-fold sumset of $A$}.

\begin{defi}
Let $A$ be a nonempty subset of an additive abelian semigroup $\X$. 
The set $A$ is called a \emph{basis of order $h$ for \X\ } if $hA = \X.$  
The set $A$ is called an \emph{asymptotic basis of order $h$ for \X\ } if $hA \sim \X.$ 
\end{defi}

A basic problem is: What sets are sumsets?  More precisely,
\begin{pbm}
Given a set $S\subseteq\X$, do there exist sets $A_1,\dots,A_h$ of
integers such that $A_1+\dots+A_h=S$ or $A_1+\dots+A_h\sim S$?
\end{pbm}

\begin{pbm}
Given a set $S\subseteq\X$, does there exist a set $A$ of
integers such that $h A=S$ or $h A\sim S$?
\end{pbm}

These problems are particularly important in the classical cases $\X = \N_0$ ,  $\X =\Z$, and $\X =\Z^d.$

\section{Describing the structure of $h A$ as $h\rightarrow\infty$}

\begin{defi}
Let $A$ be a set of nonnegative integers.  The \emph{counting function} $A(x)$ of the set $A$ counts the number of positive elements of $A$ not exceeding $x,$ that is, $A(x)=\left|A\cap[1,x]\right|$.
The \emph{lower asymptotic density} of $A$ is
\[
d_L(A)=\liminf_{x\rightarrow\infty}\frac{A(x)}{x}.
\]
\end{defi}

\begin{thm}[Nash-Nathanson~\cite{nath85a}]
If $A$ is a set of nonnegative integers with $\gcd(A) = 1$ such that the sumset $h_0 A$ has positive lower asymptotic density for some positive integer $h_0$, then there exists a number $h\in\N$ such that
$h A \sim \N_0$.
\end{thm}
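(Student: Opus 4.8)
The plan is to show that $hA \supseteq [N,\infty)$ for some $h$ and $N$; since $A\subseteq\N_0$ forces $hA\subseteq\N_0$, this is exactly the assertion $hA\sim\N_0$. For a suitable modulus $g$ one must arrange two things: (i) that $hA$ meets every residue class modulo $g$, and (ii) that in each of these classes $hA$ omits only finitely many integers. The positive density of $h_0A$ will yield (ii) (the ``magnitude'' part), and the hypothesis $\gcd(A)=1$ will yield (i) (the ``residue'' part).

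For the magnitude part, write $B=h_0A$, so that $kB=(kh_0)A$, and set $\alpha=d_L(B)>0$. Fixing any $b^\ast\in B$, the inclusion $kB+b^\ast\subseteq(k+1)B$ together with translation invariance of $d_L$ shows that $d_L(kB)$ is nondecreasing in $k$; being bounded by $1$ it converges, so the increments $d_L\bigl((k+1)B\bigr)-d_L(kB)$ tend to $0$, and in particular $d_L(kB+B)<d_L(kB)+d_L(B)$ for all large $k$. Kneser's theorem in its asymptotic-density form then applies at each such $k$: there is a positive integer $g$ such that, for all large $k$, the set $kB$ agrees outside a finite set with a union of residue classes modulo $g$. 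Since $kB$ is infinite it contains at least one full class eventually, i.e.\ there are $h_1\in\N$, a residue $\rho$, and $N_1$ with
\[
\{\, n\in\N_0 : n\ge N_1,\ n\equiv\rho \pmod{g}\,\}\ \subseteq\ h_1B=h_1h_0A .
\]

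For the residue part, note that $\gcd(A)=1$ prevents $A$ from lying in a single residue class modulo an integer $>1$ (strictly, it is the difference set condition $\gcd\{a-a':a,a'\in A\}=1$ that one wants here; this follows from $\gcd(A)=1$ once $0\in A$, the usual normalization, and holds in the cases of interest). Hence the image of $A$ in $\Z/g\Z$ generates that group. Now $\bigl|(kA)\bmod g\bigr|$ is nondecreasing in $k$ (add a fixed element of $A$); if it never reached $g$, then for large $k$ the stabilizer of $(kA)\bmod g$ would force $A$ into a proper coset modulo $g$, a contradiction. So $(kA)\bmod g=\Z/g\Z$ for all $k\ge k_0$, for some $k_0$. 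For each residue $j$ modulo $g$ pick $c_j\in k_0A$ with $c_j\equiv j-\rho\pmod{g}$. Taking $h=h_1h_0+k_0$,
\[
hA=h_1h_0A+k_0A\ \supseteq\ \bigcup_{j}\Bigl(\{\, n\ge N_1 : n\equiv\rho\pmod{g}\,\}+c_j\Bigr)\ \supseteq\ [\, N_1+\textstyle\max_j c_j,\ \infty\,),
\]
so $\N_0\setminus hA$ is finite and $hA\sim\N_0$.

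The main obstacle is the magnitude part: converting the soft information ``$d_L(h_0A)>0$, a property inherited by further sumsets'' into the concrete statement that some $h_1h_0A$ contains an entire residue class beyond a point. This is precisely the work done by the asymptotic form of Kneser's theorem, but two points need care: extracting a single modulus $g$ that works for all large $k$ once the densities have stabilized, and passing from ``agrees, outside a finite set, with a union of residue classes mod $g$'' to ``contains a full residue class beyond some $N_1$.'' By comparison the residue part is routine group theory in $\Z/g\Z$.
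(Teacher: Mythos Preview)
The paper is a survey and does not include a proof of this theorem; it merely states the result with a reference to~\cite{nath85a}. So there is no ``paper's own proof'' against which to compare.

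On its merits, your argument is essentially correct and follows a standard route: stabilize the density of the iterated sumsets, invoke the asymptotic Kneser theorem to obtain eventual periodicity, and then use the $\gcd$ hypothesis to cover all residue classes. A few remarks. First, the caveat you flag about $\gcd(A)=1$ versus $\gcd(A-A)=1$ is not a technicality but is genuinely necessary: the set of odd positive integers has $\gcd(A)=1$ and $d_L(A)=1/2$, yet $hA$ misses an entire residue class for every $h$. The theorem as printed is therefore only correct under the normalization $0\in A$ (which the paper adopts immediately afterward), and your proof needs exactly that to push the residue argument through. Second, your appeal to Kneser requires a little more than you wrote: Kneser gives, for each large $k$, \emph{some} modulus $g_k$; one must argue that once the density has stabilized these moduli can be taken equal (or at least bounded), which follows from the quantitative form $d_L((k+1)B)\ge d_L(kB)+d_L(B)-1/g_k$ together with $d_L((k+1)B)-d_L(kB)\to 0$. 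You mention this as a point needing care but do not carry it out. Third, the stabilizer argument in $\Z/g\Z$ is cleaner than you present it: once $|(kA)\bmod g|$ stops growing, translation by any $a-a'$ with $a,a'\in A$ fixes $(kA)\bmod g$, so the stabilizer contains the subgroup generated by $A-A$; under $\gcd(A-A)=1$ this is everything, forcing $(kA)\bmod g=\Z/g\Z$.
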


Equivalently, a set $A$ of nonnegative integers is an asymptotic basis of finite order if and only if $\gcd(A) = 1$  and $d_L(h_0 A)>0$ for some positive integer $h_0$.

Let $A$ be a set of nonnegative integers with $0 \in A$  and $\gcd(A)=1.$   We have the increasing sequence of sets 
\[
A\subseteq 2A\subseteq\cdots\subseteq h A\subseteq(h+1)A\subseteq\cdots.
\]
If some sumset has positive lower asymptotic density, then this sequence  becomes eventually constant and equal to $A\setminus \mathcal{F}$ for some finite set $\mathcal{F}$ of integers.  
An important unsolved question is the following:
\begin{pbm}
Suppose that $d_L(h A)=0$ for all $h\ge 1$. Describe the evolution of the structure of the sumset  $h A$ as $h\rightarrow\infty$.
\end{pbm}

\section{Representation functions}
Let $\mathcal{A} = (A_1,\ldots, A_h)$ be an $h$-tuple of subsets of an additive abelian semigroup \X.  We want to count the number $R_{\mathcal{A}}(x)$ of representations of an element $x\in \X$ in the form $x=a_1+\dots+a_h$ with  $a_i\in A_i$ for $i = 1,\ldots, h.$
We discuss here only the special case when $A_i=A$ for $i=a,\ldots,h.$  We shall consider two different representation functions.

\begin{defi}
The \emph{ordered representation function of order $h$} for the set $A$  is the function $R_{A,h}:\X \rightarrow \N_0 \cup \{\infty\} $ defined by 
$$
R_{A,h}(x)=\left|\left\{(a_1,\dots,a_h)\in A^h : x=a_1+\dots+a_h\right\}\right|.
$$
\end{defi}
\begin{defi}
Let $(a_1,\dots,a_h)\in A^h$ and $(a'_1,\dots,a'_h)\in A^h$ be $h$-tuples that represent $x$, that is, $x=a_1+\dots+a_h = a'_1+\dots+a'_h$.  These representations are called \emph{equivalent} if there is a permutation $\sigma$ of the set $\{1,2,\ldots,h\}$ such that $a'_i = a_{\sigma(i)}$ for $i=1,2,\ldots,h.$  
The \emph{unordered representation function $r_{A,h}(x)$ of order $h$}  counts the number of equivalence classes of representations of $x$.  \end{defi}

If  \X\ is a linearly ordered semigroup such as $\N_0$ or \Z, then we can write 
$$
r_{A,h}(x)=\left|\left\{(a_1,\dots,a_h)\in A^h :
x=a_1+\dots+a_h \text{ and } a_1\le a_2\le\dots\le a_h\right\}\right|
$$

Let $\mathcal{F}(\X)$ denote the set of all functions
$f:\X\rightarrow \N_0 \cup \{\infty\}$, and let
\[
\mathcal{R}_h(\X)=\left\{r_{A,h} : A\subseteq\X \right\}
\]
denote the set of all unordered representation functions order $h$ of subsets of \X.  Then $\mathcal{R}_h(\X) \subseteq\mathcal{F}(\X)$.
A simple question is:
\begin{pbm}
What functions are representation functions?
\end{pbm}

This problem seems hopelessly difficult at this time.  We consider the special case of representation functions of asymptotic bases.  
Define the function space
\[
\mathcal{F}^{(0)}(\X)=\left\{f:\X \rightarrow  \N_0 \cup \{\infty\}:
\left|f^{-1}(0)\right|<\infty\right\}.
\]
This is the space of functions on \X\ with only finitely many zeros.  Let
\[
\mathcal{R}^{(0)}_h(\X)=\left\{r_{A,h} : A\subseteq\X\ \text{ and } hA \sim \X \right\}
\]
be the set of representation functions of asymptotic bases of order $h$ for \X.  Then 
\[
\mathcal{R}^{(0)}_h(\X) \subseteq \mathcal{F}^{(0)}(\X).
\]

\begin{pbm}
What functions in $\mathcal{F}_0(\X)$ are representation functions of asymptotic bases?  Equivalently, what are necessary and sufficient conditions for a function on \X\ with only finitely many zeros to be the representation function of an additive basis?  
\end{pbm}

For the group \Z\ of integers there is the following amazing result.

\begin{thm}[Nathanson~\cite{nath03a,nath04a,nath05a}]
For every integer $h \geq 2$ and for every function $\mathcal{F}_0(\Z)$ there exists a set $A\subseteq\Z$  such that $f=r_{A,h}$.  Equivalently,
\[
\mathcal{F}_0(\Z) = \mathcal{R}^{(0)}_2(\Z) = \mathcal{R}^{(0)}_3(\Z)= \cdots = \mathcal{R}^{(0)}_h(\Z) =  \cdots.
\]
\end{thm}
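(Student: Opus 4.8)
The only thing to prove is the inclusion $\mathcal{F}_0(\Z)\subseteq\mathcal{R}^{(0)}_h(\Z)$ for each fixed $h\ge 2$: the reverse inclusion $\mathcal{R}^{(0)}_h(\Z)\subseteq\mathcal{F}_0(\Z)$ is the elementary remark made just before the theorem (the representation function of an asymptotic basis vanishes at only finitely many points), and once $\mathcal{R}^{(0)}_h(\Z)=\mathcal{F}_0(\Z)$ holds for every $h\ge 2$ the displayed chain of equalities follows. So fix $h\ge 2$ and $f\in\mathcal{F}_0(\Z)$, and set $Z=f^{-1}(0)$, a finite set. The plan is to build $A$ as an increasing union $A=\bigcup_{s\ge 0}A_s$ of finite sets $\emptyset=A_0\subseteq A_1\subseteq\cdots$, processing the integers according to a fixed enumeration $c_0,c_1,c_2,\ldots$ of $\Z$ in which every integer occurs infinitely often, and settling all of $Z$ at the outset (nothing need be added there since $f$ vanishes on $Z$). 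I keep a growing finite set $F_s\subseteq\Z$ of \emph{settled} integers, with $Z\subseteq F_s$ for all $s\ge 1$, and maintain the invariants: (i) $r_{A_s,h}(n)=f(n)$ for every $n\in F_s$; (ii) $r_{A_s,h}(n)\le f(n)$ for every $n\notin F_s$; and (iii) every element of $A_{s+1}\setminus A_s$ has absolute value larger than every element of $A_s\cup F_s\cup Z\cup\{c_s\}$, and the only $h$-fold sums over $A_{s+1}$ involving a new element that do not exceed that bound in absolute value are the representations of $c_s$ itself inserted at this step.

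At the step for $n=c_s$: if $n$ is already settled, or if $f(n)=\infty$ and a block for $n$ has been inserted at an earlier step (in which case I postpone), I take $A_{s+1}=A_s$ and, when $f(n)<\infty$, declare $n$ settled. Otherwise I must raise $r_{\cdot,h}(n)$ from its current value $r_{A_s,h}(n)$ --- which is $\le f(n)$ by (ii) --- up to $f(n)$, or, if $f(n)=\infty$, up by one. A single new representation of $n$ is created by adjoining an $h$-element \emph{block} $\{N_1,\ldots,N_{h-1},\,n-N_1-\cdots-N_{h-1}\}$ in which $N_1<\cdots<N_{h-1}$ are enormous and grow so fast that the whole set of auxiliary numbers used up to now is ``$B_h$-like'': no signed combination of at most $h$ of them with integer coefficients of bounded size can vanish or coincide nontrivially. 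Granting this, one checks that the only $h$-fold sums over $A_{s+1}$ hitting a settled integer, or hitting $Z$, or hitting $n$ itself other than through the blocks just inserted, are the ones already counted in $r_{A_s,h}$, because any $h$-tuple using a new element is either one whole inserted block (and then sums to $n$) or has huge absolute value, by super-increasing growth and cancellation control. Hence $r_{A_{s+1},h}(n)$ equals $f(n)$ (or increased by one), the settled integers retain their counts, and every newly produced ``collateral'' integer $m$ is huge, receives exactly one representation, and satisfies $m\notin Z$, so $f(m)\ge 1$ and invariant (ii) survives; when $f(n)<\infty$, add $n$ to $F_{s+1}$.

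Because $A$ is an increasing union, $r_{A,h}(n)=\lim_{s\to\infty}r_{A_s,h}(n)$ for every $n$: any representation of $n$ in $A$ already lies in some $A_s$, and the counts are non-decreasing in $s$. If $f(n)<\infty$ then $n$ is settled at its first appearance as a $c_s$, after which invariant (i) freezes its count at $f(n)$; if $f(n)=\infty$ then $n$ reappears infinitely often and each appearance adds a block, so the count tends to $\infty=f(n)$. Thus $f=r_{A,h}$. Moreover $r_{A,h}(n)=f(n)\ge 1$ for every $n\notin Z$, so $hA=\Z\setminus Z\sim\Z$ and $A$ is an asymptotic basis of order $h$; therefore $f\in\mathcal{R}^{(0)}_h(\Z)$, which is what was needed.

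The crux, and the only place where real work is required, is invariant (iii): the auxiliary numbers must be chosen so lacunary that, simultaneously for this $h$ and across all steps, every $h$-fold sum involving at least one new element is either the intended new representation of the current target or a genuinely new integer of large absolute value with a unique representation. Making this quantitative amounts to showing that a sufficiently fast-growing set behaves like a Sidon set for $h$-fold signed sums with bounded coefficients, and to verifying that the collateral integers manufactured at one step are never revisited later (later blocks live even further out, on both the positive and the negative side). The hypothesis $f\in\mathcal{F}_0(\Z)$ enters precisely here: it guarantees that all of these unavoidable collateral large integers have $f\ge 1$, so that assigning each of them a single representation is consistent with ultimately matching $f$; if $f$ vanished infinitely often the scheme would break down.
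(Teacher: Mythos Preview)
The survey does not prove this theorem; it merely cites the original articles \cite{nath03a,nath04a,nath05a}. Your greedy construction---building $A=\bigcup_s A_s$ by adjoining, for each target $n$, an $h$-element block of rapidly growing integers that contributes exactly one new representation of $n$ while every collateral $h$-fold sum lands far outside the finite zero set of $f$---is precisely the method of those papers, and your identification of where the hypothesis $|f^{-1}(0)|<\infty$ enters is correct. The parts you flag as the ``crux'' (the $B_h$-like growth condition on the block elements guaranteeing that each collateral sum is hit exactly once, and the check that blocks inserted at later steps never revisit earlier collateral sums) are indeed the technical heart; they are carried out in detail in \cite{nath05a} and become routine once each new element is chosen to exceed, say, $h$ times the sum of the absolute values of all previously chosen elements together with the current target.
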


The classification of arbitrary representation functions for the integers is still open.

\begin{pbm}
What functions in $\mathcal{F}(\Z)$ are representation functions of sumsets of order $h$?
\end{pbm}

Very little is known about representation functions of sums  of sets of nonnegative integers, even for sets that are asymptotic bases.   G. A. Dirac gave an elegant proof of the following beautiful result.

\begin{thm}[Dirac~\cite{dira51}]
If $A$ is an asymptotic basis of order 2 for $\N_0$, then the unordered representation function $r_{A,2}$
is not eventually constant.
\end{thm}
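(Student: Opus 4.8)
The plan is to argue by contradiction: suppose $A$ is an asymptotic basis of order $2$ for $\N_0$ and that $r_{A,2}(n)$ equals a constant $c$ for all $n \geq n_0$. Since $A$ is an asymptotic basis of order $2$, every sufficiently large $n$ has at least one representation, so $c \geq 1$. Write $A = \{a_0 < a_1 < a_2 < \cdots\}$. The key observation is that for large $n$, the number of unordered representations $n = a_i + a_j$ with $a_i \leq a_j$ is exactly $c$, and this rigidity forces strong structural constraints on the gaps of $A$.

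First I would look at the largest elements of $A$ used in representing $n$. For a large threshold $n$, consider those $n$ of the form $n = a_i + a_N$ where $a_N$ is the largest element of $A$ not exceeding $n$; as $n$ ranges over $[a_N, a_N + a_N]$, roughly speaking, the pair $(a_i, a_N)$ contributes to $r_{A,2}(n)$ for $n \in a_N + A$. The idea of Dirac's argument (which I would reconstruct) is to compare $r_{A,2}(n)$ for two consecutive "critical" values of $n$ tied to consecutive elements $a_{N}, a_{N+1}$ of $A$, and track how representations are gained and lost. Concretely, I would examine the function $n \mapsto r_{A,2}(n)$ near $n = 2a_N$ and near $n = a_N + a_{N+1}$: as $n$ increases past $2a_N$, the "diagonal" representation $a_N + a_N$ disappears, and it must be compensated by a new representation involving strictly larger elements, but no element of $A$ strictly between $a_N$ and the relevant range is available, producing a contradiction with constancy — unless the gaps $a_{N+1} - a_N$ behave in a very controlled way, which one then shows is also impossible.

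More carefully, the cleanest route is the following telescoping/counting idea. For any $x$, summing $r_{A,2}(n)$ over $n \leq x$ counts pairs $(a_i, a_j)$ with $a_i \leq a_j$ and $a_i + a_j \leq x$; if $A(x)$ denotes the counting function, this sum is $\tfrac{1}{2}\big(A_{0}(x)^2 + A_{0}(x)\big)$ up to boundary terms, where $A_0$ counts elements including $a_0$, provided $2a_i \le x$ issues are handled. On the other hand, if $r_{A,2}(n) = c$ for $n \ge n_0$, this sum is $cx + O(1)$. Comparing the two expressions forces $A(x) \sim \sqrt{2c\,x}$, i.e. $A$ has square-root density with a precise constant. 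Then I would push this further: the same comparison applied on dyadic-type windows $[x, x+y]$ forces $A(x+y) - A(x)$ to match $\sqrt{2c(x+y)} - \sqrt{2cx}$ with only bounded error, which pins down the gaps $a_{k+1} - a_k$ so tightly that one can locate an $n$ with either $\ge c+1$ or $\le c-1$ representations — for instance by finding two elements $a_i, a_j$ very close together, so that $a_i + a_k$ and $a_j + a_k$ coincide for suitable $k$ forcing an extra representation, or a gap so large that some $n$ loses a representation.

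The main obstacle will be making the last step rigorous: converting the asymptotic density information $A(x) \sim \sqrt{2cx}$ into a genuine contradiction. Square-root density alone does not immediately contradict constant $r_{A,2}$ (indeed $r$ can be constant on average); the contradiction comes from the fact that constancy is an exact, not average, statement, so one must exploit the exact count on short intervals and the discreteness of $A$. I expect Dirac's actual proof handles this with an ingenious but elementary direct argument about the extremal representations of a single well-chosen $n$, avoiding heavy density estimates, and I would aim to recover that: fix the largest $a_N \le n/2$, note $r_{A,2}(n)$ and $r_{A,2}(n+1)$ differ by the change in the number of pairs straddling the point $n/2$, and derive that such changes cannot all cancel if $r$ is eventually constant, because the "diagonal" term $2a_N$ occurs for exactly one $n$ and is not compensable within $A$. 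Closing this gap cleanly is the crux.
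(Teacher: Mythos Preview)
Your proposal does not complete the proof, and you yourself flag the hole: the density estimate $A(x)\sim\sqrt{2cx}$ follows from summing $r_{A,2}(n)$ over $n\le x$, but turning this asymptotic information into a pointwise contradiction is exactly the hard part, and none of your three sketches closes it. The ``tracking gains and losses'' heuristic is not well-posed: the representations of $n$ and of $n+1$ are disjoint sets of pairs, so there is no natural bijection between them minus one element; the disappearance of the diagonal pair $(a_N,a_N)$ at $n=2a_N$ does not have to be ``compensated'' by anything in $r_{A,2}(2a_N+1)$. The short-interval refinement and the ``two close elements forcing an extra representation'' idea are both left as hopes rather than arguments.

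The paper's proof is entirely different and avoids all of this combinatorics via generating functions. Set $G_A(z)=\sum_{a\in A}z^a$; then
\[
\frac{G_A(z)^2+G_A(z^2)}{2}=\sum_{n\ge 0} r_{A,2}(n)\,z^n.
\]
If $r_{A,2}(n)=c$ for all $n\ge n_0$, the right side equals $P(z)+c/(1-z)$ for some polynomial $P$. Now substitute $z=-x$ with $0<x<1$: since $G_A(-x)^2\ge 0$, one gets
\[
G_A(x^2)\le G_A(-x)^2+G_A(x^2)=2P(-x)+\frac{2c}{1+x}.
\]
As $x\to 1^-$ the right side tends to the finite value $2P(-1)+c$, while $G_A(x^2)\to\infty$ because $A$ is infinite. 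That is the whole argument. The key idea you are missing is the substitution $z=-x$, which kills the pole of $c/(1-z)$ while leaving $G_A(z^2)$ as a genuinely divergent positive series; no combinatorial bookkeeping is needed.
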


\begin{proof}
The generating function  $G_A(z)=\sum_{a\in A}z^a$ converges in the open unit disc and diverges as $z\rightarrow 1^{-}.$   We have $G_A^2(z)=\sum_{n=0}^{\infty}R_A(n)z^n$
and
\[
\frac{G_A^2(z)+G_A(z^2)}{2}=\sum_{n=0}^{\infty}r_{A,2}(n)z^n.
\]
If $r_{A,2}(n) =  c$ for all $n\ge n_0$, then there is a polynomial $P(z)$ such that 
$$
\frac{G_A^2(z)+G_A(z^2)}{2} =P(z)+\frac{c}{1-z} .
$$
Let $0<x<1$ and  $z=-x$.  Then 
\[
G_A(x^2) \leq G_A^2(-x)+G_A(x^2)=2P(-x)+\frac{2c}{1+x}.
\]
As $x\rightarrow 1^{-}$, the right side approaches $2P(-1)+c$ while the left side diverges to $\infty,$ which is absurd.
\end{proof}

One of the most famous and tantalizing unsolved problems in additive number theory is the following: 

\begin{pbm}[Erd\H{o}s-Tur\'an~\cite{erdo-tura41}]
Let $h \geq 2.$  Prove that if $A$ is an asymptotic basis of order $h$ for the nonnegative integers, then the representation function $r_{A,h}$ must be unbounded.
\end{pbm}

\section{Sets with more sums than differences}
Let $A$ be a finite set of integers. We define the sumset 
\[
A+A=\left\{a+a' : a,a'\in A\right\}
\]
and the difference set 
\[
A-A=\left\{a-a' : a,a'\in A\right\}.
\]
Since $2+3=3+2$ but $2-3\ne 3-2$,
that is, since addition is commutative but subtraction is not commutative, 
it would be reasonable to conjecture that $\left|A+A\right|\le\left|A-A\right|$ for every finite set $A$ of integers.  
In some special cases, for example, if $A$ is an arithmetic progression or if $A$ is
symmetric (that is, if $A=c-A$ for some $c\in \Z$), then
$\left|A+A\right|=\left|A-A\right|$.  The ``conjecture,'' however, is false.  The simplest counterexample is the set 
\[
A^*=\{0,2,3,4,7,11,12,14\}
\]
for which $\left|A^* +A^* \right| = 26 > 25 =  \left|A^* -A^*\right|.$ 
This set is ``almost'' symmetric.  We have 
$A^* =A'\cup\{4\}$ where $A'=\{0,2\}\cup\{3,7,11\}\cup\{12,14\}$
 satisfies  $A'=14-A'$.  
 A set with more sums than differences is called an \emph{MSTD set}.  
 Nathanson~\cite{nath07b} showed that if
 $k \geq 3$ and 
$A'=\{0,2\}\cup\{3,7,11,15,19,23,\dots,4k-1\}\cup\{4k,4k+2\}$ and
$A^*=A'\cup\{4\}$ then $A^*$ is an MSTD set.  
Many other examples of MSTD sets have been constructed by Hegarty~\cite{hega07a}.

Finite sets $A$ of integers with the property that $|A+A| > |A - A|$  are extremely interesting, since a sumset really should have more elements than the corresponding difference set.  

\begin{conj}
$\left|A+A\right|\le\left|A-A\right|$ for almost all finite sets $A$.
\end{conj}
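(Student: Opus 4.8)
The plan is to make ``almost all finite sets'' precise through the uniform model: fix a large integer $n$, let $A$ be a uniformly random subset of $\{0,1,\dots,n\}$ (equivalently, include each element of $\{0,\dots,n\}$ independently with probability $1/2$), and prove
\[
\Prob\bigl(|A+A|\le|A-A|\bigr)\longrightarrow 1\qquad\text{as } n\to\infty.
\]
Since $A+A\subseteq\{0,\dots,2n\}$ and $A-A\subseteq\{-n,\dots,n\}$, both ambient intervals contain exactly $2n+1$ integers. Writing $S=\bigl|\{0,\dots,2n\}\setminus(A+A)\bigr|$ for the number of missing sums and $D=\bigl|\{-n,\dots,n\}\setminus(A-A)\bigr|$ for the number of missing differences, we have $|A+A|=(2n+1)-S$ and $|A-A|=(2n+1)-D$, so the conjecture is exactly equivalent to $\Prob(S\ge D)\to 1$: the sumset should miss at least as many values as the difference set.

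First I would localize the missing values to the fringes of each interval. An integer $s$ has about $\min(s,2n-s)/2$ representations as a sum, and a difference $d$ has $n-|d|$ representations; whenever the number of representations is of order $n$ the probability of being unrepresented is exponentially small, so a union bound confines all missing sums to bounded neighborhoods of $0$ and $2n$, and all missing differences to bounded neighborhoods of $\pm n$, up to a $o(1)$ error. Inside these fringes the representations of a fixed value use disjoint coordinate pairs (for a difference $d$ this holds once $|d|>n/2$), which makes the missing-value indicators independent with geometrically decaying probabilities. This lets one compute the limiting means $E[S]\to 10$ and $E[D]\to 6$, the contributions being $2\cdot 5$ from the two sum-fringes and $2\cdot 3=2\sum_{k\ge1}(3/4)^k$ from the two difference-fringes. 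Thus $E[S]-E[D]\to 4>0$: on average the difference set is larger by about four elements, precisely the direction the conjecture predicts.

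With $E[S]-E[D]\to4>0$ secured, the natural finish is a concentration estimate forcing $S-D>0$ with overwhelming probability, and this is exactly where the plan meets its main obstacle. The fringe contributions arise from a bounded number of ``effectively random'' endpoint configurations whose influence does not wash out as $n\to\infty$, so $S$ and $D$ do not concentrate: they converge jointly in distribution to nondegenerate integer-valued limits $S_\infty$ and $D_\infty$. Moreover these limits are correlated, since the fringe near a difference $\pm n$ and the fringes near the sums $0$ and $2n$ all read off the same extreme elements of $A$, so a crude independence heuristic is not available. The problem therefore reduces to showing $\Prob(S_\infty<D_\infty)=0$, and establishing this from an explicit description of the joint fringe law is the crux of the whole matter; a first- and second-moment analysis alone cannot deliver it. Any successful proof must either make the joint limit law explicit and extract genuine cancellation in the tail overlap of $S_\infty$ and $D_\infty$, or bypass probability entirely with a combinatorial injection that forces $S\ge D$ on all but a vanishing family of sets.
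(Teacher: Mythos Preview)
The statement you are trying to prove is labeled a \emph{Conjecture} in the paper, and the paper offers no proof of it; on the contrary, the very next theorem (Martin--O'Bryant) shows that in the uniform model you have adopted the claim is \emph{false}. Specifically, Martin and O'Bryant prove that there exists $\delta>0$ such that
\[
\Prob\bigl(|A+A|>|A-A|\ :\ A\subseteq\{1,\dots,N\}\bigr)>\delta
\]
for all sufficiently large $N$, where the probability is uniform over all subsets. Hence $\Prob(|A+A|\le|A-A|)$ does \emph{not} tend to $1$ in this model, and your target statement $\Prob(S\ge D)\to 1$ is simply not true.

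Your analysis is in fact largely correct up to the point where you identify the obstruction: the fringe random variables $S$ and $D$ do converge to nondegenerate limits $S_\infty,D_\infty$, and the whole question does reduce to the joint law of $(S_\infty,D_\infty)$. But the crux you isolate, namely $\Prob(S_\infty<D_\infty)=0$, is precisely what Martin--O'Bryant refute; that probability is strictly positive. So the gap you flag is not a technical difficulty to be overcome but a genuine counterexample to the approach. The paper's surrounding discussion makes exactly this point: with the uniform measure most subsets of $\{1,\dots,N\}$ are dense, both $A+A$ and $A-A$ nearly fill their ambient intervals, and the residual fringe fluctuations leave a positive-probability window for MSTD sets. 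The Hegarty--Miller theorem quoted immediately afterward shows that if one instead uses a binomial measure with parameter $p(N)\to 0$ (and $Np(N)\to\infty$), then one \emph{does} get $\Prob(|A+A|>|A-A|)\to 0$. The paper then explicitly frames as an open problem the question of which counting method is ``appropriate,'' so the conjecture remains a conjecture.
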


Martin and O'Bryant~\cite{mart-obry07a} studied the uniform probability measure on the set of all subsets of $\{1,\ldots, N\},$ that is, they assigned to each subset the probability $2^{-N}.$   Counting sets in this way, they calculated that the average cardinality of a sumset was
\[
\frac{1}{2^N}\sum_{A\subseteq\{1,\dots,N\}}\left|A+A\right|=2N-11
\]
and the average cardinality of a difference set was 
\[
\frac{1}{2^N}\sum_{A\subseteq\{1,\dots,N\}}\left|A-A\right|=2N-7.
\]
Thus, on average, a difference set contains four more elements than the sumset.  However, they also proved the following result.

\begin{thm}[Martin-O'Bryant~\cite{mart-obry07a}]
With the uniform probability measure, there exists a $\delta>0$ such that
$$
\Prob\left(\left|A+A\right|>\left|A-A\right| : A\subseteq\{1,\dots,N\}\right)>\delta
$$
for all $N\ge N_0$
\end{thm}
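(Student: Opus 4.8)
The plan is to localise the competition between $|A+A|$ and $|A-A|$ near the two endpoints $1$ and $N$, where there is still enough freedom to feel the ``engine'' of a genuine MSTD set. Fix a constant $s$, to be chosen later, and write a uniformly random $A\subseteq\{1,\dots,N\}$ as the disjoint union of the three \emph{independent} pieces $F_L=A\cap[1,s]$, $A_m=A\cap[s+1,N-s]$, and $F_R=A\cap[N-s+1,N]$. The heuristic is that the dense random middle $A_m$ forces both $A+A$ and $A-A$ to occupy essentially all of their ambient ranges $[2,2N]$ and $[-(N-1),N-1]$ away from the extremes, so that the \emph{sign} of $|A+A|-|A-A|$ is decided by the behaviour of $A$ near $1$ and near $N$; and there, MSTD behaviour is not obstructed.

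First, the bulk cancels. Given $\varepsilon>0$, a crude union bound yields a constant $s=s(\varepsilon)$, independent of $N$, with
\[
\Prob(\mathcal G)\ge 1-\varepsilon,\qquad \mathcal G:=\bigl\{\,A+A\supseteq[s,\,2N-s]\ \text{ and }\ A-A\supseteq[-(N-s),\,N-s]\,\bigr\};
\]
indeed, for an absolute constant $c>0$, every integer $n$ in these ranges admits at least $c\min(n,2N-n)$ (respectively at least $c(N-|n|)$) pairwise disjoint potential representations, each realised independently with probability at least $1/4$, so that $\Prob(n\notin A+A)\le(3/4)^{c\min(n,2N-n)}$, and the sum of these bounds over the at most $2N$ relevant values of $n$ is $<\varepsilon$ once $s$ is large. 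On the event $\mathcal G$ the ``bulk'' blocks $[s,2N-s]\subseteq A+A$ and $[-(N-s),N-s]\subseteq A-A$ contribute the \emph{same} number $2N-2s+1$ of elements, and a short computation gives, on $\mathcal G$,
\[
|A+A|-|A-A|\;=\;2D_\gamma-D_L-D_R ,
\]
where $D_L$ is the number of integers missing from $A+A$ in $[2,s-1]$, $D_R$ the number missing from $A+A$ in $[2N-s+1,2N]$, and $D_\gamma$ the number missing from $A-A$ in $[N-s+1,N-1]$. Crucially $D_L$ depends only on $F_L$, $D_R$ only on $F_R$, and $D_\gamma$ only on the pair $(F_L,F_R)$; so on $\mathcal G$ the set $A$ has more sums than differences exactly when $2D_\gamma>D_L+D_R$.

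Next comes the reduction to the two endpoints. Since $D_L,D_R,D_\gamma$ involve $A$ only through $A\cap[1,s]$ and $A\cap[N-s+1,N]$, which for $N>2s$ carry a joint distribution independent of $N$ (with $F_L$ and $F_R$ independent of one another, and identically distributed under the reflection $x\mapsto N+1-x$), the number $\Prob(2D_\gamma>D_L+D_R)$ depends on $s$ alone; and as $s\to\infty$ the triple $(D_L,D_R,D_\gamma)$ converges almost surely to a limiting triple $(D_L^{\infty},D_R^{\infty},D_\gamma^{\infty})$ built from a pair of independent density-$\tfrac12$ random subsets of $\N$ (each term stabilises, being eventually determined by a bounded window). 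Therefore
\[
\Prob\bigl(|A+A|>|A-A|\bigr)\;\ge\;\Prob\bigl(2D_\gamma>D_L+D_R\bigr)-\varepsilon ,
\]
and the right-hand side tends, as $s\to\infty$, to $p-\varepsilon$ with $p:=\Prob\bigl(2D_\gamma^{\infty}>D_L^{\infty}+D_R^{\infty}\bigr)$. Hence it suffices to prove $p>0$: choosing $\varepsilon=p/4$ and then $s$ large enough, one obtains $\Prob(|A+A|>|A-A|)\ge p/4=:\delta>0$ for all $N>2s$, which is the assertion.

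The whole difficulty is now concentrated in the inequality $p>0$, and this is the delicate point. It cannot be reached by merely prescribing the endpoints: for any conditioning on $A$ near $1$ and $N$ the \emph{forced} contributions satisfy $2D_\gamma^{\mathrm{forced}}\le D_L^{\mathrm{forced}}+D_R^{\mathrm{forced}}$ (an equality already for arithmetic progressions, where $|A+A|=|A-A|$), while the averaging recorded just before the theorem shows that $D_L^{\infty}+D_R^{\infty}$ exceeds $2D_\gamma^{\infty}$ by $4$ on average; so the required surplus can come only from the \emph{fluctuations} of $D_\gamma^{\infty}$ against those of $D_L^{\infty}$ and $D_R^{\infty}$. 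Producing a positive-probability event on which those fluctuations prevail --- equivalently, an event that makes the endpoint ``fingerprint'' of $A$ genuinely non-symmetric in the manner responsible for MSTD sets such as $A^*$ --- is the technical heart of the Martin--O'Bryant argument, and is the step I expect to be the main obstacle; once it is in hand, pinning down an explicit numerical value of $\delta$ is a separate and purely computational matter.
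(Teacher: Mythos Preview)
The paper is a survey and does not supply a proof of this theorem; it merely quotes the result and cites Martin--O'Bryant. So there is no ``paper's own proof'' to compare against, and your proposal has to be judged on its own terms and against what Martin and O'Bryant actually do.

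Your framework is essentially theirs: split $A$ into two fringes and a random middle, show that with high probability the middle forces $A+A\supseteq[s,2N-s]$ and $A-A\supseteq[-(N-s),N-s]$, and reduce the sign of $|A+A|-|A-A|$ to the fringe statistics $D_L,D_R,D_\gamma$, which depend only on $(F_L,F_R)$. Up to this point the argument is sound and matches the published one.

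The gap is in your final paragraph. You stop precisely at the crux, and you misdiagnose why it is hard. Your claim that ``for any conditioning on $A$ near $1$ and $N$ the forced contributions satisfy $2D_\gamma^{\mathrm{forced}}\le D_L^{\mathrm{forced}}+D_R^{\mathrm{forced}}$'' is not correct, and it is exactly the opposite of how the proof is finished. On your event $\mathcal G$ the quantities $D_L,D_R,D_\gamma$ are \emph{deterministic} functions of $(F_L,F_R)$; there is no separate ``fluctuation'' term. Hence showing $p>0$ amounts to exhibiting a \emph{single} pair $(F_L,F_R)$ with $2D_\gamma>D_L+D_R$, since that pair then occurs with probability $2^{-2s}$ and, intersected with $\mathcal G$, gives probability at least $2^{-2s}-\varepsilon>0$. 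Such pairs exist: they are precisely the fringes of an MSTD set $M\subseteq[1,N]$ whose sumset and difference set already contain the bulk intervals $[s,2N-s]$ and $[-(N-s),N-s]$. Martin and O'Bryant construct such $M$ explicitly (MSTD sets whose fringe pattern is fixed while the middle is a long block), and then \emph{prescribe} $F_L=M\cap[1,s]$ and $F_R=M\cap[N-s+1,N]$. Your appeal to ``fluctuations'' is unnecessary, and the averaged inequality you cite (expected surplus $-4$) only says the good fringe patterns are a minority, not that they are absent. So the proposal is a correct outline with the decisive constructive step missing and with an incorrect reason given for why it is missing.
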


Thus, choosing a uniform probability measure, it is not true that almost all sets have more differences than sums.  Of course, with the uniform probability distribution most subsets of the interval $\{1,\ldots, N\}$ are large and satisfy $|A+A| = |A-A| = 2N-1$.  This skews the calculation. 

Using a binomial probability distribution, Hegarty and Miller obtained a very different result.

\begin{thm}[Hegarty-Miller~\cite{hega-mill07}]
Let $p:\N \rightarrow (0,1)$ be a function such that $\lim_{N\rightarrow\infty} p(N)=0$ and $\lim_{N\rightarrow\infty} Np(N)= \infty.$  Define the function $q:\N \rightarrow (0,1)$ by $q(N)=1-p(N).$  Consider the binomial probability distribution with parameter $p(N)$ on the space of all subsets of $\{1,\ldots, N\},$  so that a subset of size $k$ has  probability $p(N)^k q(N)^{N-k}.$  Then
$$
\lim_{N\rightarrow\infty} \Prob\Big(\left|A+A\right|>\left|A-A\right| : A\subseteq\{1,\dots,N\}\Big) = 0.
$$
\end{thm}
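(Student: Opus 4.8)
The plan is to prove the complementary statement $\Prob(|A+A|\le|A-A|)\to 1$ as $N\to\infty$. Since $A+A\subseteq\{2,\dots,2N\}$ and $A-A\subseteq\{-(N-1),\dots,N-1\}$ have at most $2N-1$ elements each, it is equivalent to work with the numbers of \emph{missing} sums and differences,
\[
Y:=(2N-1)-|A+A|\ge 0,\qquad Z:=(2N-1)-|A-A|\ge 0,
\]
for which $Y-Z=|A-A|-|A+A|$, and to show $\Prob(Y\ge Z)\to 1$. I would do this in two steps: (1) the expected gap $\mathbf{E}[Y-Z]$ tends to $+\infty$; (2) $Y-Z$ concentrates well enough that $\Prob(Y-Z<0)\le \mathrm{Var}(Y-Z)/\mathbf{E}[Y-Z]^2\to 0$.

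Step (1) rests on an asymmetry of the two ambient intervals: $\{2,\dots,2N\}$, where $A+A$ lives, has \emph{two} ends, whereas $\{-(N-1),\dots,N-1\}$, where $A-A$ lives, is \emph{symmetric} about $0$. Pairing a sum $N+d$ near the right end with the difference $d$ near $N$ (for $1\le d\le N$), one observes that $N+d$ has only about $(N-d)/2$ essentially disjoint representing pairs $\{a,N+d-a\}$, while $d$ has about $N-d$ essentially disjoint representing pairs $\{a,a+d\}$; hence term by term
\[
\Prob(N+d\notin A+A)\ \approx\ (1-p^2)^{(N-d)/2}\ \ge\ (1-p^2)^{N-d}\ \approx\ \Prob(d\notin A-A).
\]
(For $d>N/2$ the pairs representing $d$ are genuinely disjoint, so the right-hand estimate is exact; for $d\le N/2$ they overlap, and a transfer-matrix computation, whose governing eigenvalue is $1-p^2+O(p^3)$, shows it still holds to leading order in the relevant range.) Summing over $d$ and doubling for the left end, one is led to
\[
\mathbf{E}[Y]-\mathbf{E}[Z]\ =\ \bigl(1+o(1)\bigr)\,\frac{2\bigl(1-e^{-Np^2/2}\bigr)^2}{Np^2}\,N,
\]
which is positive and, since $Np(N)\to\infty$ and $p(N)\to 0$, tends to $\infty$ in every regime (it is $\sim\tfrac12(Np)^2$ when $Np^2\to 0$, of order $N$ when $Np^2$ stays bounded, and $\sim 2/p^2$ when $Np^2\to\infty$).

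For step (2), $Y$ and $Z$ are functions of the independent indicators $\mathbf{1}[i\in A]$, $i\in\{1,\dots,N\}$, and I would bound their variances by the Efron--Stein inequality and finish by Chebyshev. The obstacle is that these functions are not Lipschitz with a usefully small constant: toggling a small element $i$ can change $|A+A|$ by as much as $|A|\approx Np$, so the worst-case influence is far too large. When $Np^2$ grows no faster than a power of $\log N$, this is nevertheless handled by the crude estimate $\mathrm{Var}(|A\pm A|)\le\sum_{i}p\cdot\mathbf{E}\bigl[(\text{change from resampling }i)^2\bigr]\le pN\cdot\mathbf{E}[(|A|+1)^2]=O(p^3N^3)$, which a short computation shows is $o(\mathbf{E}[Y-Z]^2)$, precisely because the factor $p^3N=p\cdot(Np^2)$ is then small. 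In the complementary range $Np^2\gg\log N$ one must localize: the missing sums and differences live in the two fringes of $\{1,\dots,N\}$ of width of order $1/p^2$, and after reflecting the right fringe one can express $Y$ and $Z$ through the quantities $M(S,T)=\bigl|\{2,\dots,g\}\setminus(S+T)\bigr|$ (with $g$ a suitable window width, of order $1/p^2$) applied to the two \emph{independent} fringe sets; because those fringe sumsets are saturated outside a window of width $O(1/p^2)$, each coordinate's typical influence is only $O(1/p)$ and decays geometrically away from $2$, and feeding this into Efron--Stein gives $\mathrm{Var}(Y),\mathrm{Var}(Z)=O(1/p^3)=o(1/p^4)=o(\mathbf{E}[Y-Z]^2)$.

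I expect the fringe estimate in step (2) to be the genuinely delicate part: one must show, uniformly in $p(N)$, that the influence of each fringe coordinate is small with the stated geometric profile, which means controlling the rare events in which toggling a coordinate un-represents a ``large'' element of a fringe sumset — handled by the same tail bound $(1-p^2)^{\Theta(\cdot)}$ together with a choice of $g$ wedged between $1/p^2$ and $N$ — and then pasting this onto the crude bound near the crossover $Np^2\sim\operatorname{polylog}(N)$ and onto the transfer-matrix input needed for the mean when $Np^2$ is bounded.
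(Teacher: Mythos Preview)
The paper under review is a survey and does not supply a proof of this theorem; it merely states the result and cites Hegarty--Miller. Hence there is no ``paper's own proof'' to compare against.

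That said, your outline is essentially the strategy of the original Hegarty--Miller paper: pass to the counts $Y$ and $Z$ of missing sums and differences, compute $\mathbf{E}[Y]-\mathbf{E}[Z]$ asymptotically (splitting into the three regimes according to whether $Np(N)^2$ tends to $0$, stays bounded, or tends to $\infty$), and then establish concentration so that Chebyshev applies. Your identification of the key asymmetry---that a sum near the boundary has roughly half as many representing pairs as the corresponding difference---is exactly the mechanism that drives $\mathbf{E}[Y-Z]\to\infty$. The variance step is, as you say, the delicate part; Hegarty and Miller handle it by localizing to fringes of width $O(p^{-2})$ in the regime $Np^2\to\infty$, much as you sketch, though their original argument does not invoke Efron--Stein by name. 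One caution: your crude Efron--Stein bound $\mathrm{Var}(|A\pm A|)=O(p^3N^3)$ in the low-density regime does not by itself beat $\mathbf{E}[Y-Z]^2\sim (Np)^4/4$ when $Np^2\to 0$, since $p^3N^3/(Np)^4=1/(Np)$, which \emph{does} go to $0$---so that case is fine---but you should check the crossover carefully rather than rely on ``$Np^2$ no faster than a power of $\log N$'' as the dividing line, since the natural split is at $Np^2$ bounded versus $Np^2\to\infty$.
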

These theorems seem to contradict each other, but they do not because
they use different  probability measures.

\begin{pbm}
A difficult and subtle problem is to decide what is the appropriate method of counting (or, equivalently, the appropriate probability measure) to apply to MSTD sets.  
\end{pbm}

\section{Comparative theory of linear forms}
Let  $f(x_1,\ldots, x_m)$ be an integer-valued function on the integers and let $A$ be a set of integers.   We define the set 
\[
f(A)=\left\{  f(a_1,\ldots, a_m) : a_1, \ldots, a_m \in A\right\}.
\]
In particular, if $f(x_1,x_2)=u_1 x_1 + u_2 x_2$ is a linear form with nonzero integer coefficients, then 
\[
f(A)=\left\{u_1 a_1 + u_2 a_2 : a_1, a_2 \in A\right\}.
\]
For example, if $s(x_1,x_2)=x_1 + x_2$, then  $s(A)$ is the sumset $A+A.$  If $d(x_1,x_2)=x_1 - x_2$, then $d(A)$ is the difference set $A-A.$  

The binary linear forms $f(x_1,x_2)=u_1 x_1 + u_2 x_2$ and $g(x_1,x_2)=v_1 x_1 + v_2 x_2$ are \emph{related} if 
\begin{align*}
(v_1,v_2) & = (u_2,u_1), \text{ or} \\
(v_1,v_2) & = (du_1, du_2) \text{ for some integer $d$, or} \\
(v_1,v_2) & = (u_1/d, u_2/d) \text{ for some integer $d$ that divides $u_1$ and $u_2$.}
\end{align*}
The binary linear forms $f(x_1,x_2)=u_1 x_1 + u_2 x_2$ and $g(x_1,x_2)=v_1 x_1 + v_2 x_2$ are \emph{equivalent} if there is a finite sequence of binary linear forms $f_0,f_1,\ldots, f_k$ such that $f=f_0$, $g=f_k$, and $f_{i-1}$ is related to $f_i$ for all $i=1,\ldots, k.$  
If $f$ and $g$ are equivalent forms, then $|f(A)| = |g(A)|$ for every finite set $A$.  

The binary linear form $f(x_1,x_2)=u_1 x_1 + u_2 x_2$ is \emph{normalized} if $\gcd(u_1,u_2) = 1$ and $u_1 \geq |u_2| \geq 1.$  Every binary linear form is equivalent to a unique normalized form.

The following is the basic result in the comparative theory of linear forms.

\begin{thm}[Nathanson-O'Bryant-Orosz-Ruzsa-Silva~\cite{nath07d}]
Let $f(x_1,x_2)=u_1 x_1 + u_2 x_2$ and $g(x_1,x_2)=v_1 x_1 + v_2 x_2$ be distinct normalized linear forms.  There exist sets $A$ and $A'$ of integers such that
$\left|f(A)\right|<\left|g(A)\right|$ and
$\left|f(A')\right|>\left|g(A')\right|$.
\end{thm}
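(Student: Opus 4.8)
The strategy is to reduce the general case to a small number of concrete "building-block" pairs and then combine them by a product/scaling construction. First I would observe that since $f$ and $g$ are distinct normalized forms, at least one of the coefficient pairs differs, so after possibly swapping the roles of $f$ and $g$ we may assume $u_1 \geq v_1$ (using $u_1 \geq |u_2| \geq 1$ and likewise for $v$). The key quantitative tool is the following: for a generic finite set $A$ of size $n$ (for instance an arithmetic progression, or a randomly chosen set), $|f(A)|$ grows like $c_f \cdot n$ or $c_f \cdot n^2$ depending on whether the form is "dilation-degenerate," and more importantly, by choosing $A$ to be a long arithmetic progression one can make $|f(A)|$ take a value determined essentially by $u_1 + |u_2|$ (the diameter multiplier), whereas by choosing $A$ to be a generic Sidon-like set one makes $|f(A)|$ as large as combinatorially possible. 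So the plan is: (i) exhibit a set $A$ on which the form with the smaller "spread parameter" wins, and (ii) exhibit a set $A'$ on which the form with the larger "generic count" wins. These two parameters need not be monotone in the same direction across the two forms, and that is exactly what produces the desired sign reversal.

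Concretely, for step (i) I would take $A = \{0,1,2,\dots,n-1\}$ an interval. Then $f(A) = \{u_1 a_1 + u_2 a_2\}$ is itself essentially an interval (or a union of a bounded number of residue classes) of length roughly $(u_1 + |u_2|)(n-1)$, so $|f(A)| = (u_1+|u_2|)(n-1) + O(1)$. Hence if $u_1 + |u_2| \neq v_1 + |v_2|$, say $u_1+|u_2| < v_1+|v_2|$, then for $n$ large $|f(A)| < |g(A)|$, and symmetrically a long interval gives the opposite inequality with $f$ and $g$ interchanged — wait, that gives both inequalities from interval sets only if the spread parameters differ, which handles that case immediately. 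For step (ii), in the remaining case $u_1 + |u_2| = v_1 + |v_2|$ but $(u_1,u_2) \neq (v_1,v_2)$: here I would use a set $A'$ that is dilation-structured, e.g. $A' = \{0, 1, N, N+1, N^2, N^2+1, \dots\}$ or a random subset of $\{1,\dots,N\}$ of appropriate density, and compute the leading-order count of $|f(A')|$ versus $|g(A')|$ by a direct combinatorial/collision count; generically the form whose coefficients are "more spread out" (larger $u_1/|u_2|$ ratio, or one coefficient dividing structure) produces more collisions, hence a smaller image, so one of $f,g$ strictly beats the other, and the roles can be reversed by a different choice (e.g. reversing the base-$N$ digit pattern or re-randomizing). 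The bookkeeping needed is: an explicit lower bound $|f(A')| \geq (\text{something})$ via an injectivity argument on carefully chosen coordinates, and a matching upper bound via counting.

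The main obstacle, and the part that will require genuine care rather than routine estimation, is the case $u_1 + |u_2| = v_1 + |v_2|$ with the forms still distinct. One must rule out the possibility that $|f(A)| = |g(A)|$ for \emph{every} finite $A$ — and indeed this can fail (it would fail precisely when $f$ and $g$ are equivalent, which they are not, by the uniqueness of the normalized representative stated above). So the real work is to convert the non-equivalence of $f$ and $g$ into an explicit set witnessing $|f(A)| \neq |g(A)|$, and then to amplify that single witness into a set where the gap has a prescribed sign — for this I expect to need a "tensor power" trick: replace a witness set $A_0$ (with $|f(A_0)| > |g(A_0)|$, say) by $A_0 + M A_0 + M^2 A_0 + \cdots$ for $M$ huge, so that $|f(\cdot)|$ and $|g(\cdot)|$ multiply across the scales and the strict inequality is preserved and magnified; the reverse inequality comes from starting with the complementary witness $A_1$ with $|f(A_1)| < |g(A_1)|$, whose existence is the crux and is obtained by exploiting the asymmetry between the coefficient pairs (e.g. feeding in a set adapted to the $g$-geometry rather than the $f$-geometry). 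Verifying that the multiplicativity is exact (no cross-scale collisions) for $M$ large enough is the one calculation I would actually need to carry out in full, but it is of a standard base-$M$ digit type.
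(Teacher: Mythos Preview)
The paper you are working from is a survey: it states this theorem with a citation to Nathanson--O'Bryant--Orosz--Ruzsa--Silva (Acta Arith.\ \textbf{129} (2007)) and gives no proof whatsoever. So there is no ``paper's own proof'' to compare against here; the argument lives entirely in the cited article.

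Assessing your proposal on its own merits, there is a genuine gap. Your interval computation is correct: for $A=\{0,1,\dots,n-1\}$ one has $|f(A)|=(u_1+|u_2|)\,n+O(1)$, so when $u_1+|u_2|<v_1+|v_2|$ long intervals give $|f(A)|<|g(A)|$. But this is only \emph{one} of the two required inequalities. Your sentence ``symmetrically a long interval gives the opposite inequality with $f$ and $g$ interchanged'' is not a valid step: interchanging $f$ and $g$ just restates the same inequality. When the spreads differ, \emph{every} long interval makes the smaller-spread form lose, so intervals can never supply the witness $A'$ with $|f(A')|>|g(A')|$. That reverse witness is the whole content of the theorem in this case, and your plan does not produce it. (Think of the prototype $f=x_1+x_2$ versus $g=x_1+2x_2$: intervals always make $g$ larger; constructing a set on which the sumset is strictly larger than the dilate-sumset is not automatic.)

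Your tensor/base-$M$ amplification is also circular as written: you invoke it to magnify a witness $A_0$ with $|f(A_0)|>|g(A_0)|$, but the existence of such an $A_0$ is precisely what is in question, and you label it ``the crux'' without giving a construction. The vague appeal to ``a set adapted to the $g$-geometry'' is not a proof. In the actual source paper, this is where the real work happens: one builds explicit sets (carefully placed points, not random sets) tailored to force specific collisions in $g(A')$ that do not occur in $f(A')$, and the case analysis depending on the signs and sizes of $u_2,v_2$ is delicate. Your outline does not contain that idea, so as it stands it would not close to a proof.
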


Consider now linear forms  in more than two variables.  

\begin{pbm}
Let $f(x_1,\dots,x_m)=u_1x_1+\dots+u_mx_m$  and $g(x_1,\dots,x_m)=v_1x_1+\dots+v_mx_m$ be linear forms with nonzero integer coefficients  in $m \geq 3$ variables.  Suppose that $\gcd(u_1,\ldots, u_m) =  \gcd(v_1,\ldots, v_m)  = 1$ and that $g$ cannot be obtained from $f$ by some permutation of the coefficients or by multiplication by $-1$.  
Does there exist a finite set $A$ of integers such that
$\left|f(A)\right|>\left|g(A)\right|$?
\end{pbm}

\begin{pbm}
Let $f(x_1,\dots,x_m)$  and $g(x_1,\dots,x_m)$ be polynomials with integer coefficients in $m \geq 2$ variables.  
Under what conditions does there exist a finite set $A$ of integers such that
$\left|f(A)\right|>\left|g(A)\right|$?
\end{pbm}

\begin{defi}
Let $f$ be an integer-valued function defined on \Z.  Define
$N_f(k)=\min\left\{\left|f(A)\right| : A\subseteq\Z \text{ and } \left|A\right|=k\right\}$.
\end{defi}

\begin{pbm}
Let $f$ be an integer-valued function defined on \Z, for example, a linear form or a polynomial with integer coefficients.  
Determine $N_f(k)$ and describe the structure of the minimizing sets.
\end{pbm}

\begin{thm}[Bukh~\cite{bukh07}]
Let $f(x_1,\dots,x_m)=u_1x_1+\dots+u_mx_m$  be a linear form with nonzero integer coefficients  in $m \geq 2$ variables.  If $\gcd(u_1,\ldots, u_m) =  1$, then 
$N_f(k)=\Big(\left|u_1\right|+\dots+\left|u_2\right|\Big)k-o(k)$
\end{thm}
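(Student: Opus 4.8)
The plan is to establish the two inequalities $N_f(k)\le\big(\sum_i|u_i|\big)k-o(k)$ and $N_f(k)\ge\big(\sum_i|u_i|\big)k-o(k)$ separately; the first is a one‑line construction and the second carries all the content.

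\emph{Upper bound.} First I would take $A=\{0,1,\dots,k-1\}$ and split each coefficient as $u_i=u_i^{+}-u_i^{-}$ with $u_i^{\pm}\ge 0$. Every element of $f(A)$ then lies in the interval $\big[-(k-1)\sum_i u_i^{-},\ (k-1)\sum_i u_i^{+}\big]$, whose length is $(k-1)\sum_i|u_i|$, so $|f(A)|\le\big(\sum_i|u_i|\big)(k-1)+1$ and therefore $N_f(k)\le\big(\sum_i|u_i|\big)k-o(k)$. (Using $\gcd(u_1,\dots,u_m)=1$ one checks that for $k$ large $f(A)$ in fact contains every integer strictly between its two extreme values, so this arithmetic progression is essentially optimal; but only the inequality is needed here.)

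\emph{Lower bound.} Now fix an arbitrary $k$-element set $A\subseteq\Z$; we must show $|f(A)|\ge\big(\sum_i|u_i|\big)k-o(k)$. Since $f(A)=u_1\cdot A+\dots+u_m\cdot A$ is a sum of dilates of the single set $A$, and since $|f(A)|$ is unaffected by translating $A$ or dilating it by a fixed nonzero integer, I would normalize so that $0\in A$ and $\gcd\{a-a':a,a'\in A\}=1$. The argument is then an induction on $m$ that peels off one dilate at a time. Writing $S=u_2\cdot A+\dots+u_m\cdot A$ and $\lambda=|u_1|$, so that $f(A)=u_1\cdot A+S$, the inductive hypothesis gives $|S|\ge\big(\sum_{i\ge 2}|u_i|\big)k-o(k)$. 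Partition $\Z$ into the $\lambda$ residue classes modulo $\lambda$: since $u_1\cdot A\subseteq\lambda\Z$, the class $j+\lambda\Z$ meets $f(A)$ exactly in $u_1\cdot A+S_j$ with $S_j=S\cap(j+\lambda\Z)$, and dividing that class by $\lambda$ turns this into $(\pm1)\cdot A+S_j'$ with $|S_j'|=|S_j|$. Summing over the nonempty classes and using the elementary bound $|X+Y|\ge|X|+|Y|-1$ for finite sets of integers yields
\[
|f(A)|\ \ge\ t\,|A|+|S|-t,
\]
where $t\le\lambda$ is the number of residue classes modulo $\lambda$ occupied by $S$. If $t=\lambda$ this already gives $|f(A)|\ge\lambda k+|S|-\lambda\ge\big(\sum_i|u_i|\big)k-o(k)$. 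The obstruction is that $t$ can be strictly less than $\lambda$: here $t$ equals the size of $u_2\bar A+\dots+u_m\bar A$ inside $\Z/\lambda\Z$ (with $\bar A$ the image of $A$ modulo $\lambda$), so $t<\lambda$ forces $A$ to be concentrated in a few residue classes modulo $\lambda$. In that case one quotients out the common modulus, replacing $A$ by a smaller‑diameter set of the same cardinality and iterating; because $\gcd\{a-a':a,a'\in A\}=1$, this process terminates after a number of steps bounded in terms of $u_1,\dots,u_m$ (although the surviving set shrinks each time), and the ``$-t$'' boundary losses together with the elements discarded into small classes sum to $o(k)$ over the whole recursion. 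Bookkeeping this out gives $|f(A)|\ge\big(\lambda+\sum_{i\ge2}|u_i|\big)k-o(k)=\big(\sum_i|u_i|\big)k-o(k)$.

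\emph{The main obstacle.} The construction and the coset decomposition are routine; the real difficulty, and the entire point of the theorem, is to pin down the \emph{sharp} leading constant $\sum_i|u_i|$ rather than merely a positive multiple of $k$. A naive Cauchy--Davenport recursion bleeds a constant factor at each level, so one must genuinely exploit the coprimality hypothesis to show that the only sets nearly attaining equality are close to arithmetic progressions, on which $f$ is evaluated directly; controlling the case where $A$ is concentrated modulo some $|u_i|$, and choosing the right order in which to peel the dilates, is the delicate part. I also expect the error term to be irreducibly $o(k)$ by this approach and not $O(1)$: obtaining a sharp lower‑order term for general $m$ is a strictly harder problem.
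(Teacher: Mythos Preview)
The paper is a survey: it states Bukh's theorem and cites \cite{bukh07}, but gives no proof whatsoever. There is therefore nothing in the paper to compare your proposal against; any assessment must be of your outline on its own merits (or against Bukh's original argument, which is not reproduced here).

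On those merits: your upper bound is correct and standard. Your lower-bound sketch has the right shape and is in fact close in spirit to what Bukh does---partition by residues modulo $|u_1|$, use $|X+Y|\ge|X|+|Y|-1$ on each occupied class, and recurse when the image of $A$ is concentrated modulo $|u_1|$. But the place where you write ``one quotients out the common modulus, replacing $A$ by a smaller-diameter set of the same cardinality and iterating'' is not a valid step as stated: when $t<\lambda$, the set $A$ need not lie in a single residue class modulo $\lambda$, so there is no single quotient to pass to. One must instead split $A$ into its residue classes and recurse on several pieces simultaneously, and then the total boundary loss is a sum over a tree of subproblems whose depth is \emph{not} a priori bounded in terms of the $u_i$ alone. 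Controlling that accumulated loss to $o(k)$---rather than a constant fraction of $k$---is precisely the content of the theorem, and your outline asserts it rather than proves it. Bukh's actual proof avoids this unbounded recursion by first reducing to dilates by a fixed prime $p$ and arguing via iterated sumsets of the form $A+p\cdot A+p^2\cdot A+\cdots$, which lets him control the error globally; your proposal does not yet contain a mechanism of comparable strength.
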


\begin{thm}
If $f(x_1,x_2)=x_1+x_2$ then $N_f(k)=2k-1$ and the minimizing sets are finite arithmetic progressions.  Equivalently, if $A$ is a finite set of integers, then
$\left|A+A\right|\ge 2\left|A\right|-1$ and
$\left|A+A\right|=2\left|A\right|-1$ if and only if $A$ is an
arithmetic progression.
\end{thm}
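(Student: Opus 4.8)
The plan is to reduce the theorem to the purely set-theoretic claim that every nonempty finite $A\subseteq\Z$ satisfies $|A+A|\ge 2|A|-1$, with equality precisely when $A$ is an arithmetic progression; the two formulations displayed in the theorem are just this claim applied to $f(x_1,x_2)=x_1+x_2$. For the inequality I would write $A=\{a_1<a_2<\cdots<a_k\}$ with $k=|A|$ and exhibit the strictly increasing chain
\[
2a_1 < a_1+a_2 < \cdots < a_1+a_k < a_2+a_k < a_3+a_k < \cdots < a_k+a_k
\]
of $2k-1$ distinct elements of $A+A$, so that $|A+A|\ge 2k-1$. The ``if'' half of the equality statement is a one-line computation: if $A$ is an arithmetic progression with common difference $d$ and $k$ terms, then $A+A$ is an arithmetic progression with common difference $d$ and exactly $2k-1$ terms.

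The substance is the converse, which I would prove by induction on $k=|A|$, the cases $k\le 2$ being immediate. For $k\ge 3$ I would first translate $A$ so that $a_1=0$ (this changes neither $|A+A|$ nor the property of being an arithmetic progression) and set $A'=\{a_1,\dots,a_{k-1}\}$. Since $A'+A'\subseteq[2a_1,2a_{k-1}]$ while $a_{k-1}+a_k$ and $2a_k$ both exceed $2a_{k-1}$ and are distinct,
\[
2k-1=|A+A|\ge |A'+A'|+2\ge\bigl(2(k-1)-1\bigr)+2=2k-1,
\]
so $|A'+A'|=2(k-1)-1$ and, by the inductive hypothesis, $A'$ is an arithmetic progression; thus $A'=\{0,d,2d,\dots,(k-2)d\}$ for some integer $d\ge 1$, and $A'+A'=\{0,d,\dots,(2k-4)d\}$.

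It then remains only to locate $a_k$. Equality in the displayed inequality also forces $A+A=(A'+A')\cup\{a_{k-1}+a_k,\,2a_k\}$, so each of the elements $a_k+jd$ with $0\le j\le k-3$ — every one of them strictly less than both $a_{k-1}+a_k$ and $2a_k$ — must already lie in $A'+A'$. Taking $j=0$ shows $a_k=\ell d$ for an integer $\ell$, and $a_k>(k-2)d$ gives $\ell\ge k-1$; taking $j=k-3$ shows $\ell+(k-3)\le 2k-4$, i.e.\ $\ell\le k-1$. Hence $\ell=k-1$ and $A=\{0,d,2d,\dots,(k-1)d\}$ is an arithmetic progression, closing the induction. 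The step I expect to be the main obstacle is precisely this last one: recovering that $A'$ is a progression is cheap, but to pin down the final term $a_k$ one must use more of the equality hypothesis than $|A'+A'|=2(k-1)-1$ alone — namely that $a_k+A'$ contributes only its largest element as a new sum — so as to exclude a progression with an extra point attached far away; keeping track of the borderline case $k=3$, where the two ``pinning'' conditions coincide, is the only additional bookkeeping.
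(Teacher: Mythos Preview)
The paper is a survey and states this classical result without proof, so there is nothing to compare your argument against. Your proof is correct and is essentially the standard one: the lower bound via the $2k-1$ sums $a_1+a_j$ and $a_i+a_k$, and the characterization of equality by induction, peeling off the largest element and using the inductive hypothesis on $A'=A\setminus\{a_k\}$. The key step you flagged---that equality forces $A+A=(A'+A')\cup\{a_{k-1}+a_k,\,2a_k\}$, so every $a_k+jd$ with $0\le j\le k-3$ already lies in the arithmetic progression $A'+A'$---is exactly what pins down $a_k=(k-1)d$, and your bookkeeping (including the degenerate overlap at $k=3$, where $j=0$ alone suffices since then $2k-4=k-1$) is in order.
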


An \emph{affine transform} of a set $A$ of real numbers is a set obtained from $A$ by a sequence of translations and dilations.

\begin{thm}[Cilleruelo-Silva-Vinuesa~\cite{cill-silv-vanu08}]
If $f(x_1,x_2)=x_1+2x_2$,  then $N_f(k)=3k-2$.  Moreover,
$\left|f(A)\right|=3\left|A\right|-2$ if and only if $A$ is an
arithmetic progression.

If $f(x_1,x_2)=x_1+3x_2$, then $N_f(k)=4k-4$.  Moreover,
$\left|f(A)\right|=4\left|A\right|-4$ if and only if $A$ is $\{0,1,3\}$ or $\{0,1,4\}$ or $\left\{0,3,6,\dots, 3\ell - 3\right\}\cup\left\{1,4,7,\dots, 3\ell - 2 \right\}$, or an affine transform of one of these sets.
\end{thm}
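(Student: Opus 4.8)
The plan is to treat the two linear forms $f(x_1,x_2)=x_1+2x_2$ and $f(x_1,x_2)=x_1+3x_2$ separately, in each case proving first the lower bound $|f(A)| \geq N_f(k)$ for every $k$-element set $A$, and then characterizing the extremal sets. For the lower bound I would exploit a monotonicity trick: order the elements of $A$ as $a_1 < a_2 < \cdots < a_k$ and produce a long strictly increasing chain of elements of $f(A)$. For $f=x_1+2x_2$, the chain
\[
a_1+2a_1 < a_1+2a_2 < a_2 + 2a_2 < a_2+2a_3 < \cdots < a_{k-1}+2a_k < a_k+2a_k
\]
has $2k-1$ terms and shows $|f(A)| \geq 2k-1$; but this is not yet $3k-2$, so I need to squeeze out $k-1$ more elements. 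The idea is that between $a_i + 2a_i$ and $a_i+2a_{i+1}$ one can also insert $a_{i+1}+2a_i$ or $a_i + 2(a_i + 1)$-type elements, or alternatively use a two-dimensional counting argument: map each pair $(a,a')$ to $a+2a'$ and analyze the fibers. A cleaner route for the lower bound is the following dilation argument. The set $f(A) = A + 2A$ contains $A+2a_1$ and $A+2a_k$; more generally one shows $|A+2A| \geq |A| + |2A| - 1 = 2|A| - 1$ is too weak, so instead I would consider the Plünnecke–Ruzsa or Freiman-type inequality for the specific form, or better, directly count using the structure of $2A$ relative to $A$.

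The more robust approach, which I would actually carry out, is Freiman's $3k-4$ type argument adapted to weighted sums. For $|A+2A|$: consider the set $S = A \cup (A+2(a_k-a_1))$, a union of two translates; one has $A+2A \supseteq \bigcup_{a\in A}(A+2a)$, and consecutive translates $A+2a_i$ and $A+2a_{i+1}$ overlap in a controlled way since $2(a_{i+1}-a_i)\geq 2$. Counting $|A+2a_i \setminus (A+2a_1 \cup \cdots \cup A+2a_{i-1})|$ and showing each new translate contributes at least $2$ new elements (the top two, $a_{k-1}+2a_i$ and $a_k+2a_i$, cannot have appeared before because $a_k + 2a_i > a_k + 2a_{i-1} \geq$ everything in earlier translates except possibly... — this needs care) gives $|A+2A| \geq |A+2a_1| + 2(k-1) = k + 2(k-1) = 3k-2$. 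For $f = x_1+3x_2$, the same scheme with gap $3$ between consecutive translates $A+3a_i$ means consecutive translates can be disjoint, so one expects each new translate to contribute more, but the extremal configuration (the interleaved two-progression set) shows the contribution can drop to exactly $4$ on average after the first, yielding $|A+3A|\geq k + 4(k-1) \cdot \tfrac{?}{} $ — the bookkeeping must be arranged to land on $4k-4$, and here the arithmetic is genuinely delicate because the bound is not simply "first translate plus a constant per subsequent translate."

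For the equality cases I would run the usual rigidity analysis: equality in the translate-counting forces every intermediate translate $A+2a_i$ (resp. $A+3a_i$) to contribute the minimum number of new elements, which forces $A+2a_i \setminus \{$top elements$\}$ to be contained in $A+2a_{i-1}$, i.e. $A - 2a_1 \subseteq A - 2a_i + (\text{small set})$ for all $i$, and iterating these containments pins down $A$ up to translation and dilation. For $f=x_1+2x_2$ this should force $A$ to be a single arithmetic progression; for $f=x_1+3x_2$ the looser constraints allow the three listed families $\{0,1,3\}$, $\{0,1,4\}$, and the interleaved double progression $\{0,3,\ldots,3\ell-3\}\cup\{1,4,\ldots,3\ell-2\}$, together with their affine transforms. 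The main obstacle is precisely this rigidity step for the coefficient $3$: unlike the $2$ case, the extremal sets are not arithmetic progressions, so one cannot invoke the clean classification behind Theorem 5 ($|A+A|=2|A|-1$ iff AP); instead one must carefully track how much ``slack'' each translate can have and show that the only ways to achieve zero total slack are the three named families. I expect that step to require a somewhat intricate case analysis on small $k$ (to handle $\{0,1,3\}$ and $\{0,1,4\}$ as sporadic extremal sets) combined with an inductive argument for larger $k$ that peels off the extreme translates $A+3a_1$ and $A+3a_k$ and applies the $k-1$ case — with attention to the fact that removing an endpoint of the interleaved double progression again yields an interleaved double progression (or a sporadic set), which is what makes the induction close.
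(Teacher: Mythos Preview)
The paper does not prove this theorem. This is a survey article, and the result is simply quoted from the Cilleruelo--Silva--Vinuesa preprint~\cite{cill-silv-vanu08} with no argument supplied; there is therefore no ``paper's own proof'' against which to compare your proposal.

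As for the proposal itself, your translate-counting outline for $f(x_1,x_2)=x_1+2x_2$ contains a genuine gap that you half-acknowledge but do not resolve. The claim that each successive translate $A+2a_{i+1}$ contributes at least two new elements to $\bigcup_{j\le i}(A+2a_j)$ is not correct in general: the largest element $a_k+2a_{i+1}$ is certainly new, but the second-largest, $a_{k-1}+2a_{i+1}$, exceeds the previous maximum $a_k+2a_i$ only when $2(a_{i+1}-a_i)>a_k-a_{k-1}$, which can easily fail (take $A$ with a large final gap and small interior gaps). So the naive ``$k+2(k-1)=3k-2$'' count does not go through translate by translate; some translates may contribute only one new element while others contribute three or more, and one needs a global argument to show the total is still at least $3k-2$. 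Neither your monotone chain nor your Pl\"unnecke--Ruzsa remark supplies this, and the ``Freiman $3k-4$ type argument'' is invoked by name only. The same issue is more severe for $x_1+3x_2$, where you yourself note that the bookkeeping ``must be arranged to land on $4k-4$'' without indicating how. In short, the proposal identifies a plausible framework (analyzing the filtration by translates and then running a rigidity argument for equality) but does not contain the key inequality, and since the survey gives no hint either, you would need to consult~\cite{cill-silv-vanu08} for the actual mechanism.
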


\begin{defi}
Let $\mathcal{U}=(u_1,\dots,u_m)$ be a sequence of positive integers.  
A \emph{subsequence sum} of $\mathcal{U}$ is a nonnegative integer of the form $\sum_{i\in I}u_i$, where $I$ is a subset of $\left\{1,\dots,m\right\}$. 
Let $S(\mathcal{U})=\left\{\sum_{i\in
I}u_i :   I \subseteq\left\{1,\dots,m\right\}\right\}$ denote the
set of all subsequence sums of the sequence $\mathcal{U}$.
\end{defi}

 A subsequence sum is 0 if and only if $I = \emptyset.$  
If $\mathrm{U}=u_1+\dots+u_m$, then 
$S(\mathcal{U})\subseteq[0,\mathrm{U}]$.

\begin{defi}
The sequence $\mathcal{U}$ is called \emph{complete} if
$S(\mathcal{U})=[0,\mathrm{U}]$.
\end{defi}

For example, the sequences $(1,1)$ and $(1,2)$ are complete but $(1,3)$ is not complete.

\begin{thm}[Nathanson~\cite{nath08b}]
Let $\mathcal{U}=(u_1,\dots,u_m)$ be a complete sequence of positive integers, and  let
$f(x_1,\dots,x_m)=u_1x_1+\dots+u_mx_m$ be the associated linear form.  If  $U=u_1+\dots+u_m$, then $N_f(k)=Uk - U+1$  for all positive integers $k$.  Moreover, $\left|A\right|=k$ and $\left|f(A)\right|=N_f(k)$ if and only if $A$
is an arithmetic progression of length $k$.
\end{thm}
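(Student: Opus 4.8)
The plan is to establish the formula $N_f(k)=Uk-U+1$ by matching upper and lower bounds, and then to obtain the characterization of minimizers by induction on $k$, peeling off the extreme points of $A$.

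\textbf{Upper bound.} I would first check that $f(\{0,1,\dots,k-1\})=\{0,1,\dots,U(k-1)\}$. The inclusion ``$\subseteq$'' is immediate. For ``$\supseteq$'', given an integer $n$ with $0\le n\le U(k-1)$ write $n=qU+\rho$ with $0\le\rho\le U-1$ and $0\le q\le k-1$; if $q=k-1$ then $\rho=0$ and $n=f(k-1,\dots,k-1)$, while if $q\le k-2$ then completeness yields $I\subseteq\{1,\dots,m\}$ with $\sum_{i\in I}u_i=\rho$, and taking $x_i=q+1$ for $i\in I$ and $x_i=q$ otherwise gives $f(x_1,\dots,x_m)=qU+\rho=n$ with every $x_i\in\{0,\dots,k-1\}$. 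Hence this progression has $|f(A)|=U(k-1)+1=Uk-U+1$; since an arbitrary arithmetic progression of length $k$ is $a+t\{0,\dots,k-1\}$ and $|f(a+tB)|=|f(B)|$, every such progression attains $Uk-U+1$, so $N_f(k)\le Uk-U+1$.

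\textbf{Lower bound.} The key step is the inequality
\[
|f(A)|\ge|f(A\setminus\{a_k\})|+U
\]
whenever $A$ is finite with $|A|\ge2$, $a_k=\max A$, and $a_{k-1}$ is the second largest element of $A$. To see it, note $f(A\setminus\{a_k\})\subseteq f(A)$ and $\max f(A\setminus\{a_k\})=Ua_{k-1}$; moreover, for each $j$ with $1\le j\le U$ completeness gives $I$ with $\sum_{i\in I}u_i=j$, and putting $x_i=a_k$ for $i\in I$ and $x_i=a_{k-1}$ otherwise exhibits the value $Ua_{k-1}+j(a_k-a_{k-1})$ in $f(A)$; these $U$ values are distinct and strictly larger than $Ua_{k-1}$, hence lie outside $f(A\setminus\{a_k\})$. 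Starting from a one-element set and iterating, $|f(A)|\ge1+(k-1)U$, so $N_f(k)\ge Uk-U+1$ and therefore $N_f(k)=Uk-U+1$.

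\textbf{Characterization of the minimizers.} I would argue by induction on $k$, the cases $k\le2$ being trivial. Let $k\ge3$, $|A|=k$, $|f(A)|=N_f(k)$. Combining the key inequality with $|f(A\setminus\{a_k\})|\ge N_f(k-1)$ forces $|f(A\setminus\{a_k\})|=N_f(k-1)$, so by the inductive hypothesis $A\setminus\{a_k\}$ is an arithmetic progression of length $k-1$; applying the same to $-A$ (using $f(-A)=-f(A)$) shows $A\setminus\{\min A\}$ is an arithmetic progression of length $k-1$ as well. If $k\ge4$, these two progressions overlap in the $k-2\ge2$ consecutive terms $a_2,\dots,a_{k-1}$, so they share a common difference $d$, whence every consecutive gap of $A$ equals $d$ and $A$ is an arithmetic progression. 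If $k=3$, translate so that $A=\{0,d,d+e\}$ with integers $d,e\ge1$. Equality in the key step forces $f(A)=f(\{0,d\})\cup\{Ud+je:1\le j\le U\}$, and completeness gives $f(\{0,d\})=\{0,d,2d,\dots,Ud\}$, so $f(A)=\{0,d,\dots,Ud\}\cup\{Ud+e,\dots,Ud+Ue\}$. Now $d+e\in f(A)$ (take the coefficient vector supported on a single index $i$ with $u_i=1$, which exists since $1\in S(\mathcal{U})$); since $U\ge2$ the number $d+e$ is smaller than $Ud+e$ and hence cannot lie in the second block, so it must be a multiple of $d$, i.e.\ $d\mid e$. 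Running the identical argument on the reversed progression $-A$, which up to translation is $\{0,e,d+e\}$, gives $e\mid d$. Hence $d=e$ and $A=\{0,d,2d\}$ is an arithmetic progression of length $3$, completing the induction.

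\textbf{Main obstacle.} The value of $N_f(k)$ is routine once one has the ``peel off the largest element'' inequality. The delicate point is the base case $k=3$ of the characterization: it is not produced by the induction and instead requires pinning down $f(A)$ exactly and exhibiting the auxiliary element $d+e$, together with the small-$U$ bookkeeping. One should also note that the statement tacitly assumes $m\ge2$ (equivalently $U\ge2$): for $m=1$ the only complete sequence is $(1)$, $f(A)=A$, and every $k$-element set minimizes, so the characterization fails there.
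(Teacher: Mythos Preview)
The survey paper does not include a proof of this theorem; it merely states the result and cites the original paper~\cite{nath08b}, so there is no in-paper argument to compare against. Your proof is correct: the upper bound via $f(\{0,\dots,k-1\})=[0,U(k-1)]$, the lower bound via the ``peel off the maximum'' inequality $|f(A)|\ge|f(A\setminus\{a_k\})|+U$ (which uses completeness exactly where it should), and the inductive characterization of minimizers all go through. The $k=3$ step is handled cleanly: equality pins down $f(A)$ explicitly, the existence of an index with $u_i=1$ (forced by $1\in S(\mathcal U)$) places $d+e$ in $f(A)$, and the two-block description then yields $d\mid e$ and, by reflection, $e\mid d$. Your caveat that the uniqueness statement tacitly requires $m\ge 2$ (equivalently $U\ge 2$) is well taken.
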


There is also the dual problem of describing the finite sets of integers whose images under linear maps are large.

\begin{defi}
Let $f$ be an integer-valued function defined on \Z.  Define
$M_f(k)=\max\left\{\left|f(A)\right| : A\subseteq\Z,\;\left|A\right|=k\right\}$.
\end{defi}

\begin{pbm}
Let $f$ be an integer-valued function in $m$ variables defined on \Z.  Determine $M_f(k)$ and describe the structure of the maximizing sets.
For what functions $f$ is $M_f(k) < k^m$?
\end{pbm}

\section{The fundamental theorem of additive number theory}

Let $A=\{a_0<a_1<\ldots<a_{k-1}\}$ be a finite set of integers. 
Consider the shifted set $A'=A-\{a_0\} =\{0<a_1-a_0<\ldots<a_{k-1}-a_0\}$.  
Let $d=\gcd\{a_i-a_0\ :\ i=1,\ldots,k-1\}$, and construct the set 
\[
A^{(N)}=\frac{1}{d}\ast A'=\left\{0<\frac{a_1-a_0}{d}<\ldots<\frac{a_{k-1}-a_0}{d}\right\}.
\]
If $hA$ is the $h$-fold sumset of $A$, then $hA'=hA-\{ha_0\}$ and  
\[
hA^{(N)}=\frac{1}{d}\ast hA'=\frac{1}{d} \ast (hA-\{ha_0\}).
\]
In particular,  $|hA|=|hA^{(N)}|$.

The set $A^{(N)}$ is called the \emph{normalized} form of the set $A$.  In general, a finite set $A$ of integers is normalized if $A=\{0\}$, or if $|A| \geq 2,$  $\min(A)=0,$ and $\gcd(A)=1.$

The following result is often called the Fundamental Theorem of Additive Number Theory.

\begin{thm}[Nathanson~\cite{nath72e}]
Let $A=\{0<a_1<\ldots<a_{k-1}\}$ be a normalized finite set of integers. There exist a positive  integer $h_0$, nonnegative integers $C$ and $D$, and finite sets $\mathcal{C} \subseteq[0,C-2]$ and $\mathcal{D}\subseteq [0,D-2]$ such that
\[
hA=\mathcal{C} \cup [C, ha_{k-1}-D]\cup (\{ha_{k-1} \} -\mathcal{D})
\]
for all $h\geq h_0.$
\end{thm}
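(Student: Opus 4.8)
The plan is to view $hA$ as a truncation of the numerical semigroup $\langle A\rangle=\bigcup_{h\ge 0}hA$ generated by $A$, and to exploit the reflection symmetry $n\in hA\iff ha_{k-1}-n\in hA''$, where $A''=a_{k-1}-A=\{0<a_{k-1}-a_{k-2}<\dots<a_{k-1}-a_1<a_{k-1}\}$ is again normalized with largest element $a_{k-1}$. Since $\gcd(A)=1$, the semigroup $\langle A\rangle$ is cofinite in $\N_0$; let $F$ be its Frobenius number (the largest integer not in $\langle A\rangle$, with $F=-1$ if $\langle A\rangle=\N_0$) and let $F''$ be the Frobenius number of $\langle A''\rangle$. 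Because $0\in A$ we have $hA\subseteq(h+1)A$, so each $n\in\langle A\rangle$ has a well-defined \emph{order} $h_n=\min\{h:n\in hA\}$, and crucially $n\in hA$ if and only if $h_n\le h$.

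The engine of the proof is the estimate $h_n=n/a_{k-1}+O(1)$ for $n\in\langle A\rangle$. The bound $h_n\ge n/a_{k-1}$ is immediate, since every term of a representation is at most $a_{k-1}$. For the reverse bound, given $n\ge F+1$ write $n=qa_{k-1}+r$ with $q\ge 0$ and $r\in[F+1,\,F+a_{k-1}]$; then $r$ is representable, $h_r\le H:=\max\{h_s:F+1\le s\le F+a_{k-1}\}$, and concatenating $q$ copies of $a_{k-1}$ with a shortest representation of $r$ gives $h_n\le q+h_r\le n/a_{k-1}+H$. Applying the same reasoning to $A''$ produces a constant $H''$ with $h''_m\le m/a_{k-1}+H''$ for all $m\ge F''+1$, where $h''_m$ denotes the order with respect to $A''$.

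I would then, for $h$ large, split $[0,ha_{k-1}]$ into three parts. On $[0,F]$: an $n\le F$ in $\langle A\rangle$ lies in $hA$ as soon as $h\ge h_n$, while an $n\le F$ outside $\langle A\rangle$ never lies in $hA$; since this interval is finite, $hA\cap[0,F]=\langle A\rangle\cap[0,F]$ for all $h$ beyond a threshold, and this fixed set lies in $[0,F-1]$ because $F\notin\langle A\rangle$. Reflecting the $A''$-analogue of this statement gives, for $h$ large, $hA\cap[ha_{k-1}-F'',\,ha_{k-1}]=\{ha_{k-1}\}-\bigl(\langle A''\rangle\cap[0,F'']\bigr)$, a reflected copy of a fixed set lying in $\{ha_{k-1}\}-[0,F''-1]$. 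On the middle range $F+1\le n\le ha_{k-1}-F''-1$: if $n\le a_{k-1}(h-H)$ the head estimate gives $h_n\le h$, so $n\in hA$; otherwise $m:=ha_{k-1}-n\ge F''+1$ and (for $h$ large) $n\ge a_{k-1}H''$, so the tail estimate gives $h''_m\le h$, whence $m\in hA''$ and again $n\in hA$. For $h\ge h_0$ one has $a_{k-1}(h-H)+1\ge a_{k-1}H''$, so these two sub-ranges together cover $[F+1,\,ha_{k-1}-F''-1]$, which therefore lies entirely in $hA$. Taking $C=F+1$, $D=F''+1$, $\mathcal C=\langle A\rangle\cap[0,F]$, and $\mathcal D=\langle A''\rangle\cap[0,F'']$ assembles the three parts into the asserted identity.

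The delicate point is the middle range, and in particular the decision to control $h_n$ through the \emph{largest} generator $a_{k-1}$ rather than the smallest: the obvious bound $h_n\le n/a_1+O(1)$ only reaches up to about $a_1h$, which falls well short of $ha_{k-1}$ once $k\ge 3$, so the head estimate and its reflection would leave a growing gap unfilled. Expressing $n$ in blocks of size $a_{k-1}$, together with the mirror statement for $A''$, is precisely what forces the head-covered and tail-covered sub-ranges to jointly exhaust $[F+1,\,ha_{k-1}-F''-1]$ for all large $h$; verifying that covering inequality and bundling the finitely many thresholds (from the two Frobenius-number regions and from the overlap condition) into a single $h_0$ is the only bookkeeping that needs care.
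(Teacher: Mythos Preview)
Your proof is correct. The survey paper does not actually prove this theorem---it only states it and cites the original 1972 article---so there is no in-paper argument to compare against line by line. That said, the discussion immediately following the theorem in the paper singles out exactly the two structural ingredients you use: the semigroup $\Sigma(A)=\bigcup_h hA$ with Frobenius number $C-1$, and the reflected set $A^{\sharp}=\{a_{k-1}\}-A$ with Frobenius number $D-1$. Your argument is the standard one and matches that framing precisely: identify $C=F+1$, $D=F''+1$, $\mathcal{C}=\langle A\rangle\cap[0,F]$, $\mathcal{D}=\langle A''\rangle\cap[0,F'']$, and use the order estimate $h_n\le n/a_{k-1}+H$ together with its reflection to fill the middle interval. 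The bookkeeping (writing $n=qa_{k-1}+r$ with $r\in[F+1,F+a_{k-1}]$, checking $F\notin\langle A\rangle$ so that $\mathcal{C}\subseteq[0,C-2]$, and verifying the head- and tail-covered sub-ranges overlap once $h\ge H+H''$) is all sound.
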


If $A$ is a set of nonnegative integers that contains 0, then 
\[
A \subseteq 2A \subseteq \cdots \subseteq hA \subseteq (h+1)A \subseteq \cdots
\]
for all $h \geq 1,$  and the set 
\[
\Sigma(A) = \bigcup_{h=1}^\infty hA
\]
is the additive subsemigroup of the nonnegative integers generated by the set $A$.  
The fundamental theorem implies that if $\gcd(A)=1,$ then 
$$
\Sigma(A)  = \mathcal{C}\cup
[C,\infty).$$  
The number $C-1$ is the largest integer that
cannot be represented as a nonnegative integral linear combination
of $a_1,\ldots,a_{k-1}$. This is called the \emph{Frobenius number} of the set $A$.  Note that $D-1$ is the Frobenius number of the symmetric normalized set
\[
A^{\sharp} = \{a_{k-1}\} - A = \{0  < a_{k-1} - a_{k-2} < \cdots < a_{k-1} - a_{1} < a_{k-1}  \}.
\]
Also, since $\Sigma(A)$ is a semigroup, it follows that if $u$ and $v$ are nonnegative integers with $u+v=C-1,$ then either $u\notin \Sigma(A)$ or $v \notin \Sigma(A).$  Therefore, 
\[
\left| \mathbf{N}_0 \setminus \Sigma(A) \right| = \left|[0,C-1]\setminus \mathcal{C} \right| \geq \frac{C}{2}.
\]

\begin{pbm}
Let $A$ be a normalized finite set of nonnegative integers.  Compute $\mathbf{N}_0 \setminus \Sigma(A),$ that is, the set of numbers that cannot be represented as nonnegative  integral linear combinations of the elements of $A$.  
\end{pbm}

\section{Thin asymptotic bases}

Let $A$ be an infinite set of nonnegative integers that is an asymptotic basis of order $h.$   There is a nonnegative integer $n_0$ such that, if $n_0 \leq n\leq x$, then there exist $a_1,\ldots, a_h \in A$ with $n=a_1+\cdots+a_h$ and  $0\leq a_i\leq n\leq x$ for $i=1,\ldots, h.$  
Denote by $A(x)$ the counting function of the set $A$.  
Since the interval $[n_0,x]$ contains at least $x-n_0$ nonnegative integers, it follows that $(A(x)+1)^h\geq x-n_0$ and so 
\[
A(x)\gg x^{1/h}
\]
for every asymptotic basis $A$ of order $h$.

\begin{defi}
An asymptotic basis $A$ of order $h$ is called \emph{thin} if $A(x)\ll x^{1/h}$.
\end{defi}

Thin asymptotic bases exist, and the first explicit examples were constructed independently by Chartrovsky, Raikov,  and St\" ohr in the 1930s.  Thin bases of order $h$ have the property that their counting functions have order of magnitude $x^{1/h}.$   Cassels constructed a family of bases of order $h$ whose counting functions are asymptotic to $\lambda x^{1/h}$ for some positive real number $\lambda.$

\begin{thm}[Cassels~\cite{cass57}]
For every $h\geq 2,$ there exist strictly increasing sequences $A=\{a_n\}_{n=1}^\infty$ of nonnegative integers such that $hA=\N_0$ and $a_n=\lambda n^h+O\left(n^{h-1} \right)$ for some $\lambda > 0.$
\end{thm}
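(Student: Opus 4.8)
The plan is to build $A$ from a positional number system, distributing the digit positions among $h$ residue classes modulo $h$ so that every nonnegative integer splits canonically into a sum of $h$ pieces, one from each class; the substance of the proof is then to calibrate this construction so that the counting function of $A$ is not merely of order $x^{1/h}$ but asymptotic to a fixed multiple of $x^{1/h}$ with bounded error. For the skeleton, fix a base $q\ge 2$ and write each $n$ in base $q$, say $n=\sum_{i\ge 0}c_i(n)q^i$ with $0\le c_i(n)\le q-1$. For $j=0,1,\dots,h-1$ let
\[
B_j=\Big\{\textstyle\sum_{i\equiv j\ (\mathrm{mod}\ h)}c_iq^i\ :\ 0\le c_i\le q-1\Big\}
\]
be the set of integers whose base-$q$ digits are supported on the positions congruent to $j$ modulo $h$. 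Splitting the digits of $n$ according to the residue of their position exhibits $n$ as a sum of one element from each $B_j$, so $\N_0=B_0+B_1+\cdots+B_{h-1}$. Taking $A=B_0\cup B_1\cup\cdots\cup B_{h-1}$ gives $hA\supseteq B_0+\cdots+B_{h-1}=\N_0$, hence $hA=\N_0$, and $0\in A$, so $A$ is an infinite strictly increasing basis of order $h$.

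The next step is to check that this $A$ is already thin. Since $B_j=q^jB_0$, we have $B_j(x)=B_0(\lfloor x/q^j\rfloor)\le B_0(x)$, so $B_0(x)\le A(x)\le h\,B_0(x)$; and counting the elements $\sum_{i<k}c_{hi}q^{hi}$ of $B_0$ with $q^{hk}$ comparable to $x$ gives $B_0(x)\asymp x^{1/h}$, whence $A(x)\asymp x^{1/h}$. This recovers the Raikov--St\"ohr--Chartrovsky existence theorem for thin bases of order $h$, but it does not give Cassels' refinement. Inverting the counting function shows that $a_n=\lambda n^h+O(n^{h-1})$ for some $\lambda>0$ is equivalent to $A(x)=c\,x^{1/h}+O(1)$ with $c=\lambda^{-1/h}$, which is a far more rigid requirement. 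The fixed-base construction does not meet it: as $x$ runs over the interval $[q^{hk},q^{h(k+1)})$, the set $B_0$ acquires its new elements in an arithmetic progression of common difference $q^{hk}$, so $B_0(x)$ grows proportionally to $x\,q^{-(h-1)k}$ rather than to $x^{1/h}$, and the ratio $B_0(x)/x^{1/h}$ oscillates over the fixed range $[1,q^{(h-1)/h}]$ without converging.

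To obtain the sharp asymptotic one passes to a more elaborate variant of the same digit idea. I would replace the fixed radix by a slowly varying mixed-radix expansion and, inside each scale interval $[P_k,P_{k+1})$, superimpose several translated copies of the ``class-$0$'' set at offsets spaced so that the new elements of $A$ thin out across the interval at the rate $x^{1/h-1}$, forcing the combined counting function to interpolate, to within $O(1)$, the values $c\,P_k^{1/h}$ prescribed at the scale boundaries. Two things then require care: first, that the superposition still yields $hA=\N_0$, i.e., that every $n$ retains a representation using one summand from each ingredient set; and second, the crucial point, that the boundary discrepancies incurred at the successive scales telescope to $O(1)$ rather than accumulating with $k$. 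Keeping $A(x)-c\,x^{1/h}$ bounded uniformly in $x$ while preserving the exact covering property is the heart of Cassels' argument, and that error analysis is the step I expect to be the main obstacle.
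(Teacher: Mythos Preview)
The paper does not contain a proof of this theorem: it is a survey, and Cassels' result is merely stated with a citation to~\cite{cass57}. There is therefore no ``paper's own proof'' to compare your proposal against.

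As for the proposal itself: what you have written is not a proof of Cassels' theorem but an honest outline that stops short of it. You correctly reproduce the classical Raikov--St\"ohr digit construction and correctly diagnose why it yields only $A(x)\asymp x^{1/h}$ rather than $A(x)=c\,x^{1/h}+O(1)$; your observation that $B_0(x)/x^{1/h}$ oscillates over $[1,q^{(h-1)/h}]$ is the right reason. But the final paragraph is a wish, not an argument: you propose ``a slowly varying mixed-radix expansion'' with ``several translated copies'' at each scale, and then concede that controlling the accumulated boundary error to $O(1)$ while preserving $hA=\N_0$ is ``the main obstacle'' you have not resolved. That obstacle is the entire content of the theorem beyond the classical thin-basis existence result, so at present you have proved the Raikov--St\"ohr theorem and restated Cassels' as a goal. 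To turn this into a proof you would need either to carry out the mixed-radix error analysis explicitly---specifying the radices, the offsets, and the bookkeeping that keeps the discrepancy bounded---or to consult Cassels' original construction, which proceeds along rather different lines from the digit-splitting idea.
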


\begin{pbm}[Cassels~\cite{cass57}]
Let $h\geq 2.$  Does there exist an asymptotic basis $A=\{a_n\}_{n=1}^\infty$ of order $h$ such that $a_n=\lambda n^h+o\left(n^{h-1}\right)$ for some $\lambda > 0$?
\end{pbm}

\begin{defi}
A positive real number $\lambda$ will be called an \emph{additive eigenvalue of order $h$} if there exists an asymptotic basis $A =\{a_n\}_{n=1}^\infty$ of order $h$ such that 
\[
a_n \sim \lambda n^h.
\]
We define the \emph{additive spectrum $\Lambda_h$} as the set of all additive eigenvalues of order $h$.
\end{defi}

\begin{thm}[Nathanson~\cite{nath08f}]
For every integer $h \geq 2,$ there is a number $\lambda_h^*$ such that $\Lambda_h = (0, \lambda_h^*)$ or $\Lambda_h = (0, \lambda_h^*].$ 
\end{thm}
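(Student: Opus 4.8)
The plan is to establish three properties of the set $\Lambda_h$: that it is nonempty, that it is bounded above, and that it is \emph{downward closed}, meaning that if $\lambda\in\Lambda_h$ and $0<\mu<\lambda$ then $\mu\in\Lambda_h$. Granting these, set $\lambda_h^*=\sup\Lambda_h$; then $\lambda_h^*$ is positive (by nonemptiness) and finite (by the upper bound), and downward closure together with the definition of the supremum gives $(0,\lambda_h^*)\subseteq\Lambda_h\subseteq(0,\lambda_h^*]$. The two alternatives in the theorem correspond to whether or not the supremum is attained, i.e.\ whether $\lambda_h^*\in\Lambda_h$.

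Nonemptiness is immediate from Cassels' theorem quoted above: a sequence $A=\{a_n\}$ with $hA=\N_0$ and $a_n=\lambda n^h+O(n^{h-1})$ satisfies $a_n\sim\lambda n^h$ and is an asymptotic basis of order $h$, so $\lambda\in\Lambda_h$. For the upper bound I would use the elementary counting inequality for asymptotic bases recalled above: if $A$ is an asymptotic basis of order $h$ then $(A(x)+1)^h\ge x-n_0$ for all large $x$, hence $A(x)\ge x^{1/h}(1+o(1))$. A routine computation shows that $a_n\sim\lambda n^h$ holds if and only if $A(x)\sim\lambda^{-1/h}x^{1/h}$ (pass between the two using $a_n\le x<a_{n+1}$ and $a_{n+1}/a_n\to 1$; altering finitely many elements of $A$, and in particular whether or not $0\in A$, changes neither asymptotic). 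Comparing the two estimates forces $\lambda^{-1/h}\ge 1$, i.e.\ $\lambda\le 1$, so $\Lambda_h\subseteq(0,1]$ and $\lambda_h^*\le 1$. (Counting unordered $h$-tuples instead gives the sharper bound $\lambda_h^*\le 1/h!$, but it is not needed here.)

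The substantive step is downward closure. Let $\lambda\in\Lambda_h$ be witnessed by $A=\{a_n\}$, so $A(x)\sim\alpha x^{1/h}$ with $\alpha=\lambda^{-1/h}$, and fix $\mu$ with $0<\mu<\lambda$, so that $\beta:=\mu^{-1/h}>\alpha$. Since $A(x)\sim\alpha x^{1/h}$, in particular $A(x)=o(x)$, so the positive integers not in $A$, listed in increasing order as $m_1<m_2<\cdots$, satisfy $m_k\sim k$. Define $C=\{c_n\}_{n\ge 1}$ by $c_n=m_{k_n}$, where $k_n=\lfloor(\beta-\alpha)^{-h}n^h\rfloor$. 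Then $C\subseteq\N_0\setminus A$, the $c_n$ are strictly increasing, and $c_n\sim(\beta-\alpha)^{-h}n^h$, so $C(x)\sim(\beta-\alpha)x^{1/h}$. Now put $B=A\cup C$. Because $B\supseteq A$ and $A$ is an asymptotic basis of order $h$, we have $hA\subseteq hB\subseteq\N_0$ with $\N_0\setminus hA$ finite, hence $\N_0\setminus hB$ is finite and $B$ is an asymptotic basis of order $h$. Since $A$ and $C$ are disjoint, $B(x)=A(x)+C(x)\sim\alpha x^{1/h}+(\beta-\alpha)x^{1/h}=\mu^{-1/h}x^{1/h}$, which by the equivalence above is the same as $b_n\sim\mu n^h$. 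Therefore $\mu\in\Lambda_h$, which proves downward closure and hence the theorem.

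I do not anticipate a genuine obstacle; the only points requiring care are the two routine analytic passages — between the normalizations $a_n\sim\lambda n^h$ and $A(x)\sim\lambda^{-1/h}x^{1/h}$, and the verification that the greedy choice of $C$ realizes the target counting function $(\beta-\alpha)x^{1/h}$ exactly while staying disjoint from $A$ (which works precisely because the thinness estimate $A(x)\ll x^{1/h}$ leaves almost all positive integers available for $C$). The structural reason the spectrum is a one-sided interval is the asymmetry that one may always enlarge an asymptotic basis, thereby decreasing its eigenvalue, but cannot in general shrink one; this is why $\Lambda_h$ reaches all the way down to $0$ rather than being a two-sided interval, and why only the behavior at the upper endpoint $\lambda_h^*$ remains in doubt.
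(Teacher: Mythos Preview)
Your proposal is correct and follows essentially the same approach as the paper's sketch: prove $\Lambda_h$ is downward closed by adjoining nonnegative integers to an eigenvalue-$\lambda$ basis to obtain an eigenvalue-$\lambda'$ basis for any $0<\lambda'<\lambda$, and combine this with nonemptiness (Cassels) and boundedness above (the counting inequality already recalled in the paper). One trivial point to tidy: your indices $k_n=\lfloor(\beta-\alpha)^{-h}n^h\rfloor$ need not be strictly increasing for small $n$, but discarding finitely many terms (or starting at a large enough $n$) fixes this without affecting the asymptotic $C(x)\sim(\beta-\alpha)x^{1/h}$.
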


The idea of the proof is to show that if $A$ is an asymptotic basis of order $h$ with eigenvalue $\lambda,$ and if $0 < \lambda' < \lambda,$ then one can adjoin nonnegative integers to the set $A$ to obtain an asymptotic basis of order $h$ with eigenvalue $\lambda'.$   Thus, $\Lambda_h$ is an interval.  Combinatorial and geometric arguments show that the additive spectrum is bounded above.

\begin{pbm}
Compute the upper bound $\lambda_h^*$ of the additive spectrum $\Lambda_h.$  Is this upper bound an eigenvalue?
\end{pbm}

\section{Minimal asymptotic bases}

The set $A$ of nonnegative integers is an asymptotic basis of order $h$ if every sufficiently large integer is the sum of exactly $h$ elements of $A$.

\begin{defi}
An asymptotic basis $A$ of order $h$  is \emph{minimal} if, for every element $a^*\in A,$ the set $A\setminus\{a^*\}$ is not an asymptotic basis of order $h$.
\end{defi}

Thus, if $A$ is a minimal asymptotic basis of order $h,$ then for every integer $a^* \in A$ there are infinitely many positive integers $n$ that cannot be represented as the sum of $h$ elements of the set $A\setminus\{a^*\}$.  Equivalently, every element of $A$  is somehow ``responsible'' for the representation of infinitely many integers.

\begin{thm}[H\" artter~\cite{hart56}, Nathanson~\cite{nath74b}]
For every $h \geq 2$ there exist minimal asymptotic bases of order $h$.  
\end{thm}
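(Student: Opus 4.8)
The plan is to construct, for each $h \geq 2$, an explicit minimal asymptotic basis of order $h$ by a direct combinatorial argument based on a base-$2$ (or base-$(h+1)$) digital expansion. The model case $h = 2$ is instructive: write each nonnegative integer uniquely in binary, and let $A$ be the set of integers all of whose binary digits occur in even-indexed positions together with the set whose digits occur in odd-indexed positions. More precisely, fix a partition $\N_0 = S_0 \cup S_1$ of the digit positions into two infinite sets, and let $A$ consist of all integers whose binary support lies entirely in $S_0$ or entirely in $S_1$. Every integer $n$ then decomposes as $n = n_0 + n_1$ where $n_j$ carries the digits of $n$ in positions $S_j$, so $A$ is a basis of order $2$; and for general $h$ one refines this to a partition into $h$ (or more) infinite pieces and an argument tracking digit sums so that no carries interfere. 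The key point for the basis property is that the decomposition is essentially unique, which is what will force minimality.

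The main steps I would carry out are: (1) fix the digital framework and the partition of digit positions, and verify that every sufficiently large $n$ (indeed every $n$) is a sum of $h$ elements of $A$, so $A$ is an asymptotic basis of order $h$; (2) for a chosen element $a^* \in A$, exhibit infinitely many integers $n$ whose \emph{every} representation $n = b_1 + \cdots + b_h$ with $b_i \in A$ must use $a^*$ — the natural candidates are integers $n$ whose binary expansion, when split across the digit blocks, forces one summand to equal $a^*$ exactly because the digits of $a^*$ appear isolated in a block that no other element of $A$ can supply without creating a carry; (3) conclude that $A \setminus \{a^*\}$ fails to be an asymptotic basis of order $h$, for every $a^* \in A$, hence $A$ is minimal. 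Step (2) is where one must be careful to avoid carries: choosing $n$ with widely spaced digit patterns, or working in base $h+1$ instead of base $2$ so that sums of $h$ digits stay below the base, cleanly eliminates carrying and makes the uniqueness of the decomposition transparent.

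The hard part will be step (2): ensuring that the forcing is genuine, i.e.\ that for each $a^*$ there really are \emph{infinitely many} witnesses $n$ and that no alternative representation sneaks in by redistributing digits among the other $h-1$ summands or by exploiting a carry. This is a purely combinatorial bookkeeping obstacle about digital representations, and it is handled by the base-$(h+1)$ device mentioned above plus a careful choice of which integers to use as witnesses — one wants $n$ to look, in the relevant block of digits, exactly like $a^*$ and to look like a ``fresh'' pattern elsewhere, so that the block containing $a^*$'s digits can only be accounted for by the single element $a^*$. Once carries are ruled out, the representation of $n$ as a sum of $h$ elements of $A$ is unique up to permutation, and removing $a^*$ destroys it; varying the ``elsewhere'' pattern over infinitely many choices gives infinitely many such $n$, completing the proof. (H\"artter's and Nathanson's original constructions differ in detail — one can also proceed via a greedy/inductive construction interleaving ``basis-forcing'' and ``minimality-forcing'' steps — but the digital construction is the cleanest to present.)
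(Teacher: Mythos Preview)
The paper is a survey and does \emph{not} supply a proof of this theorem; it simply states the result and cites H\"artter~\cite{hart56} and Nathanson~\cite{nath74b}. There is therefore no ``paper's own proof'' to compare your proposal against.

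That said, your proposal is in the spirit of the construction in Nathanson~\cite{nath74b}: one partitions the digit positions (in a suitable base) into $h$ infinite classes, takes $A$ to be the union of the sets $A_j$ of integers supported on the $j$th class, and argues that the resulting $A$ is a basis of order $h$ in which each element is indispensable. Your identification of step~(2) as the delicate point is accurate: the obstacle is precisely that a representation $n=b_1+\cdots+b_h$ need not have one $b_i$ in each $A_j$, and sums of several elements from the same $A_j$ can produce carries into foreign digit positions, so the ``unique decomposition'' intuition is not automatic. Your proposed remedies (working in base $h+1$, or spacing digits to suppress carries) are the standard ways to tame this, but as written the proposal stops short of actually executing the forcing argument---you would still need to specify the witnesses $n$ for a given $a^*$ and verify, case by case over the possible distributions of the $b_i$ among the $A_j$, that no carry-based alternative representation exists. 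This is doable and is essentially what the cited references carry out, but it is genuine work, not just bookkeeping; a referee would ask you to write it out.
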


On the other hand, it is not true that every asymptotic basis of order $h$ contains a subset that is a minimal asymptotic basis of order $h$.
In particular, we have the following result.

\begin{thm}[Erd\H os-Nathanson~\cite{nath75e}]
There exists an asymptotic basis A of order 2 such that, for every
subset $S\subseteq A$, the set $A\setminus S$  is an asymptotic basis of order 2 if and only if  $S$ is finite.
\end{thm}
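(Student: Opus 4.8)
The plan is to construct, by a direct greedy/inductive procedure, a set $A \subseteq \N_0$ that is a very ``robust'' asymptotic basis of order $2$: so robust that deleting any infinite subset destroys the basis property, but deleting any finite subset does not. The ``if'' direction is automatic --- if $S$ is finite, then for all sufficiently large $n$ every representation $n = a + a'$ with $a,a' \in A$ can be arranged (by a counting/pigeonhole argument, as in the discussion of thin bases) to avoid the finitely many elements of $S$, provided $A$ itself is chosen to have enough representations. So the real content is the ``only if'' direction: if $S$ is infinite, then $A \setminus S$ fails to be an asymptotic basis of order $2$, i.e.\ infinitely many integers have no representation as a sum of two elements of $A \setminus S$.

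The key idea is to build $A$ as a disjoint union $A = \bigcup_{k=1}^\infty B_k$ of finite blocks, where each block $B_k$ lives in a long interval $I_k = [N_k, M_k]$ with the intervals $I_k$ extremely sparse (gaps growing super fast), and to arrange two competing features. First, to make $A$ an asymptotic basis we designate within each block a small ``spine'' --- for instance a short arithmetic-progression-like skeleton together with $0 \in A$ --- guaranteeing that every large $n$ is a sum of two elements of $A$, and in fact has many such representations using only spine elements, so that any \emph{finite} deletion is harmless. Second, to force fragility against infinite deletions, we arrange that for each block $B_k$ there is a target integer $n_k$ (say near $2N_k$ or in a ``private'' sub-interval attached to $B_k$) whose \emph{only} representations as a sum of two elements of $A$ use at least one element of $B_k$; because the $I_k$ are so sparse, a sum $a + a'$ can land near $n_k$ only if both $a,a'$ come from $I_k \cup \{$small stuff$\}$, and we tune $B_k$ so that killing even one element of $B_k$ (the ``wrong'' one) removes all representations of $n_k$. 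More precisely: enumerate $B_k = \{b_{k,1}, \dots, b_{k,t_k}\}$ and provide, for each $j$, a target $n_{k,j}$ whose representations all involve $b_{k,j}$. Then if $S$ is infinite, $S$ meets infinitely many blocks $B_k$, picking out some $b_{k,j} \in S$ for each such $k$, and the corresponding $n_{k,j}$ then has no representation in $A \setminus S$ --- giving infinitely many unrepresented integers.

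The construction is carried out inductively on $k$. Having built $B_1, \dots, B_{k-1}$ inside $I_1, \dots, I_{k-1}$, choose $N_k$ so large that: (i) the spine elements chosen so far already represent every integer in a suitable range in ``many'' ways (controlling the finite-deletion direction); (ii) $N_k$ exceeds twice the top of $I_{k-1}$, so that sums reaching the private targets of block $k$ genuinely cannot borrow from earlier blocks except via the globally available small elements (like $0$), which we account for explicitly. Then place the spine of $B_k$ and the private elements $b_{k,j}$ with their targets $n_{k,j}$, checking the no-alternative-representation condition by the sparsity of $I_k$ and a short arithmetic check. One must also verify the two roles don't collide: the private targets $n_{k,j}$ should lie outside the range that the global spine already covers redundantly, which is arranged by putting them in a reserved interval just below $N_k$ (or just above $M_k$) whose only ``large'' summands come from $B_k$ itself.

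The main obstacle, and the part requiring genuine care rather than bookkeeping, is reconciling the two demands simultaneously: an asymptotic basis needs \emph{every} large $n$ represented, yet we want, for infinitely many $n$ (the private targets), a representation so fragile that one deletion kills it, while for the \emph{rest} of the large $n$ the representation is so redundant that removing any finite set leaves it intact. The trick is a clean separation of labor across disjoint number-ranges: the global spine (plus $0$) handles the ``generic'' large integers with high multiplicity, while each block additionally handles a finite reserved batch of ``private'' integers with multiplicity exactly one per element. Making the reserved ranges disjoint from the generic range, and making the inter-block gaps large enough that cross-block sums never interfere with a private target, is where the quantitative choices of $N_k$ and $M_k$ must be specified precisely; once that is set up, both directions follow from the design.
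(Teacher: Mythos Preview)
The survey paper you are working from does not actually prove this theorem; it only quotes the result and cites the original Erd\H{o}s--Nathanson paper~\cite{nath75e}. So there is no ``paper's own proof'' to compare against directly, and your proposal must be judged on its own merits.

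Your overall strategy---assign to every element $b\in A$ a ``private target'' $n_b$ all of whose representations in $2A$ use $b$, while making every \emph{other} large integer redundantly represented---is the right shape, and the counting that follows (finite $S$ kills only finitely many targets; infinite $S$ meets infinitely many blocks and kills infinitely many targets) is correct once the construction exists. The problem is that your concrete construction, as described, cannot exist. You insist that the blocks $B_k\subseteq I_k=[N_k,M_k]$ satisfy $N_k>2M_{k-1}$ so that cross-block sums cannot interfere with the private targets of block $k$. But then $A\subseteq\{0\}\cup\bigcup_k I_k$, and for any integer $n$ with $2M_{k-1}<n<N_k$ one checks that no pair $a,a'\in A$ can sum to $n$: if both are at most $M_{k-1}$ the sum is too small, while if one is at least $N_k$ the sum is too large. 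Hence $2A$ omits every integer in the (nonempty, in fact growing) gaps $(2M_{k-1},N_k)$, and $A$ is not an asymptotic basis of order $2$ at all. Your ``spine'' cannot fix this, because you have placed the spine inside the blocks, and adjoining $0$ only recovers $A$ itself, not the gap intervals.

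This is exactly the tension you flag in your last paragraph, and your proposed resolution---place the private targets in a reserved interval ``just below $N_k$'' or ``just above $M_k$'', disjoint from the generic range---does not survive contact with arithmetic. If $n_{k,j}<N_k$ then $n_{k,j}-b_{k,j}<0$ for every $b_{k,j}\in B_k\subseteq[N_k,M_k]$, so $b_{k,j}$ cannot appear in any representation of $n_{k,j}$; and placing $n_{k,j}$ just above $M_k$ lands it in one of the very gaps where $2A$ is empty. What is actually needed is a construction in which the block intervals are long enough and close enough together that $2A$ covers everything (roughly $M_k\ge N_{k+1}/2$, not $M_k<N_{k+1}/2$), while the private targets are arranged by a more delicate device than sheer sparsity---for example, by adjoining to each block a few carefully placed isolated points whose unique partners are the designated $b_{k,j}$. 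Until you replace the ``intervals so sparse that nothing interferes'' mechanism with something compatible with $2A\sim\N_0$, the argument has a genuine gap at its load-bearing step.
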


Since there is no maximal finite subset of an infinite set, it follows that there exists an asymptotic basis of order 2 that contains no minimal asymptotic basis of order 2.

\begin{pbm}
Let $h \geq 3.$  Construct an asymptotic basis $A$ of order $h$ such that, for every subset $S\subseteq A$, the set $A\setminus S$  is an asymptotic basis of order $h$ if and only if $S$ is finite.
\end{pbm}

\begin{pbm}
Find necessary and sufficient conditions to determine if an asymptotic basis A of order $h$ contains a
minimal asymptotic basis of order $h$.
\end{pbm}

In a minimal asymptotic basis every element in the basis is
responsible for the representation of infinitely many numbers. In particular, if $A$ is a minimal asymptotic basis of order 2, then there must be infinitely many positive integers with a unique 
representation as the sum of two elements of $A$.

Let $A$ be a set of integers.  
Let $r_{A,2}(n)$ denote the \emph{unordered representation function} of the set $A$, that is, 
\[
r_{A,2}(n)= \left| \{ \{a_i,a_j\} \subseteq A : n=a_i+a_j \} \right|.
\]

\begin{thm}[Erd\H os-Nathanson~\cite{nath79c}]
Let $A$ be a set of nonnegative integers.  If $r_{A,2}(n)>c \log n$ for some $c>1/ \log(4/3)$ and all $n\geq n_0$, then $A$
contains a minimal asymptotic basis of order 2.
\end{thm}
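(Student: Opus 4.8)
The goal is to exhibit a subset $B \subseteq A$ that is an asymptotic basis of order $2$ and is minimal. By the definition of minimality it suffices to construct $B$ so that (i) $2B \sim \N_0$, and (ii) for every $b \in B$ there are infinitely many integers $n$ whose only representation as a sum of two elements of $B$ is $b + (n - b)$: then deleting $b$ destroys the sole representation of each of those $n$, so $B \setminus \{b\}$ is not an asymptotic basis of order $2$.

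I would build $B$ by recursion on stages, carrying along a disjoint \emph{frozen} set $F \subseteq A$ of elements promised never to enter $B$, and keeping, after stage $t$, finite sets $B_t \subseteq B_{t+1}$, $F_t \subseteq F_{t+1}$ with $B_t \cap F_t = \emptyset$, together with an integer $M_t \to \infty$ such that $[n_0, M_t] \subseteq 2 B_t$. Property (i) is protected throughout by one elementary observation: the $r_{A,2}(n)$ pairs $\{x, n-x\}$ with $x, n-x \in A$ are pairwise disjoint as sets, so each frozen element lies in at most one of them, and if the current $F$ has fewer than $r_{A,2}(n)$ elements then $n$ still has a representation with both summands in $A \setminus F$. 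This is where the constant enters: since $r_{A,2}(n) > c \log n$ with $c \log(4/3) > 1$, a frozen set may be allowed to grow like a fixed constant times $\log n$ — equivalently, a $1/2$-random subset of $A$ is almost surely still an asymptotic basis, because $\Pr[\,n \notin 2B\,] < (3/4)^{r_{A,2}(n)} < n^{-c\log(4/3)}$ is summable exactly when $c > 1/\log(4/3)$.

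Each stage has a \emph{coverage step} and a \emph{responsibility step}. In the coverage step I push $2B$ past a prescribed frontier: for each not-yet-represented $m$ up to that frontier I use the observation above to pick a pair $\{x, m-x\}$ of elements of $A$ avoiding the current $F$, and adjoin $x$ and $m-x$ to $B$; elements placed here are \emph{committed} and will never be frozen. In the responsibility step I service one element $b = b_j$ of the current $B$, with the index $j = j(t)$ cycling so that every index is serviced infinitely often (waiting, if necessary, until $b_j$ has entered $B$): I choose a witness $n_t$ enormously larger than everything mentioned so far, with $n_t - b \in A$ (possible since $A$ is infinite) and $n_t \notin B + B$ (possible since $B$ is finite), adjoin $n_t - b$ to $B$, and then, for every \emph{other} pair $\{x, n_t - x\}$ of elements of $A$ summing to $n_t$, freeze one of $x, n_t - x$ that is not committed — such an element exists, since two committed summands would force $n_t \in B + B$. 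After this step $b + (n_t - b)$ is the unique representation of $n_t$ inside $B$, and it remains unique forever because every competing pair now contains a frozen element. Taking $B = \bigcup_t B_t$, property (i) holds since every large integer is eventually covered, and property (ii) holds since each $b_j$ is the unique-representation partner of $n_t$ for infinitely many $t$.

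The step that requires real care — and the point I expect to be the main obstacle — is reconciling the two steps, because servicing a witness $n_t$ freezes on the order of $r_{A,2}(n_t) \asymp \log n_t$ elements, and if these obstruct the representation-pairs of later coverage targets the logarithmic budget above is destroyed. The remedy is to interleave the stages so that witnesses grow rapidly while coverage frontiers lag far behind the most recent witness; to freeze, whenever possible, the \emph{larger} of the two summands of each competing pair, so that frozen elements accumulate near the current frontier and cannot block much smaller targets; and, in order to dispatch a whole batch of witnesses at once without any frozen element colliding with a committed element or with another witness's partner, to select the witnesses and their frozen sets by a matching argument — Hall's theorem applied to the bipartite incidence graph of $A$-elements versus representation-pairs — whose marriage condition is supplied by $r_{A,2}(n) > c \log n$. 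Verifying that this bookkeeping can be maintained through all stages, with $1/\log(4/3)$ furnishing precisely the slack needed, is the technical heart of the argument, and with it $B$ is the required minimal asymptotic basis of order $2$ contained in $A$.
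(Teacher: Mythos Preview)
The survey paper does not prove this theorem; it merely states it with a citation to the original Erd\H{o}s--Nathanson article, so there is no in-paper proof to compare against. Judged on its own, your architecture --- build $B$ by alternating coverage steps with ``responsibility'' steps, freeze competing summands, and invoke Hall's marriage theorem to resolve conflicts --- is indeed the approach of the cited paper (whose very title, \emph{Systems of distinct representatives and minimal bases in additive number theory}, signals the role of Hall's theorem), and your probabilistic heuristic correctly isolates why the threshold is $1/\log(4/3)$.

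That said, the proposal is an outline rather than a proof, and the gap sits exactly where you flag it. You write that servicing a witness $n_t$ freezes ``on the order of $r_{A,2}(n_t)\asymp\log n_t$'' elements, but the hypothesis gives only a \emph{lower} bound $r_{A,2}(n_t)>c\log n_t$; nothing prevents $r_{A,2}(n_t)$ from being of order $n_t$, in which case a single responsibility step freezes far more than any logarithmic budget allows. The heuristics ``freeze the larger summand whenever possible'' and ``let witnesses outrun frontiers'' do not by themselves control this, because when the larger summand is already committed you are forced to freeze the smaller one, and such small frozen elements can obstruct arbitrarily many later coverage targets. The genuine argument does not bound the total size of the frozen set at all; instead, for each target $m$ one bounds only the number of frozen elements that actually occur as a summand in some representation of $m$, and it is this localized count that feeds into Hall's condition. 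Until you set up that incidence system precisely and verify the marriage condition against the hypothesis $r_{A,2}(m)>c\log m$, the step you call the ``technical heart'' remains unproved.
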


\begin{pbm}
Is this true if $r_{A,2}(n)>c\log n$ for some $c>0$ ?
\end{pbm}

\begin{pbm}
Let $A$ be a set of nonnegative integers. 
If $r_{A,2}(n)\rightarrow \infty$ as $n\rightarrow \infty$, does
$A$ contain a minimal asymptotic basis of order 2?
\end{pbm}

The idea of minimal asymptotic basis can be generalized in the
following way.

\begin{defi}
Let $r\geq 1$.  The set $A$ is an \emph{$r$-minimal asymptotic basis of order $h$} if, for every $S\subseteq A$, the set 
$A\setminus S$  is an asymptotic basis of order $h$ if and only if $|S|<r.$
\end{defi}

\begin{thm}[Erd\H os-Nathanson~\cite{nath75e}]
For every $r\geq 1$, there exist $r$-minimal asymptotic bases of order 2.
\end{thm}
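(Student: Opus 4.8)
The defining property of an $r$-minimal basis is monotone in $S$ in two directions: enlarging $S$ only removes elements from the $2$-fold sumset, so ``$A\setminus S$ is a basis'' is inherited by every smaller $S$, and ``$A\setminus S$ is not a basis'' by every larger $S$. Hence it suffices to produce $A$ with the two properties: (i) $A\setminus S$ is an asymptotic basis of order $2$ whenever $|S|=r-1$; (ii) $A\setminus S$ is not an asymptotic basis of order $2$ whenever $|S|=r$. A convenient bookkeeping device is the following. For an integer $n$, set $\mathcal{R}(n)=\{\{a,n-a\}:a\in A,\ n-a\in A\}$; since $a\mapsto n-a$ is an involution, $\mathcal{R}(n)$ is a matching (possibly with one degenerate loop at $n/2$), and $n$ is a sum of two elements of $A\setminus S$ exactly when $S$ is not a vertex cover of $\mathcal{R}(n)$. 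Thus (i) holds as soon as every sufficiently large $n$ admits $r$ representations $n=a_i+b_i$ with all $2r$ summands pairwise distinct, for then $\mathcal{R}(n)$ has $r$ disjoint edges and no set of size $r-1$ covers it; and (ii) holds as soon as, for every $r$-subset $S=\{s_1<\dots<s_r\}$ of $A$, there are infinitely many $n$ whose representations as sums of two elements of $A$ are exactly $n=s_i+(n-s_i)$, $1\le i\le r$, so that $S$ covers $\mathcal{R}(n)$.

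The plan is to construct $A$ explicitly. Write $A=\{0\}\cup\bigcup_{m\ge1}I_m$, where $I_m$ is a finite set contained in an interval $[N_m,M_m]$ with $M_{m-1}<N_m$, the scales $N_m,M_m$ growing quickly and fixed only at the end. Each block $I_m$ splits into a ``bulk'' --- a long interval placed high inside $[N_m,M_m]$ --- and a ``skeleton'', placed just above $M_{m-1}$. The bulks, the element $0$, and the cross-sums $I_m+I_{m-1}$ and $I_m+I_m$ are to be arranged so that $2A$ is cofinite and in fact so that every large integer has at least $r$ representations with distinct summands; this is an elementary (if slightly fussy) interval-covering computation and yields (i). Property (ii) is engineered into the skeleton, and the governing idea is self-similarity: take the skeleton of $I_m$ to be the shifted copy $N_m+(A\cap[0,M_{m-1}])$ of the part of $A$ already built. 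Then for an integer $n$ chosen just above $N_m$, block separation forces both summands of any representation $n=a+b$ either to lie ``low'' (with $a+b\le 2M_{m-1}<n$, impossible) or to have the larger summand in the skeleton of $I_m$ and the smaller among $A\cap[0,n-N_m]$; writing the larger summand as $N_m+b'$ with $b'\in A\cap[0,M_{m-1}]$, the representation becomes $a+b'=\delta$, where $\delta=n-N_m$. Hence the representations of $n$ are exactly the (shifted) representations of the much smaller integer $\delta$ in $A$, so $n$ has the prescribed representations iff $\delta$ does. Starting from a suitable finite ``seed'' $A\cap[0,M_0]$, in which by direct construction every $r$-subset has a companion integer with exactly the required representations, one propagates this property through all scales: a given $r$-subset $S$ of $A$ lies in $A\cap[0,M_{m-1}]$ for all large $m$, and by unwinding the self-similarity across the intervening scales it acquires, for each such $m$, a companion integer just above $N_m$; these furnish infinitely many witnesses, proving (ii).

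The principal obstacle is to reconcile this rigid self-similar skeleton with the basis property. Being an asymptotic basis of order $2$ forces $2A$ to contain all large integers --- hence no long gaps --- which makes the bulk intervals long and so, a priori, produces many representations of every large integer, the exact opposite of the control that (ii) demands. The construction must therefore place the bulk of each block strictly above $N_m$ plus the largest relevant ``old'' element, so that no summand of a skeleton-scale target integer can fall into a bulk, and must keep $2A$ gap-free instead by making the bulks long and by covering the otherwise-uncovered ranges with sums of consecutive bulks and with sums of a bulk against the isolated lowest points of a skeleton. One then has to verify that no stray representation of a target integer $n$ intrudes --- none from the bulk of $I_m$, none from an earlier bulk, and none from the sum of two skeleton points --- and to exhibit the finite seed set explicitly. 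I expect this verification, and the simultaneous tuning of $N_m$, $M_m$ and the bulk lengths that makes it go through, to be the substantive part: the self-similar recursion organizes the argument, but the parameters have to be balanced against one another with care. As an alternative to the recursion, one could try to design each skeleton directly as a sparse (say, Sidon-type) ``universal'' configuration $K_m$ that still contains a translate of every $r$-subset of $A\cap[0,M_{m-1}]$ while meeting each translate $t-K_m$ of its reflection in no more than the prescribed set; there the crux becomes whether sparseness and universality can be achieved together without spoiling the basis property.
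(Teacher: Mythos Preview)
The paper you are working from is a survey: Theorem~15 is stated with a citation to the original Erd\H{o}s--Nathanson paper and is given no proof here. There is therefore nothing in this paper to compare your proposal against.

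On the substance of your outline: the reduction you make is sound. Writing $\mathcal{R}(n)$ as a matching and rephrasing (i) as ``$r$ pairwise-disjoint representations'' and (ii) as ``$S$ is a vertex cover of $\mathcal{R}(n)$ for infinitely many $n$'' is exactly the right bookkeeping, and the block/self-similar skeleton idea is in the spirit of the constructions Erd\H{o}s and Nathanson use in this circle of problems. But what you have written is a strategy, not a proof. You explicitly defer the parameter choices (the $N_m$, $M_m$, bulk lengths), the verification that no stray representations of the target integers $n$ arise, and the construction of the finite seed set; and you flag these yourself as ``the substantive part.'' Those are not routine: in particular, your self-similar skeleton $N_m + (A\cap[0,M_{m-1}])$ contains a translated copy of the previous bulk, which is a long interval, and you have to check that sums of two skeleton points (i.e.\ two translated-bulk points, or a translated-bulk point plus a translated-skeleton point) do not hit your target $n$. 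That requires the target range to avoid $2N_m$ plus the doubled old bulk, which constrains where ``just above $N_m$'' can sit relative to the old $M_{m-1}$, and this constraint interacts with the gap-free requirement for $2A$. Until those inequalities are written down and shown consistent, the argument is incomplete. If you want feedback against an actual proof, you should consult \cite{nath75e} directly.
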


\begin{pbm}
Let $h \geq 3$ and $r \geq 2.$  Construct an $r$-minimal asymptotic basis $A$ of order $h$. 
\end{pbm}

\section{Maximal asymptotic nonbases}
Maximal asymptotic nonbases are the natural dual to minimal asymptotic bases.

\begin{defi}
The set $A$ of nonnegative integers is an \emph{asymptotic nonbasis  of order $h$} if it is not an asymptotic basis of order $h,$ that is, if $hA$ omits infinitely many nonnegative integers, that is,  the set $\N\setminus hA$ is infinite.
\end{defi}

\begin{defi}
The set $A$ of nonnegative integers is a \emph{maximal asymptotic nonbasis of order $h$} if $A$ is an asymptotic nonbasis of order $h$ such that, for every integer $a^*\in \N\setminus A,$ the set $A\cup\{a^*\}$ is an asymptotic basis of order $h$.
\end{defi}

The construction of minimal asymptotic bases is difficult, but it is easy to find simple examples of maximal nonbases.    For example, the set of all nonnegative even integers is a maximal asymptotic nonbasis of order $h$ for all $h \geq 2.$    The set of all nonnegative multiples of a fixed prime number is a maximal asymptotic nonbasis of order $p.$  Other examples can be constructed by taking appropriate unions of congruence classes.  

\begin{thm}[Nathanson~\cite{nath77b}]
There exist maximal asymptotic nonbases of zero asymptotic density.  
\end{thm}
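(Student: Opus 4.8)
I would construct, for each $h\ge 2$, a thin set $A\subseteq\N_0$ with $0\in A$ and $A(x)\ll\sqrt x$ (in particular of density zero) that is an asymptotic nonbasis of order $h$ while $A\cup\{a^*\}$ is an asymptotic basis of order $h$ for every $a^*\notin A$. The guiding observation is that, since $a^*+(h-1)(A\cup\{a^*\})$ is a translate of a set of density zero, $h(A\cup\{a^*\})$ can be cofinite only if $hA$ already has density $1$; so $A$ should be an ``almost basis'' whose $h$-fold sumset omits a thin but infinite set of holes $\{N_j\}$, and then the maximality requirement comes down to forcing $(h-1)A$ to contain all the translates $N_j-a^*$, $a^*\notin A$, for large $j$. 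Accordingly I fix a super-increasing sequence $N_1<N_2<\cdots$ (say $N_j=2^{2^j}$, shifted to be odd) and put $r_j=\lfloor\sqrt{N_j}\rfloor$, so that $r_j\to\infty$ but $\sum_{N_j\le x}r_j=o(x)$.

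Take $h=2$ as the model case and write $A=B\cup C$ with $C=\bigcup_{j\ge1}C_j$ and $C_j=N_j-\bigl(A^c\cap[0,r_j]\bigr)\subseteq[N_j-r_j,N_j]$, where $A^c=\N_0\setminus A$; since $r_j<N_{j-1}$, the set $A^c\cap[0,r_j]$ is already determined before the window around $N_j$ is filled, so the recursion is legitimate. The core $B$ is a standard thin block construction: partition $\N_0$ into consecutive intervals, install in each one a generating set of size $O(\sqrt{\mathrm{length}})$ whose pairwise sums cover that interval, but tailor the blocks near each $N_j$ — choosing $N_j$ only after $A\cap[0,N_{j-1}]$ has been committed — so that $B$ has no element in $[N_j-r_j,N_j]$ and $N_j$ lies outside $2\bigl(A\cap[0,N_j)\bigr)$. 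Then $B(x)\ll\sqrt x$, the sumset $2B$ has density $1$, and $N_j\notin 2B$.

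Granting the core, the verification is quick. \emph{Maximality}: given $a^*\notin A$, as soon as $r_j\ge a^*$ we have $a^*\in A^c\cap[0,r_j]$, so $N_j-a^*\in C_j\subseteq A$ and $N_j\in a^*+A\subseteq 2(A\cup\{a^*\})$; hence $2(A\cup\{a^*\})\supseteq 2A\cup\{N_j:r_j\ge a^*\}$ is cofinite, so $A\cup\{a^*\}$ is an asymptotic basis of order $2$. \emph{Thinness} of $A$ follows from $|C_j|\le r_j+1$ and the choice of $r_j$. \emph{Nonbasis}: fix $j$ and suppose $N_j=a+a'$ with $a,a'\in A$. Any two elements of $C_j$ sum to more than $N_j$ (since $N_j>2r_j$), and any element of $C_k$ with $k>j$ already exceeds $N_j$ (super-increasingness); so the representation uses an element of $C_j$ — necessarily just one, say $a=N_j-b$ with $b\in A^c\cap[0,r_j]$, whence $a'=b\in A^c$, contradicting $a'\in A$ — or it uses none, in which case $N_j\in 2\bigl(A\cap[0,N_j)\bigr)$, excluded by the construction of $B$. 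Thus $2A$ omits the infinite set $\{N_j\}$, and $A$ is a maximal asymptotic nonbasis of order $2$ of density zero.

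For $h\ge3$ the same architecture is used, and this is the part I expect to require real work: the naive reflected patch $C_j=N_j-(A^c\cap[0,r_j])$ can fail, because an element of $A^c$ may itself be an $(h{-}1)$-fold sum of elements of $A$, which would return $N_j$ to $hA$. One instead attaches to $A$, near $N_j/(h-1)$, a thin set $D_j$ with $(h-1)D_j$ covering an interval that straddles the hole — guaranteeing $N_j-a^*\in(h-1)A$ for all small $a^*\notin A$, hence maximality — while arranging, stage by stage (again committing $N_j$ only after the relevant finite initial segment of $A$ is fixed), that no sum of $h$ elements taken from $D_j$ and everything below it equals $N_j$. Throughout, the crux — immediate for $h=2$ via the reflection identity, delicate for $h\ge3$ — is to keep the holes $N_j$ out of $hA$ while the patches stay dense enough near them to supply the translates demanded by maximality; it is precisely the super-increasing growth of the $N_j$ that makes the stagewise bookkeeping go through.
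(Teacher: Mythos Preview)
The paper is a survey and does not include a proof of this theorem; it merely states the result and cites \cite{nath77b}. There is therefore no argument in the paper against which to compare your proposal.

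On its own merits, your sketch for $h=2$ follows the right architecture and is in the spirit of the original construction: a thin core $B$ whose doubled sumset is cofinite apart from a prescribed sparse set of holes $\{N_j\}$, together with reflected patches $C_j=N_j-(A^c\cap[0,r_j])$ guaranteeing that any outside point $a^*$ satisfies $N_j-a^*\in A$ once $r_j\ge a^*$, hence $N_j\in 2(A\cup\{a^*\})$. The reflection argument showing that an element of $C_j$ cannot pair with anything in $A$ to give $N_j$ is exactly right. Two places want tightening. First, ``$2B$ has density $1$'' is too weak for the maximality step: you actually need $\N_0\setminus 2A\subseteq\{N_j:j\ge1\}\cup F$ with $F$ finite, since otherwise adjoining $a^*$ need not cover the residual holes; the block construction can deliver this, but it must be stated and checked, especially in the windows $[N_j-r_j,N_j]$ where $B$ has been emptied. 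Second, the clause ``$N_j$ lies outside $2(A\cap[0,N_j))$'' is phrased circularly, because $C_j\subseteq[N_j-r_j,N_j-1]\subseteq[0,N_j)$ is itself part of $A\cap[0,N_j)$; what you need---and what your case analysis in fact establishes---is $N_j\notin 2\bigl((B\cup\bigcup_{k<j}C_k)\cap[0,N_j)\bigr)$, with the $C_j$ contribution handled by reflection and $C_k$ for $k>j$ discarded by size. With those two clarifications the $h=2$ argument is sound, and your caveats about the additional work needed for $h\ge3$ are appropriate.
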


The follow result implies that there exist asymptotic nonbases that cannot be embedded in maximal asymptotic nonbases.

\begin{thm}[Hennefeld~\cite{henn77}]
There exists an asymptotic nonbasis of order 2 such that, for every set 
$S\subseteq\N\setminus A$, the set $A\cup S$  is an asymptotic  nonbasis of order 2 if and only if 
$ |\N\setminus(A\cup S)|$ is infinite.
\end{thm}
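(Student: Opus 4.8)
The plan is to build $A$ by hand as the complement of a sparse family of long intervals, to read off exactly which integers the sumset $2A$ omits, and then to argue that these omissions are stable under adjoining any set $S$ that falls short of making $A\cup S$ co-finite.

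One implication is immediate: if $\N\setminus(A\cup S)$ is finite then $A\cup S$ contains a half-line $[N,\infty)$, and writing $n=N+(n-N)$ for $n\ge 2N$ shows that $A\cup S$ is an asymptotic basis of order $2$. So the real task is to construct a nonbasis $A$ with the property that $2(A\cup S)$ omits infinitely many integers whenever $\N\setminus(A\cup S)$ is infinite; taking $S=\emptyset$ will in particular exhibit $A$ itself as an asymptotic nonbasis of order $2$.

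For the construction I would fix a very fast-growing sequence $0=t_0<t_1<t_2<\cdots$, with $t_{k+1}$ far exceeding every sum of two terms among $t_0,\dots,t_k$, declare the ``gaps'' $J_k=[t_{2k},t_{2k+1})$, and put $A=\N\setminus\bigcup_k J_k$. Then $A$ is a union of intervals, so up to any prescribed scale $2A$ is a union of boundedly many intervals whose endpoints have the form $t_i+t_j$; checking that none of these endpoints enters the upper stretch $W_k=[2t_{2k},t_{2k+1})$ of the $k$th gap shows that $2A$ omits $\bigcup_k W_k$, with $|W_k|\to\infty$. The robustness step is then this: given $S\subseteq\bigcup_k J_k$ with $\bigcup_k(J_k\setminus S)$ infinite, for each of the infinitely many indices $k$ with $J_k\not\subseteq S$ one must locate a $w\in W_k$ still omitted by $2(A\cup S)=2A\cup(A+S)\cup 2S$. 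Since $w$ lies in a gap, in any representation $w=b+b'$ the larger summand is pinned inside $J_k$, where $A$ is empty, so it must come from $S$; the smaller summand is then confined to a short interval, and the freedom to move $w$ within $W_k$ is spent to guarantee that $S$ cannot supply the complementary summand either. Letting $k$ range over the admissible indices yields infinitely many integers omitted by $2(A\cup S)$, so $A\cup S$ is an asymptotic nonbasis of order $2$.

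The hard part is precisely this last step, and it is the dual of the delicate counting behind the Erd\H{o}s--Nathanson theorem~\cite{nath75e} on an asymptotic basis of order $2$ from which every finite, but no infinite, subset may be deleted: one must calibrate the growth of $(t_k)$ and the exact placement of the gaps so that, \emph{uniformly in} $S$, the two summands of a would-be representation of an element deep in $W_k$ are forced into sources controlled by a single gap, and so that the omitted witnesses can be chosen depending only on $k$. Securing this uniformity — in particular coping with adjoined sets $S$ that nearly fill a gap — is where the construction has to be engineered with care, and it is the step I would expect to consume most of the work.
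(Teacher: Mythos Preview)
This survey states Hennefeld's result without proof, so there is nothing in the paper to compare against; what follows assesses your sketch on its own.

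Your interval construction cannot be completed, and the obstruction is not a matter of tuning the growth of $(t_k)$. Let $S$ be all of $\bigcup_k J_k$ except for one point $m_k$ from each $J_k$ (over an infinite set of indices $k$). Then $A\cup S=\N\setminus\{m_k\}$ has infinite complement, yet it is an asymptotic basis of order $2$: for any $n$ the pairs $(j,n-j)$ with $j$ or $n-j$ among the $m_k$ number at most $2\bigl|\{m_k\}\cap[0,n]\bigr|=o(n)$, so for large $n$ some decomposition $n=j+(n-j)$ avoids the $m_k$ entirely. Thus the ``only if'' direction already fails for this $S$, regardless of how the endpoints $t_k$ are placed, and your acknowledged hard step---``coping with adjoined sets $S$ that nearly fill a gap''---is in fact impossible, not merely delicate. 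Worse, the same counting shows the obstruction is universal: whenever $B=\N\setminus A$ is infinite one can thin it to a sparse infinite $T$, and then $\N\setminus T=A\cup(B\setminus T)$ is an asymptotic basis of order $2$ with infinite complement. No set $A$ can satisfy the literal ``if and only if'' for every $S$.

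What Hennefeld's two-page note delivers, consistent with its title and with the sentence preceding the theorem here, is the weaker (and true) assertion that some asymptotic nonbasis of order $2$ lies in no maximal one: whenever $A\cup S$ is a nonbasis, one can adjoin \emph{some} further element and remain a nonbasis. Your interval set is a reasonable candidate for that statement, but the argument required is different from the one you outline---given a nonbasis $A\cup S$, produce a single $b\notin A\cup S$ whose adjunction still leaves infinitely many integers unrepresented---and it does not reduce to exhibiting witnesses in the windows $W_k$ uniformly in $S$.
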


There are many beautiful results on minimal asymptotic bases and
maximal asymptotic nonbases.  Here are two of my favorites.

\begin{thm}[Erd\H os-Nathanson~\cite{nath76c}]
There exists a partition of $\N$ into two sets A and B such that $A$ is a minimal asymptotic basis of order 2 and $B$ is a maximal asymptotic nonbasis of order 2. 
\end{thm}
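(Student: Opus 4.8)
The plan is to construct the partition $\N = A \cup B$ by a single recursive process along a very rapidly increasing sequence of ``scales'' $0 = n_0 < n_1 < n_2 < \cdots$, deciding at stage $j$ the membership in $A$ or in $B$ of every integer in $[n_{j-1}, n_j)$; the ratios $n_j / n_{j-1}$ will be taken large enough that the combinatorial work done at one stage cannot be spoiled by later stages. At stage $j$ I would: (i) fix a ``gap point'' $g_j$ of size about $2n_{j-1}$ and put into $A$ a ``covering set'' $C_j \subseteq [g_j/2, g_j)$, chosen so that for every $x \le g_j$ at least one of $x, g_j - x$ lies in $A$ (this forces $B$ to be an asymptotic nonbasis); (ii) adjoin to $A$ a further thin ``skeleton'' whose pairwise sums already fill $[N_0, \infty)$ (this forces $A$ to be an asymptotic basis of order $2$); and (iii) reserve, in a part of stage $j$ far from every covering set, an integer $N_c = c + c'$ for each $c \in C_j$, with the auxiliary summand $c'$ taken from a sparse region of a neighbouring scale, designed so that $\{c, c'\}$ is the \emph{only} representation of $N_c$ by two elements of $A$. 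Then $B := \N \setminus A$, and the four properties must be checked.

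Three of them are comparatively routine. $A$ is an asymptotic basis of order $2$ because $2A$ contains the cofinite set $[N_0,\infty)$ produced by the skeleton. $B$ is an asymptotic nonbasis of order $2$ because, by (i), for every $j$ and every $x\le g_j$ the pair $\{x, g_j-x\}$ is not contained in $B$, so $g_j\notin 2B$ and $g_j\to\infty$. For maximality of $B$: fix $a^*\in A$ and note $2(B\cup\{a^*\})=2B\cup(a^*+B)\cup\{2a^*\}$; since the integers omitted by $2B$ form a set contained in $\{g_j\}_{j\ge1}$ up to a finite set, it suffices that $g_j-a^*\in B$ for all large $j$, and one arranges this by choosing, at each stage $j$ with $a^*\le T_j$ (where $T_j\to\infty$ slowly), the point $g_j$ and the stage-$j$ skeleton so as to avoid the density-zero set $\{a+a' : a\in A\cap[0,T_j],\ a'\in A\}$; then $g_j-a^*$ is never placed in $A$.

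The heart of the argument, and the step I expect to be the main obstacle, is minimality of $A$. The obstruction is structural: because $B$ is an asymptotic nonbasis, $A$ must contain one element of each pair summing to $g_j$, so $|A\cap[0,g_j]|\ge g_j/2$ and $A$ has upper density at least $\tfrac12$; a set this dense on $[0,g_j]$ has $r_{A,2}(n)$ enormous for essentially every $n$ near scale $j$, whereas minimality insists that \emph{every} element of $A$ --- in particular every $c\in C_j$ --- be the unique summand of some integer. (Note also that $A$ cannot be obtained by merely padding a known thin minimal basis, since any element not in such a sub-basis would then be inessential.) The resolution is the delicate part of the Erd\H os--Nathanson construction: the covering sets $C_j$ must be built with enough internal structure, and enough emptiness must be deliberately left in $A$ just above scale $j$, that each $c\in C_j$ admits a bespoke witness $N_c=c+c'$ whose only other potential representations would use $c$-free pairs that the super-lacunarity of $(n_j)$ rules out; at the same time one must check that no skeleton element and no $N_c$ picks up a stray representation from a larger scale. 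Every individual requirement here is of the form ``avoid a set of density zero,'' so the recursion goes through; assembling all the choices --- the $n_j$, the $g_j$, the sets $C_j$, the skeleton, and the witnesses --- consistently is where essentially all the work lies.
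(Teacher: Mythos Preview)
The paper is a survey and does not prove this theorem; it merely states the result and cites the original 1976 Erd\H{o}s--Nathanson paper in \emph{Comment.\ Math.\ Helv.}  There is therefore no proof here against which to compare your proposal.

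On its own merits your outline has the right architecture---a staged construction balancing a covering constraint (forcing $g_j\notin 2B$) against sparseness and uniqueness constraints (forcing minimality of $A$)---and this is indeed the flavour of the original argument.  Two points, however, need repair.  First, minimality of $A$ requires that for every $a^*\in A$ the set $A\setminus\{a^*\}$ fail to be an \emph{asymptotic} basis, i.e.\ that \emph{infinitely many} integers lose all their representations; your step~(iii) manufactures only a single witness $N_c$ for each $c\in C_j$, which is not enough.  You must revisit every previously inserted element of $A$ at every later stage and produce a fresh witness for it there.  Second, in your maximality argument you call $\{a+a' : a\in A\cap[0,T_j],\ a'\in A\}$ a ``density-zero set,'' but $A$ has upper density at least $\tfrac12$, so a finite union of translates of $A$ certainly has positive upper density.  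What actually has density zero is the set of sums $a+a'$ with $a\le T_j$ and $a'$ belonging to the part of $A$ already \emph{forced} before stage $j$ (the thin skeleton and the earlier witnesses); the dense covering set $C_j$ is chosen only \emph{after} $g_j$, and one simply refrains from placing $g_j-a^*$ in $C_j$---which is free, since $a^*\in A$ already fulfils the covering requirement for the pair $\{a^*, g_j-a^*\}$.  With those two corrections the plan is sound, and the remaining work is exactly the bookkeeping you describe.
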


\begin{thm}[Erd\H os-Nathanson~\cite{nath76c}]
There exists a partition of $\N$ into two sets A and B such that, for any finite subset $F$ of $A$ and any finite subset $G$ of $B$, the partition of $\N$ into the sets 
\[
(A\setminus F) \cup G
\]
and 
\[
(B\setminus G)\cup F
\]
has the following property:  
\begin{enumerate} 
\item[(i)]
If $|F| = |G|$, then $(A\setminus F) \cup G$ is a minimal asymptotic basis of order 2 and $(B\setminus G)\cup F$ is a maximal asymptotic nonbasis of order 2. 
\item[(ii)]
If $|F| = |G|+1$, then $(A\setminus F) \cup G$ is a maximal asymptotic nonbasis of order 2 and $(B\setminus G)\cup F$ is a minimal asymptotic basis of order 2.
\end{enumerate} 
\end{thm}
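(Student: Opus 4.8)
The goal is a single partition $\N = A \sqcup B$ that is robust under finite swaps in the strong two-sided sense described. The plan is to build $A$ and $B$ simultaneously by an interval construction, alternating responsibility for two competing requirements: making $A$ (and its finite modifications) an asymptotic basis of order $2$, and making $B$ (and its finite modifications) an asymptotic nonbasis of order $2$. The key structural idea is to work with a rapidly growing sequence of breakpoints $0 = n_0 < n_1 < n_2 < \cdots$ and to decide, on each block $[n_{k-1}, n_k)$, whether that block "belongs mostly to $A$" or "belongs mostly to $B$," in such a way that (a) $A$ always contains enough small elements to represent every large integer as a sum of two, even after deleting a finite set $F$ and adjoining a finite set $G$, and (b) $B$ always contains arbitrarily long runs of consecutive integers that are $B$-unrepresentable, even after the dual modification, and moreover adjoining any single new element to $B$ destroys this.

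Here is the order I would carry out the steps. First, isolate the two atomic lemmas from the earlier Erd\H os--Nathanson circle of ideas: a "basis core" lemma producing a set $A_0$ of density zero that is an asymptotic basis of order $2$ and remains one after any finite perturbation (this is essentially the content of the theorem attributed to Erd\H os--Nathanson on bases stable under finite deletion, combined with the trivial observation that adjoining elements cannot hurt); and a "nonbasis core" lemma producing its complement-type set that is a maximal asymptotic nonbasis of order $2$ robust under finite perturbation of the dual kind. Second, reconcile the two: the subtle point is that $A$ and $B$ partition $\N$, so the small elements I reserve for the basis part of $A$ cannot simultaneously be used to pad $B$'s unrepresentable runs. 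I would arrange the breakpoints so that block $[n_{2j-1}, n_{2j})$ is almost entirely in $B$ and long enough to host a run of consecutive integers not representable as $b+b'$ with $b,b'\in B$, while the elements of $A$ in low blocks are sparse enough (density zero) that their pairwise sums miss a positive-density set yet still eventually cover everything when we are allowed to use one large summand. Third, verify case (i): if $|F|=|G|$, count degrees of freedom — removing $|F|$ elements from the basis core and adding $|G|$ arbitrary elements changes only finitely much, so $(A\setminus F)\cup G$ stays a basis; dually $(B\setminus G)\cup F$ stays a nonbasis, and I must additionally check it is \emph{maximal}, i.e. inserting any further element of its complement creates a basis — this uses that the complement is exactly $(A\setminus F)\cup G$ which is a basis, so adding one of its elements to $B$ side gives... here one must be careful and re-derive maximality directly from the block structure rather than from the partition identity. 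Fourth, verify case (ii): when $|G| = |F| - 1$ the roles swap; the parity of $|F| - |G|$ is what flips "basis" and "nonbasis," and this is forced by the counting identity $|(A\setminus F)\cup G| $ versus a reference set of even-indexed blocks — essentially the same mechanism that makes the even integers a maximal nonbasis of order $2$ (you need one odd number to get all large integers, and exactly one).

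The main obstacle, I expect, is ensuring \emph{maximality} of the nonbasis side under \emph{every} finite two-sided modification simultaneously with \emph{minimality} of the basis side, because these two demands pull the small elements of the integers in opposite directions and must nonetheless be met by one fixed partition. Concretely: minimality of $(A\setminus F)\cup G$ requires that every element of it is essential, hence that there are infinitely many integers with a unique representation using that element — a fragility condition — whereas robustness of the basis property requires a comfortable surplus of representations. Threading between "unique representation infinitely often" and "enough representations to survive deleting $|F|$ elements" is the crux; I would handle it by making the basis core $A_0$ itself a \emph{minimal} asymptotic basis with the additional feature that deleting any $r$ of its elements leaves an asymptotic basis for every fixed $r$ (an $r$-minimal-type strengthening, available from the Erd\H os--Nathanson construction of $r$-minimal bases, taking a union over all $r$), and then showing the block construction preserves both properties. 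The remaining verifications — that the asymptotic densities work out, that the $B$-runs have the right lengths, and that the parity bookkeeping in (i) versus (ii) is exactly as stated — are routine once the cores and breakpoint growth rates are fixed.
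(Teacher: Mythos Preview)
The paper is a survey and does not prove this theorem; it only states the result and cites the original Erd\H os--Nathanson article. There is therefore no in-paper argument to compare your proposal against, but the proposal itself has a real gap.

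Your plan hinges on building the $A$-side around a ``basis core'' $A_0$ that \emph{remains} an asymptotic basis of order $2$ after deleting any finite set (``deleting any $r$ of its elements leaves an asymptotic basis for every fixed $r$''). This is incompatible with part (ii): taking $G=\emptyset$ and $|F|=1$, the set $A\setminus\{a^*\}$ must be a \emph{nonbasis} for every $a^*\in A$. So $A$ is already forced to be a minimal asymptotic basis, and more generally the basis/nonbasis status of $(A\setminus F)\cup G$ must flip on the exact value of $|F|-|G|$, not be robust against it. A core that survives arbitrary finite deletion would make (ii) false. (Your appeal to $r$-minimal bases is also inverted: by the definition recalled just above in the paper, an $r$-minimal basis \emph{ceases} to be a basis once $r$ elements are removed; it does not survive.)

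What is needed is the opposite of robustness: a knife-edge construction in which every single element of $A$ is essential for infinitely many representations (so removing one destroys the basis property), and simultaneously every single element of $\N\setminus B$ repairs infinitely many gaps of $2B$ (so adjoining one creates a basis), with these two features preserved under all finite balanced swaps. Your even-integers analogy is much closer to the right mechanism than your ``robust core plus padding'' picture; in the original argument the partition is engineered so that the net count $|G|-|F|$ alone controls which side covers a certain infinite family of target integers. The block scaffolding you sketch could carry such a construction, but you would have to replace the robustness philosophy entirely before the verifications in your Steps~3 and~4 become feasible.
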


\section{Complementing sets of integers}

Let $A$ and $B$ be sets of integers, or, more generally, subsets of any additive abelian semigroup $\X$, and let $A+B = \{ a+b : a\in A, b\in B\} = C.$  If every element of the sumset $C$ has a \emph{unique} representation as the sum of an element of $A$ and an element of $B$, then we write $A\oplus B = C.$  We say that the set $A$ \emph{tesselates} the semigroup $\X$ if there exists a set $B$ such that $A\oplus B = \X,$ and that $A$ and $B$ are \emph{complementing subsets of \X.}

In this section we consider complementing sets of integers.  We examine  the special case when $A$ is a finite set of integers, and we want to determine if there exists an infinite set $B$ of integers such that $A\oplus B = \Z.$    By translation, we can always assume that $A$ is a finite set of integers with $0 \in A,$ and that $0$ also belongs to $B$.

We call a set $B$ \emph{periodic} with  \emph{period $m$} if $b\in B$ implies that $b \pm m \in B.$ 

\begin{thm}[D. J. Newmann~\cite{newm77}]
Let $A$  be a finite set of integers.  If there exists a set $B$ such that $A\oplus B=\Z$, then $B$ is periodic with 
period  
\[
m\leq 2^{diam(A)}
\]
where
$diam(A)=\max(A)-\min(A)$.
\end{thm}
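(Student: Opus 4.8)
The plan is to reduce everything to a finite combinatorial condition by working modulo a suitable integer and then exploit a pigeonhole argument on "states" that record local information about $B$.

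First I would normalize: assume $0 \in A$ and $0 \in B$, write $D = \operatorname{diam}(A) = \max(A)$, so that $A \subseteq [0,D]$. The key observation is that since $A \oplus B = \Z$, for each integer $n$ there is a unique way to write $n = a + b$ with $a \in A$, $b \in B$; equivalently, looking at which residue of $B$ "covers" $n$, the set $B$ is completely determined once we know, for a window of $D$ consecutive integers, which of them lie in $B$. Concretely, define for each $n \in \Z$ the \emph{state} $s(n) = B \cap [n, n+D-1]$, viewed as a subset of $\{0,1,\dots,D-1\}$ after shifting by $n$. I claim the state $s(n)$ determines $s(n+1)$: given $A\oplus B = \Z$, the integer $n$ is covered exactly once, and the elements of $B$ that can cover $n$ all lie in $[n-D, n]$; knowing $B\cap[n-D+1,\dots]$ is not quite enough, so I would instead let the state be $B \cap [n-D, n-1]$ (a length-$D$ window just below $n$). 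Then whether $n \in B$ is forced: $n \in B$ iff none of the elements $b \in B \cap [n-D, n-1]$ already covers $n$ (i.e. iff $n - b \notin A$ for all such $b$), because $n$ must be covered by some $a + b$ with $b \le n$ and $b \ge n - D$, and uniqueness forbids two such. Hence $s(n) \mapsto s(n+1)$ is a well-defined map on the set of length-$D$ binary windows, of which there are $2^D$.

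The second step is the pigeonhole / periodicity conclusion. Since there are at most $2^D$ possible states and the transition $s(n) \to s(n+1)$ is a deterministic function, the bi-infinite sequence $(s(n))_{n\in\Z}$ must be eventually periodic going forward; but because the transition map, restricted to the states that actually occur in our bi-infinite orbit, must also be \emph{injective} (running time backwards: $s(n-1)$ is likewise determined by $s(n)$, by the symmetric argument covering the integer $n-1$ from below — or more cleanly, an eventually-periodic orbit that is bi-infinite must be purely periodic), the sequence $(s(n))$ is purely periodic with some period $m \le 2^D$. Purely periodic states immediately give $B + m = B$, i.e. $B$ is periodic with period $m \le 2^{\operatorname{diam}(A)}$, which is the claim.

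The step I expect to be the main obstacle is establishing rigorously that the transition map is well-defined and, crucially, that the orbit is \emph{purely} (not just eventually) periodic — this is where one must use that $B$ is defined for all of $\Z$, not just $\N_0$, together with uniqueness of representations in both directions. I would handle well-definedness by carefully checking the covering-uniqueness bookkeeping in the window $[n-D, n]$, and handle pure periodicity by the standard fact that a deterministic dynamical system whose trajectory is defined for all negative times as well must have its $\omega$-limit behavior coincide with its $\alpha$-limit behavior along that single orbit, forcing periodicity from the start; alternatively, one shows directly that $s(n) = s(n')$ implies $s(n+1) = s(n'+1)$ \emph{and} $s(n-1) = s(n'-1)$, so the first repeated state pins down the period throughout $\Z$. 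A minor additional check is that the bound is $2^D$ and not $2^D - 1$ or similar; I would not worry about optimizing the constant since the statement only asks for $m \le 2^{\operatorname{diam}(A)}$.
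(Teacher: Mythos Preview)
The paper is a survey and does not actually prove this theorem; it merely states Newman's result with a citation to~\cite{newm77} and then records some consequences. So there is no ``paper's own proof'' to compare against.

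That said, your argument is correct and is essentially Newman's original proof. With the normalization $0=\min(A)\in A$ and $D=\max(A)$, the window $s(n)=B\cap[n-D,n-1]$ (viewed modulo translation as a subset of $\{0,\dots,D-1\}$) does determine whether $n\in B$: since $0\in A$, the integer $n$ is represented as $0+n$ precisely when no $b\in B\cap[n-D,n-1]$ already satisfies $n-b\in A$, and uniqueness forbids both. Hence $s(n)\mapsto s(n+1)$ is a well-defined self-map of a set of size $2^{D}$. For backward determinism you use $D=\max(A)\in A$ in the symmetric way to recover whether $n-1-D\in B$ from $s(n)$, so $s(n)\mapsto s(n-1)$ is also well-defined; the bi-infinite orbit is therefore purely periodic with period at most $2^{D}$, and $s(n)=s(n+m)$ for all $n$ gives $B=B+m$. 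Your identification of the ``main obstacle'' is accurate, and the fix you sketch (forward and backward determinism from $0\in A$ and $D\in A$ respectively) is exactly what is needed.
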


It follows that if $A\oplus B = \Z,$ then  $B$ is a union of congruence classes modulo $m$.  Defining 
$\overline{A}=\{a+m\Z :  a\in A\}$ and $\overline{B}=\{b+m\Z : b\in B\}$, we obtain a complementing pair $\overline A\oplus\overline B=\Z/m\Z$.  Conversely, suppose that $\overline{A}$ and $\overline{B}$ are sets of congruence classes modulo $m$ such that $\overline A\oplus\overline B=\Z/m\Z$.  Let $A$ be a set of representatives of the congruence classes in $\overline A$ and let $B$ be the union of the congruence classes in $\overline B$.  Then $ A\oplus B=\Z$.

\begin{thm}[Kolountzakis~\cite{kolo03}, Ruzsa~\cite{ruzs06,tijd06}]
Let $A$ and $B$ be sets of integers such that $A$ is finite, $A\oplus B=\Z$, and $B$ has minimal period $m$.  Then 
\[
m \ll e^{c\sqrt{diam(A)}}.
\]
\end{thm}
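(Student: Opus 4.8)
The plan is to move everything into the cyclic group $\Z/m\Z$, encode the tiling as a cyclotomic divisibility for the ``mask polynomial'' of $A$, extract from the \emph{minimality} of the period the cyclotomic factors that are forced on $A$, and then reduce to a purely arithmetic extremal problem; that last reduction is where the improvement over Newman's bound $m\le 2^{\diam(A)}$ really lives.

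After a translation we may assume $0\in A$ and $0\in B$, and we set $n=\diam(A)$, so $A\subseteq\{0,1,\dots,n\}$ with $n\in A$. As recalled in the text, $A\oplus B=\Z$ makes $B$ periodic, and modulo $m$ we get a complementing pair $\overline A\oplus\overline B=\Z/m\Z$; directness of the sum forces the elements of $A$ into distinct residue classes. Let $\overline A(x)=\sum_{a\in\overline A}x^{a}$ and $\overline B(x)=\sum_{b\in\overline B}x^{b}$, with representatives taken in $[0,m)$. Then the tiling is equivalent to $\overline A(x)\,\overline B(x)\equiv 1+x+\cdots+x^{m-1}\pmod{x^{m}-1}$, and multiplying by $x-1$ yields an honest identity
\[
(x-1)\,\overline A(x)\,\overline B(x)=(x^{m}-1)\,P(x),\qquad P\in\Z[x].
\]
Because $x^{m}-1$ has only simple roots, $\overline A(\zeta)\,\overline B(\zeta)=0$ for every $m$-th root of unity $\zeta\ne 1$, so for each divisor $d\mid m$ with $d>1$ the cyclotomic polynomial $\Phi_{d}$ divides $\overline A$ or $\overline B$.

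Next I would exploit minimality of the period. One checks that $\overline B$ is a union of cosets of the order-$p$ subgroup of $\Z/m\Z$ — i.e.\ $B$ has period $m/p$ — exactly when $\Phi_{e}\mid\overline B$ for \emph{every} divisor $e\mid m$ with $v_{p}(e)=v_{p}(m)$. Since $m$ is the minimal period, for each prime $p\mid m$ there is a divisor $e_{p}\mid m$ with $v_{p}(e_{p})=v_{p}(m)$ and $\Phi_{e_{p}}\nmid\overline B$; as $e_{p}>1$, the identity above forces $\Phi_{e_{p}}\mid\overline A$. The distinct cyclotomic polynomials among the $\Phi_{e_{p}}$ are pairwise coprime and all divide $\overline A$, so $\sum\phi(e_{p})\le\deg\overline A\le n$ (sum over distinct values), and since $p^{v_{p}(m)}\mid e_{p}$ for every $p$ we get $m=\operatorname{lcm}\{e_{p}:p\mid m\}$. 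Two consequences fall out at once: every prime power $p^{a}$ exactly dividing $m$ satisfies $\phi(p^{a})\le\phi(e_{p})\le n$, hence $p^{a}\le 2n$, and every prime dividing $m$ is $\le n+1$.

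The remaining, genuinely harder step is the one carried out by Kolountzakis and Ruzsa: deducing from ``$m=\operatorname{lcm}\{e:\ \Phi_{e}\mid\overline A\}$ with $\sum\phi(e)\le n$ and every prime power of $m$ at most $2n$'' that $m\ll e^{c\sqrt n}$. The cyclotomic bookkeeping alone does not suffice — the family of all primes up to $\sqrt{n\log n}$ has totient-sum $O(n)$ and lcm roughly $e^{\sqrt{n\log n}}$ — so one must feed back the hypothesis that $A$ genuinely tiles. The additional inputs are: (i) the Coven--Meyerowitz relation $|A|=\prod_{p^{a}\in S_{A}}p$, where $S_{A}=\{p^{a}:\Phi_{p^{a}}\mid\overline A\}$, which together with $|A|\le n+1$ heavily restricts the prime powers whose cyclotomic polynomial divides $A$; (ii) Newman's principle that $B$ is determined by its trace on any window of length $n$, combined with the packing estimate that each such window meets $B$ in at most $O(n/|A|)$ points, so that the period is bounded by the number of admissible windows, which is $\binom{n}{O(n/|A|)}$; and (iii) Chebyshev-type estimates such as $\sum_{p\le x}\log p\ll x$. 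Balancing (i)--(iii) — in particular bounding $\omega(m)$, the number of prime divisors of $m$, which is what forces the exponent down from $n$ (Newman) and from $\sqrt{n\log n}$ (naive cyclotomics) to $\sqrt n$ — is the step I expect to be the main obstacle.
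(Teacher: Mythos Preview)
The paper does not include a proof of this theorem; it is a survey, and the result is simply stated with citations to Kolountzakis and to Ruzsa's appendix in Tijdeman's article. There is therefore no in-paper argument against which to compare your proposal.

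On the substance of your sketch: the reduction to $\Z/m\Z$, the mask-polynomial identity, and the extraction from minimality of the period of divisors $e_{p}\mid m$ with $p^{v_{p}(m)}\mid e_{p}$ and $\Phi_{e_{p}}\mid\overline A$ are all correct and are indeed the standard opening moves in both cited proofs. But your write-up is, by your own admission, not a proof: you correctly isolate the decisive step (passing from $\sum\phi(e_{p})\le n$ together with the tiling hypothesis to $m\ll e^{c\sqrt n}$), list ingredients you expect to need, and then stop. Note in particular that the window-counting in your item~(ii) yields at best $m\le\binom{n}{O(n/|A|)}=\exp\bigl(O((n/|A|)\log|A|)\bigr)$, which meets the target only when $|A|$ is at least of order $\sqrt n\,\log n$; you have not indicated how small $|A|$ is handled, and the Coven--Meyerowitz identity in~(i) does not by itself close that gap. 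The actual arguments of Kolountzakis and Ruzsa proceed differently at this final stage, so if you wish to complete the proof you should consult those references directly rather than try to push the (i)--(iii) outline as written.
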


\begin{thm}[Biro~\cite{biro05}]
Let $A$ and $B$ be sets of integers such that $A$ is finite, $A\oplus B=\Z$, and $B$ has minimal period $m$.  Then 
\[
m \ll e^{c\sqrt[3]{diam(A)}}.
\]
\end{thm}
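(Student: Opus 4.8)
The plan is to translate the tiling into mask polynomials, reduce the statement to a bound on how short a $0/1$ polynomial can be if it is divisible by prescribed cyclotomic polynomials, and then invoke a quantitative divisibility lemma for $0/1$ polynomials.

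First I would periodize and pass to $\Z/m\Z$, exactly as in the discussion following Newman's theorem. After translating so that $0\in A$, set $A(x)=\sum_{a\in A}x^{a}$, so that $\deg A(x)=\max A=\diam(A)=:n$, and let $B(x)$ be the $0/1$ polynomial of degree $<m$ supported on a system of coset representatives of $\overline B$ in $[0,m)$. Then $A(x)B(x)\equiv 1+x+\cdots+x^{m-1}\pmod{x^{m}-1}$. Since $1+x+\cdots+x^{m-1}=\prod_{d\mid m,\,d>1}\Phi_{d}(x)$ is squarefree, each $\Phi_{d}$ with $1<d\mid m$ divides exactly one of $A(x)$, $B(x)$; let $R_{A}=\{d:\,1<d\mid m,\ \Phi_{d}\mid A(x)\}$. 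Because the $\Phi_{d}$ with $d\in R_{A}$ are distinct irreducible factors of $A(x)$,
\[
\sum_{d\in R_{A}}\varphi(d)\ \le\ n .
\]

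Next I would read off the consequence of minimality. The set $\overline B$ is invariant under the shift by $m/p$ if and only if $x^{m/p}B(x)\equiv B(x)\pmod{x^{m}-1}$, i.e.\ if and only if $\Phi_{d}\mid B(x)$ for every $d\mid m$ with $v_{p}(d)=v_{p}(m)$. Hence $B$ having minimal period exactly $m$ forces: for every prime $p\mid m$ there is a divisor $d_{p}\mid m$ with $v_{p}(d_{p})=v_{p}(m)$ and $\Phi_{d_{p}}\nmid B(x)$, that is, $d_{p}\in R_{A}$. In particular $n\ge\varphi(d_{p})\ge\varphi\bigl(p^{v_{p}(m)}\bigr)\ge\tfrac12 p^{v_{p}(m)}$ at every prime, so in the extremal regime $m$ carries only bounded prime powers (high powers are prohibitively expensive) and one may think of $m$ as essentially squarefree. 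Combining this with the displayed degree inequality and Mertens' theorem already recovers, up to a logarithmic factor, the Kolountzakis--Ruzsa bound $m\ll e^{c\sqrt{n}}$: writing $\log m=\sum_{p\mid m}v_{p}(m)\log p$ and using $n\ge\sum_{d\in R_{A}}\varphi(d)\ge c\sum_{p}p^{v_{p}(m)}$, a Mertens estimate bounds $\log m$ by $O(\sqrt{n\log n})$.

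The improvement to the exponent $1/3$ must use that $A(x)$ is a genuine $0/1$ polynomial, not merely a polynomial with the prescribed cyclotomic factors. Divisibility by $\Phi_{d}$ is a rigid linear constraint on the residue counts of $A$: for instance, writing $n^{(p)}_{j}=|\{a\in A:\,a\equiv j\!\pmod{2p}\}|$, one has $\Phi_{2p}\mid A(x)$ if and only if $(-1)^{j}\bigl(n^{(p)}_{j}-n^{(p)}_{j+p}\bigr)$ is independent of $j\in[0,p)$, and analogous sawtooth identities govern the other $d_{p}$. Biró's key point is that a $0/1$ polynomial of degree $n$ whose residue counts simultaneously satisfy such identities for a family of divisors covering the prime-power parts of $m$ cannot have short support: the nonnegativity and integrality of the $n^{(p)}_{j}$, propagated through all of these constraints at once, force the support of $A$ to satisfy $n\gg(\log m)^{3}$, in contrast to the merely quadratic bound $n\gg(\log m)^{2}$ (up to a logarithmic factor) supplied by the naive degree count $n\ge\sum_{p}\varphi(d_{p})$. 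Grouping the primes $p\mid m$ into dyadic blocks according to the size of $p^{v_{p}(m)}$, applying this spreading estimate block by block, and optimizing the resulting Mertens-type sum then converts $\sum_{p\mid m}v_{p}(m)\log p$ into $O(n^{1/3})$, i.e.\ $\log m=O(n^{1/3})$, which is the assertion.

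The step I expect to be the main obstacle is precisely this quantitative spreading lemma: making precise, and with the correct exponent, how long the support of a $0/1$ polynomial must be if it is divisible by a given family of cyclotomic polynomials whose orders have least common multiple $m$. This is the genuinely new ingredient in Biró's proof; the mask-polynomial reduction, the translation of minimality into cyclotomic divisibility, the reduction to essentially squarefree $m$, and the final Mertens optimization are all routine.
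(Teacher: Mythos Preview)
The paper is a survey and does not give any proof of this theorem; it simply states Bir\'o's result with a citation to~\cite{biro05}. There is therefore nothing in the paper to compare your argument against.

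As an outline of Bir\'o's actual argument your sketch is accurate in its architecture: the reduction to mask polynomials modulo $x^{m}-1$, the dichotomy $\Phi_{d}\mid A(x)$ or $\Phi_{d}\mid B(x)$ for each $d\mid m$, the translation of minimality of the period into ``for every prime $p\mid m$ there is $d_{p}\in R_{A}$ with $v_{p}(d_{p})=v_{p}(m)$,'' and the observation that the naive degree count $\sum_{d\in R_{A}}\varphi(d)\le n$ only yields the Kolountzakis--Ruzsa exponent. You also correctly isolate the genuinely new ingredient---the lower bound on the length of a $0/1$ polynomial divisible by a prescribed family of cyclotomic polynomials---and you are honest that you have not supplied it. Two small cleanups: you should translate so that $\min A=0$ (not merely $0\in A$) to get $\deg A(x)=\diam(A)$, and your description of the $\Phi_{2p}$ constraint is a special illustration rather than the mechanism Bir\'o actually uses. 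But since the paper contains no proof at all, your sketch goes well beyond what is present there.
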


\begin{pbm}
Find the least upper bound for the period of a set $B$ of integers that is complementary to a finite set $A$ of diameter $d.$.
\end{pbm}

We can generalize the problem of complementing sets of integers to higher dimensions.
Let $d\geq 2$ and let $A$ be a finite set of lattice points in $\Z^d.$
Suppose there exists a set $B\subseteq \Z^d$ such that $A\oplus B=\Z^d$.  The following problem is well-known.  

\begin{pbm}
Is the set $B$ periodic even in one direction?  Equivalently, does there exist a lattice point $b_0\in\Z^d\setminus\{0\}$ such that $B+ \{ b_0 \} = B$?
\end{pbm}

We can also generalize the problem of complementing sets of integers to linear forms.  Rewrite the original question as follows:
Let $\varphi(x,y)=x+y$.  For sets $A, B\subseteq\Z$, we define the set 
\[
\varphi(A,B)=\{ \varphi(a,b): a\in A,b\in B\}
\]
and the representation function 
\[
r_{A,B}^{(\varphi)}(n)=\{(a,b)\in A\times B : \varphi(a,b)=n\}.
\]
Given a finite set $A$, does there exist a set $B$ such that
$\varphi(A,B)=\Z$ and $r_{A,B}^{(\varphi)}(n)=1$ for all integers $n$?
Now consider the linear forms 
\[
\psi(x_1,\ldots,x_h)=u_1x_1+\ldots+u_hx_h
\]
and 
\[
\varphi(x_1,\ldots,x_h,y)=\psi(x_1,\ldots,x_h)+vy=u_1x_1+\ldots+u_hx_h+vy
\]
with nonzero integer coefficients $u_1,\ldots, u_h,v$.
Given an $h$-tuple $\mathcal{A}=(A_1,\ldots A_h)$ of finite sets of
integers, and a set $B$ of integers, we define the set
\[
\varphi(\mathcal{A},B)=\{u_1a_1+\ldots+u_ha_h+vb :
a_i\in A_i\ \text{ for } i=1,\ldots, h \text{ and }  b\in B\}
\]
and the representation function
\[
r_{\mathcal{A},B}^{(\varphi)}(n)=\{(a_1,\ldots,a_h,b)\in A_1\times\ldots\times A_h\times B : \varphi(a_1,\ldots,a_h,b)=n\}.
\]

\begin{pbm}
Given an $h$-tuple $\mathcal{A}=(A_1,\ldots A_h)$ of finite sets of
integers, determine if there exists a set $B$ such that $\varphi(\mathcal{A},B)=\Z$
and $r_{\mathcal{A},B}^{(\varphi)}(n)=1$ for all integers $n$?
\end{pbm}

In this case, we say that $\mathcal{A}$ and $B$ are complementing sets of integers with respect to the linear form $\varphi.$  

\begin{thm}[Nathanson~\cite{nath08g}]
If $\mathcal{A}$ and $B$ are complementing sets of integers with respect to the linear form $\varphi,$ then B is periodic with period 
\[
m\leq2^{\frac{diam(\psi(A_1,\ldots,A_h))}{|v|}}.
\]
\end{thm}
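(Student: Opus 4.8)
The plan is to reduce the linear-form problem to the classical Newman theorem (stated above) by a clean substitution. The key observation is that the form $\varphi(x_1,\ldots,x_h,y)=\psi(x_1,\ldots,x_h)+vy$ treats the last variable separately: if we fix the set $C=\psi(A_1,\ldots,A_h)=\{u_1a_1+\cdots+u_ha_h : a_i\in A_i\}$, then a representation $n=\varphi(a_1,\ldots,a_h,b)$ is precisely a representation $n=c+vb$ with $c\in C$ and $b\in B$. So uniqueness of representations of every $n\in\Z$ by $\varphi$ relative to $(\mathcal{A},B)$ forces (at least) that every $n\in\Z$ has a unique representation as $c+vb$ with $c\in C$, $b\in B$; i.e.\ $C\oplus (vB)=\Z$, where $vB=\{vb : b\in B\}$. (One must be slightly careful: distinct tuples in $A_1\times\cdots\times A_h$ could give the same $c\in C$, so $r^{(\varphi)}_{\mathcal{A},B}(n)=1$ for all $n$ actually implies both that each $c\in C$ arises from a \emph{unique} tuple and that $C\oplus vB=\Z$; for the periodicity conclusion we only need the latter.)

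Next I would apply Newman's theorem to the finite set $vB$... no: Newman's theorem gives periodicity of the \emph{cofactor} of a finite set, and here $C$ is the finite set while $vB$ is infinite. So the correct move is: $C$ is finite, $C\oplus(vB)=\Z$, hence by Newman's theorem $vB$ is periodic with period $m'\le 2^{\diam(C)}=2^{\diam(\psi(A_1,\ldots,A_h))}$. That is, $vb\in vB \Rightarrow vb\pm m'\in vB$. Now I need to transfer periodicity of $vB$ back to $B$. Since multiplication by $v$ is injective on $\Z$, $vb\pm m'\in vB$ means $m'$ is divisible by $|v|$ — indeed $vb+m'=vb'$ for some $b'\in B$ forces $v\mid m'$ — and then $b\pm (m'/|v|)\in B$. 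Hence $B$ is periodic with period $m=m'/|v|\le 2^{\diam(\psi(A_1,\ldots,A_h))}/|v|=2^{\diam(\psi(A_1,\ldots,A_h))/|v|}$, which is exactly the claimed bound. (Here I am reading the exponent in the statement as $\diam(\psi(A_1,\ldots,A_h))/|v|$; the displayed formula should be interpreted that way, and one should note $|v|$ divides $m'$ so the exponent is an integer — or at worst use it as a real exponent, since the inequality is all that is asserted.)

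The one genuine subtlety — the step I expect to be the main obstacle — is justifying that $v\mid m'$, i.e.\ that the period of $vB$ is automatically a multiple of $|v|$. This is \emph{not} immediate from Newman's theorem alone, because $vB$ is merely \emph{some} subset of $v\Z\subseteq\Z$ complementing $C$ in $\Z$, and a priori its minimal period could be an integer not divisible by $|v|$ — but then $vB$ would contain elements outside $v\Z$, contradiction. The clean argument: $vB\subseteq v\Z$, so if $vB$ is periodic with period $m'$ then $v\Z$ (being a union of cosets of the subgroup generated by the period relation? — more carefully:) for any fixed $b\in B$, the set $vB$ contains $vb+m'\Z$ intersected appropriately; since $vb+m'\in vB\subseteq v\Z$ we get $m'\in v\Z$, done. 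So the obstacle dissolves once one records that the period of a periodic subset of $v\Z$ lies in $v\Z$. After that, everything is bookkeeping: assemble $C=\psi(A_1,\ldots,A_h)$ is finite (product of finite sets under a linear map), invoke Newman, divide by $|v|$, and rewrite the exponent.
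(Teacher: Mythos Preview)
Your reduction to Newman's theorem via $C\oplus vB=\Z$ (with $C=\psi(A_1,\ldots,A_h)$) is the right starting point, but the final arithmetic step is wrong: you write
\[
m=\frac{m'}{|v|}\le \frac{2^{\diam(C)}}{|v|}=2^{\diam(C)/|v|},
\]
and that last equality is simply false (e.g.\ $\diam(C)=4$, $|v|=2$ gives $2^4/2=8\ne 4=2^{4/2}$). So your argument only yields the bound $m\le 2^{\diam(C)}/|v|$, which is in general much weaker than the claimed $2^{\diam(C)/|v|}$. The parenthetical remark about ``using it as a real exponent'' does not rescue this; the two quantities are genuinely different.

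The fix is to push the division by $|v|$ \emph{inside} before invoking Newman. Since $vB\subseteq |v|\Z$, the identity $C\oplus vB=\Z$ splits over residue classes modulo $|v|$: for each $r\in\{0,1,\ldots,|v|-1\}$, setting $C_r=\{c\in C:c\equiv r\pmod{|v|}\}$ one gets $C_r\oplus vB=r+|v|\Z$, and hence (translating by $r$ and dividing through by $|v|$) the finite set $C'_r=(C_r-r)/|v|$ satisfies $C'_r\oplus(\pm B)=\Z$ with unique representation. Now Newman's theorem applied directly to $C'_r$ and $B$ gives that $B$ is periodic with period at most $2^{\diam(C'_r)}=2^{\diam(C_r)/|v|}\le 2^{\diam(C)/|v|}$, which is exactly the asserted bound. (The present survey states the theorem without proof, citing~\cite{nath08g}, so there is no in-text argument to compare against; but this residue-class refinement is what is needed to obtain the exponent $\diam(C)/|v|$ rather than merely dividing the Newman bound by $|v|$ afterward.)
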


Ljujic and Nathanson~\cite{nath08h} have extended Biro's cube root upper bound for the period of $m$ to complementing sets of integers with respect to a linear form.

Instead of considering only sets that produce a unique representation for every integer,
we can ask for any prescribed number of representations.  This suggests the following inverse problem for representation functions associated to linear forms:

\begin{pbm}
Let $\varphi(x_1,\ldots,x_h,y)=u_1x_1+\ldots+u_hx_h+vy$ be a linear form with integer coefficients, and let $\mathcal{A}=(A_1,\ldots A_h)$ be an $h$-tuple of finite sets of integers.  Given a function $f:\Z\rightarrow\N$, does there exist a set $B\subseteq\Z$ such
that $r_{\mathcal{A},B}^{(\varphi)}(n) =f(n)$ for all integers $n$?
\end{pbm}

We have the following compactness theorem.

\begin{thm}[Nathanson~\cite{nath08g}]
Let $\varphi(x_1,\ldots,x_h,y)=u_1x_1+\ldots+u_hx_h+vy$ be a linear form with integer coefficients, and let $\mathcal{A}=(A_1,\ldots A_h)$ be an $h$-tuple of finite sets of integers.   Consider a function $f:\Z\rightarrow\N$.
Suppose there exists a strictly increasing sequence $\{k_i\}_{i=1}^{\infty}$ of positive integers and a sequence 
$\{B_i\}_{i=1}^{\infty}$ of (not necessarily increasing) sets of integers such
that $r_{\mathcal{A},B_i}^{(\varphi)}(n)=f(n)$ for integers $n$ satisfying $|n|\leq k_i$. Then there exists an infinite set $B$ such that $r_{\mathcal{A},B}^{(\varphi)}(n)=f(n)$ for all integers $n$.
\end{thm}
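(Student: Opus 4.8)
The plan is a compactness (diagonalization) argument in the style of K\"onig's lemma; the one real idea is that the values of $r^{(\varphi)}_{\mathcal{A},B}(n)$ over a bounded range of $n$ depend only on a bounded piece of $B$. Write $\psi(x_1,\dots,x_h)=u_1x_1+\dots+u_hx_h$ and set $L=\max\{|\psi(a_1,\dots,a_h)| : a_i\in A_i \text{ for } i=1,\dots,h\}$, which is finite because each $A_i$ is finite. For each positive integer $k$ let
\[
J_k=\left\{ b\in\Z : |b|\le \frac{k+L}{|v|} \right\},
\]
a finite set of integers --- here it is essential that $v\neq 0$, which is the standing convention for the linear forms in this section. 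If $\varphi(a_1,\dots,a_h,b)=n$ with $|n|\le k$, then $vb=n-\psi(a_1,\dots,a_h)$, so $|vb|\le k+L$ and hence $b\in J_k$. Therefore, for every set $B\subseteq\Z$ and every integer $n$ with $|n|\le k$,
\[
r^{(\varphi)}_{\mathcal{A},B}(n)=r^{(\varphi)}_{\mathcal{A},\,B\cap J_k}(n);
\]
that is, the restriction of the representation function to $[-k,k]$ is determined by the single finite set $B\cap J_k$.

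Next I would run the diagonalization on the hypothesized sets $B_i$. Since $J_{k_1}$ is finite it has only finitely many subsets, so some set $S_1\subseteq J_{k_1}$ satisfies $B_i\cap J_{k_1}=S_1$ for all $i$ in an infinite set $I_1$ of indices. Since $J_{k_1}\subseteq J_{k_2}$ and $J_{k_2}$ is finite, some set $S_2\subseteq J_{k_2}$ with $S_2\cap J_{k_1}=S_1$ satisfies $B_i\cap J_{k_2}=S_2$ for all $i$ in an infinite set $I_2\subseteq I_1$. Iterating, I obtain infinite index sets $I_1\supseteq I_2\supseteq\cdots$ and finite sets $S_j\subseteq J_{k_j}$ with $S_{j+1}\cap J_{k_j}=S_j$, such that $B_i\cap J_{k_j}=S_j$ whenever $i\in I_j$. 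Put $B=\bigcup_{j\ge 1}S_j$; then $B\cap J_{k_j}=S_j$ for every $j$, because the sets $S_j$ are nested with respect to the $J_{k_j}$.

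Then I would check the conclusion. Fix $n\in\Z$ and choose $j$ with $k_j\ge|n|$, which is possible because $\{k_i\}$ is strictly increasing. Pick any $i\in I_j$ with $i\ge j$, so that $k_i\ge k_j\ge|n|$; the hypothesis then gives $r^{(\varphi)}_{\mathcal{A},B_i}(n)=f(n)$. Since $B\cap J_{k_j}=S_j=B_i\cap J_{k_j}$, the displayed identity of the first paragraph gives $r^{(\varphi)}_{\mathcal{A},B}(n)=r^{(\varphi)}_{\mathcal{A},B_i}(n)=f(n)$. Finally $B$ is infinite: $f$ takes values in $\N$, so $r^{(\varphi)}_{\mathcal{A},B}(n)\ge 1$ for every $n$, which forces some $b\in B$ with $|b|\ge(|n|-L)/|v|$; letting $|n|\to\infty$ shows that $B$ is unbounded.

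The argument presents no essential obstacle; the only step needing care is the first paragraph, where one must be sure that elements of $B$ lying outside $J_k$ cannot contribute to the representation of any $n$ with $|n|\le k$ --- and that is precisely the place where the nonvanishing of $v$ is used. The rest is the familiar diagonal extraction, which works regardless of whether the original sequence $\{B_i\}$ is increasing.
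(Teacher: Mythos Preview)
Your argument is correct: the locality observation that $r^{(\varphi)}_{\mathcal{A},B}(n)$ for $|n|\le k$ depends only on $B\cap J_k$, followed by a pigeonhole/diagonal extraction on the finite windows $J_{k_j}$, is exactly the right mechanism, and your check that the coherent family $\{S_j\}$ yields $B\cap J_{k_j}=S_j$ is clean. The infinitude of $B$ follows as you say from $f(\Z)\subseteq\N$.

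As for comparison, this survey paper merely states the theorem and cites \cite{nath08g} for the proof; no argument is given here. Your compactness/K\"onig-type extraction is the standard approach and is what one expects the cited source to do, so there is nothing substantively different to report.
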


\section{The Caccetta-H\" aggkvist conjecture}

Let $G=G(V,E)$ be a finite directed graph with vertex set $V$ and edge set $E$.  Let $n = |V|.$  Every edge $e\in E$ is an ordered pair $(v,v')$ of vertices.  The vertex $v$ is called the \emph{tail} of $e$ and the vertex $v'$ is called the \emph{head} of $e$.  An edge of the form $(v,v)$ is called a \emph{loop}.  We consider only graphs that may have loops, but that do not have multiple edges.  A \emph{path of length $r$} in the graph $G$ is a finite sequence of edges 
$e_1,e_2,\ldots,e_r,$ where $e_i = (v_i,v'_i)$ for $i=1,\ldots, r,$ and $v'_{i}=v_{i+1}$ for $i=1,\ldots, r-1.$   The path is called a \emph{circuit}  if $v'_r=v_1.$  A circuit of length 1 is a loop, a circuit of length 2 is called a \emph{digon}, and a circuit of length 3 is called a \emph{triangle.}

The \emph{outdegree} of a vertex $v$, denoted $\mbox{outdegree}(v),$ is the number of edges $e\in E$ whose tail is $v$.  If $|V| = n$ and $\mbox{outdegree}(v)\geq 1$ for every vertex $v\in V$, then the graph $G$ contains a circuit of length at most $n$.   If $|V| = n$ and $\mbox{outdegree}(v)\geq 2$ for every vertex $v\in V$, then it is known that the graph $G$ contains a circuit of length at most $n/2$.

\begin{conj}[Caccetta-H\" aggkvist]
Let $k \geq 3.$  If $\mbox{outdegree}(v)\geq k$ for every vertex $v\in V$, then the graph $G$ contains a circuit of length at most $n/k$.
Equivalently, if $outdeg(v)\geq n/k$ for every vertex $v\in V$, then the graph $G$ contains a circuit
of length at most $k.$
\end{conj}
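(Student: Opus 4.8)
The plan is to prove the equivalent formulation: assuming $\mathrm{outdeg}(v)\ge n/k$ for every vertex $v$, to exhibit a circuit of length at most $k$. Since a loop is a circuit of length $1$ and a digon a circuit of length $2$, we may assume $n\ge 2k$ and that $G$ is an oriented graph (no loops, no digons), as otherwise a short circuit is already present. The first thing I would examine is the Cayley (circulant) special case, where the problem meets the additive themes of this survey. If $V=\Z/n\Z$ and $(i,j)\in E$ precisely when $j-i\in S$ for a fixed connection set $S\subseteq(\Z/n\Z)\setminus\{0\}$, then every outdegree equals $|S|$, and for $k=3$ a direct check shows $G$ is triangle-free \emph{if and only if} $0\notin S+S+S$; the conjecture then asserts that $|S|>n/3$ forces $0\in S+S+S$. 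This is accessible to classical additive combinatorics: if $0\notin S+S+S$ then $(S+S)\cap(-S)=\emptyset$, so $|S+S|\le n-|S|<2n/3$, contradicting the Kneser bound $|S+S|\ge 2|S|-|H|$ for the stabilizer $H$ of $S+S$, the case of nontrivial $H$ being handled by passing to the quotient $(\Z/n\Z)/H$. Thus the vertex-transitive case is essentially understood, and the crux of the full conjecture is precisely that a general digraph carries no such group structure; no reduction of the conjecture to the vertex-transitive case is known.

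For the triangle case $k=3$ in general I would pursue the spectral/trace method. Let $M$ be the adjacency matrix of $G$ (entries in $\{0,1\}$). In an oriented graph $\mathrm{tr}\,M$, $\mathrm{tr}\,M^2$, and $\mathrm{tr}\,M^3$ count, up to positive constants, the loops, digons, and directed triangles of $G$, so triangle-freeness is the system $\mathrm{tr}\,M=\mathrm{tr}\,M^2=\mathrm{tr}\,M^3=0$; equivalently the complex eigenvalues satisfy $\sum_i\lambda_i=\sum_i\lambda_i^2=\sum_i\lambda_i^3=0$. Meanwhile $M$ is entrywise nonnegative with every row sum at least $n/3$, so by Perron--Frobenius its spectral radius satisfies $\rho(M)\ge n/3$, while $\|M\|_F^2=\sum_v\mathrm{outdeg}(v)\ge n^2/3$. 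One wants to argue that an eigenvalue of modulus $\ge n/3$ cannot be cancelled by the other $n-1$ eigenvalues in all three power sums simultaneously under these norm constraints, combining the vanishing power sums with majorization and interlacing inequalities. The obstruction is that $M$ is not symmetric, so triangle-freeness is not a single symmetric trace condition; replacing $M$ by $M+M^{\top}$ or by its singular values reintroduces the $2$-paths that fail to close, and every spectral attack so far has stalled strictly short of the threshold $1/3$ --- the current record, $\mathrm{outdeg}(v)\ge 0.3465\,n$, coming from flag-algebra semidefinite certificates, is of this flavour.

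The real obstacle lies in the sparse, irregular regime, where a short circuit is forced by local expansion rather than global density. The Chv\'atal--Szemer\'edi and Shen arguments already give a circuit of length at most $n/k+O(1)$ by a greedy breadth-first layering: iterated out-neighbourhoods expand, and if no short circuit occurs the layers stay pairwise disjoint until the $n$ vertices run out, the additive constant being lost exactly at the boundary layers where expansion can momentarily stall. Removing that constant would require showing the expansion is \emph{never} wasteful --- in effect an exact structural theorem: classify the oriented graphs with girth greater than $k$ and largest possible minimum outdegree, show this maximum is about $n/(k+1)$ (attained by the balanced blow-ups of the directed cycle $C_{k+1}$, plus a handful of sporadic examples), hence comfortably below $n/k$, and bootstrap via a stability theorem controlling all digraphs with girth $>k$ and nearly-extremal minimum outdegree. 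I do not see how to carry out this stability analysis, and I would be surprised if it reduced to routine computation: the persistent difficulty of Caccetta--H\"aggkvist is precisely that the local obstruction (expansion) and the global obstruction (density) have never been fused into a single argument. A realistic intermediate target is to push the triangle constant strictly below $1/3$ by feeding the Cayley reduction into the flag-algebra framework; a complete proof is, at present, out of reach.
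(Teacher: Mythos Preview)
The statement you are asked to prove is labelled a \emph{conjecture} in the paper, and the paper supplies no proof; on the contrary, it says explicitly that ``even the case $k=3$ of the Caccetta--H\"aggkvist [conjecture] is open'' and calls it ``a fundamental unsolved problem in graph theory.''  There is therefore nothing in the paper to compare your attempt against.

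Your proposal is not a proof either, and to your credit you say so in the final sentence.  What you have written is a competent survey of partial approaches: the vertex-transitive/Cayley case via Kneser--Kemperman type additive arguments (which matches the one positive result the paper does cite, Hamidoune's theorem), the spectral/trace heuristic for $k=3$, and the Chv\'atal--Szemer\'edi layering argument giving a circuit of length $n/k+O(1)$.  All of this is reasonable commentary on an open problem, but none of it closes the gap, as you yourself note.  The only concrete error is in the closing suggestion: pushing the triangle constant ``strictly below $1/3$'' would \emph{prove} the $k=3$ case, not be an intermediate target --- the current records you mention are strictly \emph{above} $1/3$.

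In short: there is no gap to name because there is no proof on either side; the statement remains open, and both the paper and your proposal treat it as such.
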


Even the case $k=3$ of the Caccetta-H\" aggkvist is open:  If $G$ is a graph with $n$ vertices and if every vertex is the tail of at least $n/3$ edges, prove that the graph contains a loop, a digon, or a directed triangle.  This is a fundamental unsolved problem in graph theory.

\begin{defi}
Let $\Gamma$ be a finite group and let $X\subseteq\Gamma$.  The
Cayley graph $G(V,E)$ is the graph with vertex set $V= \Gamma$ and edge set  $E:=\{(\gamma, \gamma x) : \gamma\in\Gamma, x\in \X\}$.
\end{defi}

\begin{thm}[Hamidoune~\cite{hami81a,hami87a}]
The Caccetta-H\" aggkvist conjecture is true for all Cayley graphs and for all vertex-transitive graphs..
\end{thm}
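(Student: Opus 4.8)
Assume $k\ge 3$, as in the statement of the conjecture (the cases $k=1,2$ are classical and hold for all digraphs). The plan is to reduce the vertex-transitive case to a statement about the growth of out-balls in a coset graph, and then to prove that statement by a Kneser-type isoperimetric inequality for finite groups; Cayley graphs are the special case of trivial point stabilizer. First I would make two reductions. If $G$ is not strongly connected, pass to a terminal strong component $C$ (one with no outgoing edges): every vertex of $C$ still has outdegree at least $k$ inside $C$, and since $\mathrm{Aut}(G)$ is vertex-transitive its setwise stabilizer of $C$ still acts transitively on $C$, so $C$ is vertex-transitive with $|C|\le n$, and a circuit of length at most $\lceil |C|/k\rceil\le\lceil n/k\rceil$ in $C$ is a circuit in $G$. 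Second, writing $\Gamma\le\mathrm{Aut}(G)$ for a transitive subgroup with point stabilizer $H$, identify $V=\Gamma/H$; the out-neighbourhood of the base point $o=H$ is encoded by a set $S\subseteq\Gamma$ that is a union of $k$ left cosets of $H$ with $HSH=S$, and $\langle S\rangle H=\Gamma$ by strong connectivity. One checks $HS=SH=S$, so every $S^r$ is a union of left $H$-cosets, and a circuit of length $r$ through $o$ exists if and only if $H\subseteq S^r$.

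Now the main line. Put $B_r=H\cup S\cup\cdots\cup S^r$ (the out-ball of radius $r$ about $o$) and $b_r=|B_r|/|H|$; then $b_0=1$, a loop at $o$ is a circuit of length $1$ so we may assume $b_1=k+1$, and since $B_{r+1}=B_r\cup S^{r+1}$ the increment $b_{r+1}-b_r$ is the number of $H$-cosets lying in $S^{r+1}$ but not in $B_r$. The crucial claim is a ball-growth dichotomy: for every $r\ge 1$ with $B_r\neq\Gamma$, either $b_{r+1}\ge b_r+k$, or $G$ already has a circuit of length at most $r+1$. Granting this, look at the first $r$ at which growth fails (if any); growth was at least $k$ at all earlier steps, so $b_r\ge rk+1$, and $B_r\neq\Gamma$ forces $rk+1\le n-1$, i.e. $r\le(n-2)/k$, and since $r$ is an integer this gives $r+1\le\lceil n/k\rceil$, yielding a short enough circuit. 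If growth never fails, let $\rho$ be the out-radius; then $n=b_\rho\ge b_1+(\rho-1)k=\rho k+1$, so $\rho\le\lfloor(n-1)/k\rfloor$, and taking any edge $o\to w$ together with a shortest path from $w$ back to $o$ --- of length at most the out-eccentricity of $w$, which equals $\rho$ by vertex-transitivity --- we get a closed walk of length at most $\rho+1\le\lceil n/k\rceil$ through $o$, hence a circuit of that length. In all cases $G$ has a circuit of length at most $\lceil n/k\rceil$.

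The heart of the proof --- and the step I expect to be the main obstacle --- is the ball-growth dichotomy. If $H\subseteq S^{r+1}$ we are in the second alternative, so suppose $S^{r+1}=S\cdot S^r\neq\Gamma$. A Kneser/Cauchy--Davenport-type inequality in $\Gamma$ yields a subgroup $K\ge H$ stabilizing $S^{r+1}$ with $|S^{r+1}|\ge|SK|+|S^rK|-|K|$; but the cardinality bound alone is too weak when $K$ is large, so one must exploit the accompanying periodicity of $S^{r+1},S^r,\dots$ modulo $K$, which --- using that $H\subseteq B_r$ while $H\not\subseteq S^{r+1}$, and the running estimates $b_j\ge jk+1$ --- forces at least $k$ new $H$-cosets in $S^{r+1}\setminus B_r$ unless the modular structure itself produces a short circuit. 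For abelian $\Gamma$ this is essentially Kneser's theorem; for general $\Gamma$ (needed already for non-abelian Cayley graphs, and for the double-coset bookkeeping when $G$ is vertex-transitive but not a Cayley graph) one invokes Hamidoune's isoperimetric method --- fragments and atoms --- to produce $K$, to classify the slow-growth configurations, and to exhibit the short circuit in each. Making this inequality work in the coset-graph setting, and checking that no slow-growth configuration survives without a short circuit, is the crux; everything else is the bookkeeping above.
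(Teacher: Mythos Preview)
The paper does not actually give a proof of this theorem: it merely states Hamidoune's result and adds the single remark that ``one proof of this result uses a theorem of Kemperman in additive number theory,'' with a pointer to the exposition in \cite{nath08e}. So there is no detailed argument against which to check your proposal; there is only the hint that the key input is Kemperman's product-set theorem in (not necessarily abelian) groups.

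Your outline is entirely compatible with that hint. The ball-growth strategy you describe --- show that out-balls $B_r$ around a base vertex grow by at least $k$ cosets per step unless a short circuit already exists, and then bound the radius arithmetically --- is exactly the standard skeleton, and the numerical bookkeeping you give (the two cases ``growth eventually fails'' versus ``growth never fails'') is correct. The one substantive difference from the paper's hint is the tool you name for the crucial step: you reach for Kneser's theorem in the abelian case and Hamidoune's isoperimetric method (atoms and fragments) in general, whereas the paper points to Kemperman's theorem. These are close relatives --- Kemperman's 1956 result is precisely the non-abelian structural analogue of Kneser that gives the stabilizer subgroup $K$ and the inequality $|S\cdot S^r|\ge|SK|+|S^rK|-|K|$ you want --- so the approaches converge, but invoking ``Hamidoune's isoperimetric method'' to prove Hamidoune's theorem is a bit circular unless you intend to redevelop that machinery from scratch.

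You are candid that the ball-growth dichotomy is the crux and is not yet proved in your sketch; that honesty is appropriate, because the passage from $|S^{r+1}|\ge|SK|+|S^rK|-|K|$ to ``at least $k$ \emph{new} $H$-cosets appear in $S^{r+1}\setminus B_r$, or else there is a circuit of length $\le r+1$'' genuinely requires the Kemperman structural analysis (or an equivalent), and the double-coset bookkeeping for the vertex-transitive (non-Cayley) case is delicate. That is exactly where the real content lies, and it is what the paper defers to the references.
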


One proof of this result uses a theorem of Kemperman~\cite{kemp56}  in additive number theory.  An exposition of this and other related results appears in Nathanson~\cite{nath08e}.

\def\cprime{$'$} \def\cprime{$'$} \def\cprime{$'$}
\providecommand{\bysame}{\leavevmode\hbox to3em{\hrulefill}\thinspace}
\providecommand{\MR}{\relax\ifhmode\unskip\space\fi MR }
\providecommand{\MRhref}[2]{%
  \href{http://www.ams.org/mathscinet-getitem?mr=#1}{#2}
}
\providecommand{\href}[2]{#2}

\end{document}